\documentclass{article}

%---- packages ------------

\usepackage{amssymb,amsfonts,amsthm,amsmath,dsfont} 
\usepackage{fullpage}
\usepackage{graphicx}
\usepackage[usenames,dvipsnames,svgnames,table]{xcolor}
\usepackage[utf8]{inputenc}
\usepackage{esint} % pour les intégrales doubles
\usepackage[colorlinks=true, pdfstartview=FitV,linkcolor=ForestGreen,citecolor=ForestGreen, urlcolor=blue]{hyperref}
\usepackage[shortlabels]{enumitem}

%---- commandes ---------

\newcommand{\R}{\mathbb{R}}

\newcommand{\eps}{\varepsilon}

\newcommand{\cR}{\mathcal{R}}
\newcommand{\cQ}{\mathcal{Q}}

\newcommand{\cG}{\mathcal{G}}

\newcommand{\ind}{\mathds{1}}
\newcommand{\dt}{\, \mathrm{d} t}
\newcommand{\dx}{\, \mathrm{d} x}

\newcommand{\Ccfl}{C_{\mathrm{CFL}}}

\DeclareMathOperator{\sgn}{sgn}
\DeclareMathOperator{\BV}{BV}

%---- Environnements ------

\newtheorem{theo}{Theorem}[section]
\newtheorem{lem}[theo]{Lemma}
\newtheorem{pro}[theo]{Proposition}

\newtheorem{defi}[theo]{Definition}
\theoremstyle{remark}
\newtheorem{rem}[theo]{Remark}

\numberwithin{equation}{section}

\begin{document}

\title{\bf Germs for scalar conservation laws: \\
  the Hamilton-Jacobi equation point of view}
\author{Nicolas Forcadel, Cyril Imbert et Régis Monneau}
\date{\today \\[2ex] \textit{Version 6}}

\maketitle

\begin{abstract}
  We prove that the entropy solution to a scalar conservation law posed on the real line with a  flux that is discontinuous
  at one point (in the space variable) coincides with the derivative of the solution to a Hamilton-Jacobi (HJ) equation whose Hamiltonian is discontinuous.
  Flux functions (Hamiltonians) are not assumed to be convex in the state (gradient) variable. The proof consists in proving the
  convergence of two numerical schemes.  We rely on the theory developed
  by B.~Andreianov, K.~H.~Karlsen and N.~H.~Risebro (\textit{Arch. Ration. Mech. Anal.}, 2011) for such scalar conservation laws
  and on the viscosity solution theory developed by the authors (\textit{arxiv}, 2023) for the corresponding HJ equation.
  This study allows us to
  characterise certain germs introduced in the AKR theory (namely maximal and complete ones) and relaxation operators introduced in the viscosity solution framework.
\end{abstract}

\setcounter{tocdepth}{1}
\tableofcontents

%%%%%%%%%%%%%%%%%%%%%%%%%%%%%%%%%%%%%%%%%%%%%%%%%%%%%%%%%%%%%% 

\bigskip

\section{Introduction}

  We are interested in scalar conservation laws (SCL) with discontinuous
  flux posed on the real line. The discontinuity arises at the origin in space variable, with a flux on the left and a possible different flux on the right.
  As far as entropy solutions for such equations are concerned,
  we adopt here the (AKR) point of view of B.~Andreianov, K.~H.~Karlsen and
  N.~H.~Risebro \cite{MR2807133}.  The
  condition imposed to the entropy solution at the discontinuity
  concerns its strong traces from the right and the left. They are
  imposed to belong to a set that is called a \emph{germ}.
  Following the AKR theory, uniqueness of the solution is known for maximal germs (in the sense
  of inclusion), and existence of the solution is known  for complete germs (for which the Riemann problem can be solved).

   We can assert at least formally that the solution of a SCL
  is the derivative of the solution of a Hamilton-Jacobi (HJ) equation whose Hamiltonian
  coincides with the flux function of the conservation law. In our
  framework, this   Hamiltonian is thus discontinuous (in the spatial variable).
  In the viscosity solution theory developed for the corresponding HJ equations,  conditions imposed at the discontinuity 
  are in correspondance with a family of monotone nonlinearities. The relaxation of such nonlinearities creates naturally some $\mathcal G$-Godunov fluxes for a certain germ $\mathcal G$.

  When flux functions / Hamiltonians are convex in
  the state / gradient variable, it was recently proved by
  P.~Cardaliaguet, T.~Girard and the first and third authors
  \cite{cardaliaguet2023conservation} that the spatial derivative of the viscosity
  solution is the entropy solution for a germ associated with the
  boundary nonlinearity. We prove that this result still holds true
  for general Hamiltonians, not necessarily convex, but coercive. The problem is
  significantly more difficult because conditions at the discontinuity
  (for both equations) are much richer.
  Indeed, the proof of the
  result consists in approximating solutions of the two equations by
  numerical schemes and in proving their convergence. The difficulty
  lies in identifying the germ selected by the numerical scheme
  associated with the SCL.

  It is one of the main contributions of this work to show that the desired condition at the
  discontinuity of
  the limit of the SCL numerical scheme  is necessarily relaxed. 
  Such a phenomena was  exhibited by  the authors at the level
  of  HJ equations \cite{MR3621434,forcadel2023nonconvex}. Its understanding is used to address
  the relaxation of the condition at the discontinuity for the SCL.

 Another contribution of this work is to show that the germs selected by this approximation procedure
coincide with maximal  (in the sense of inclusion) and complete  (for which the Riemann
problem can be solved) ones.
   The derivation of this formula is based on a Hamilton-Jacobi point of view on the problem.

\subsection{Scalar conservation laws and Hamilton-Jacobi equations}

In this article, we consider  a scalar conservation law of the form
\begin{equation}\label{eq:scl}
\begin{cases}
v_t + H_L (v)_x=0,  & t>0,x<0, \\
v_t + H_R (v)_x = 0, & t>0, x>0, \\
(v(t,0-),v(t,0+)) \in \mathcal{G}, & t>0   
\end{cases}
\end{equation} 
where $H_L,H_R \colon \R \to \R$ satisfy,
\begin{equation}
  \label{e:assum-H}
  \begin{cases}
  H_\alpha \text{ are $\mathfrak{L}_\alpha$-Lipschitz continuous},\\  
 H_\alpha \text{ is not constant on any open interval},\\
 H_\alpha (p_\alpha) \to + \infty \quad \text{ as } |p_\alpha| \to + \infty 
\end{cases}
\end{equation}
with $\alpha \in \{L,R\}$.

The condition at $x=0$ for entropy solutions that we will work with in this article
was  introduced  by B. Andreianov, K.~H.~Karlsen and N.~H.~Risebro in \cite{MR2807133}.
It is necessary to supplement the equation with a condition at $x=0$ (even if $H_L = H_R$).
This condition amounts to impose that the couple of traces $(v(t,0+),v(t,0-))$ lie in a given set  $\mathcal{G}$, called the \emph{germ}. 

In this work, we make precise the link between such entropy solutions of \eqref{eq:scl} associated to a germ $\mathcal{G}$ and
viscosity solutions of the following Hamilton-Jacobi equation,
\begin{equation}\label{eq:hj}
\begin{cases}
  u_t + H_L(u_x)=0, & t>0,x<0, \\
  u_t + H_R(u_x)=0, & t>0, x>0, \\
u_t + F_0(u_x (t,0-), u_x (t,0+))=0, & t>0, x=0.
\end{cases}
\end{equation}
We assume that the function $F_0$ satisfies
\begin{equation}
  \label{e:assum-F}
  \begin{cases}
F_0 \text{ is $\mathfrak{L}_0$-Lipschitz continuous,} \\
   p_L \mapsto F_0(p_L,p_R) \text{ is non-decreasing,} \\
   p_R \mapsto F_0(p_L,p_R) \text{ is non-increasing,}\\
  F_0 (p) \to +\infty \text{ as } (p_L)_+ + (p_R)_- \to +\infty
\end{cases}
\end{equation}
with $(p_\alpha)_- = \max (0,-p_\alpha)$. 

Both equations are supplemented with an initial condition,
\begin{eqnarray}
 \label{eq:ic-hj} u &=u_0 \quad \text{ in }\quad \left\{0\right\}\times \R , \\
                      \label{eq:ic-scl}    v &= v_0 \quad \text{ in }\quad \left\{0\right\}\times \R
\end{eqnarray}
with  $v_0 = (u_0)_x \in L^\infty \cap \BV (\R)$.\\
In short, we say that {$v$} is a $\mathcal G$-entropy solution to \eqref{eq:scl}, while {$u$} is a $F_0$-viscosity solution to \eqref{eq:hj}.

\subsection{Numerical schemes}

We now describe the numerical scheme used to solve the Hamilton-Jacobi equation \eqref{eq:hj}. Given a time step $\Delta t>0$ and a space step $\Delta x>0$, we consider the discrete time $t_n=n\Delta t$ for $n\in \mathbb N$ and the discrete point $x_{j}=j\Delta x$ for $j\in \mathbb Z$. We denote by $u^n_j$ the numerical approximation of $u(t_n,x_j)$. In order to discretize \eqref{eq:hj}, we will use a Godunov approximation. More precisely, we introduce the following Godunov numerical Hamiltonians, for $\alpha=L,R$
\begin{equation}\label{eq::C16}
g^{H_\alpha}(p^-,p^+)=\begin{cases}
\displaystyle \min_{p\in [p^-,p^+]}H_\alpha(p) & \text{ if }  p^-\le p^+,\\
\displaystyle \max_{p\in [p^+,p^-]}H_\alpha(p)& \text{ if } p^+\le p^-.
\end{cases}
\end{equation}
We remark that the functions $g^{H_\alpha}$ are  non-decreasing in the first variable and non-increasing in the second one. Moreover, $g^{H_\alpha}(p,p)=H_\alpha(p)$ for $\alpha=R,L$.
Given $n \ge 1$, we define for all $j\in \mathbb Z$, 
\begin{equation}\label{eq:DefvDelta}
v^n_{j+\frac 12}=\frac{u^n_{j+1}-u^n_{j}}{\Delta x}.
\end{equation}
The numerical scheme is then given by
\begin{equation}\label{eq:scheme-HJ}
\begin{cases}
\displaystyle{\frac {u^{n+1}_{j}-u^n_{j}}{\Delta t}+g^{H_L}  \left( v^n_{j-\frac 12},v^n_{j+\frac 12} \right)=0}& \text{ for } j\le -1,\\[2ex]
\displaystyle{\frac {u^{n+1}_{j}-u^n_{j}}{\Delta t}+g^{H_R}  \left( v^n_{j-\frac 12},v^n_{j+\frac 12} \right)=0}&\text{ for } j\ge 1,\\[2ex]
\displaystyle{\frac {u^{n+1}_{j}-u^n_{j}}{\Delta t}+F_0  \left(v^n_{j-\frac 12}, v^n_{j+\frac 12} \right)=0}&\text{ for } j=0
\end{cases}
\end{equation}
completed with the initial condition
\[u^0_j=u_0(j\Delta x)\quad {\rm for}\; j\in \mathbb Z.\]

Given $u_0$, we consider $v_0 :=  (u_0)_x$ and its discretized version
\begin{equation}\label{e:v0-discrete}
  v^0_{j+\frac12}= \frac{u_{j+1}^0 - u_j^0}{\Delta x} = \frac{u_0(x_{j+1}) - u_0(x_{j}) }{\Delta x} = \frac{1}{\Delta x}\int_{x_{j}}^{x_{j+1}} v_0(y) \,\mathrm{d} y.
\end{equation}
The scheme for \eqref{eq:scl} is directly derived from the scheme \eqref{eq:scheme-HJ}. Indeed, recalling the definition of $v^n_{j+1/2}$ in \eqref{eq:DefvDelta}, we can write
\begin{equation} \label{eq:scheme-SCL}
  v^{n+1}_{j+\frac 12}=v^n_{j+\frac 12}-\frac{\Delta t}{\Delta x}
  \left(f_{j+1}(v^n_{j+\frac 12},v^n_{j+\frac 32})-f_j(v^n_{j-\frac 12}, v^n_{j+\frac 12})\right)
  \end{equation}
where
\begin{equation}
  \label{e:def-fj}
  f_j (a,b) = \begin{cases}
                g^{H_L} (a,b) & \text{ if } j \le -1 \\
                g^{H_R}(a,b) & \text{ if } j \ge 1 \\
                F_0 (a,b) & \text{ if } j =0.
              \end{cases}
\end{equation}
             
It is convenient to introduce the functions $\mathcal{F}_j$ from the right-hand side of the above scheme: for any $n,j$, we have
\begin{equation}
  \label{e:monotone-scheme}
  v^{n+1}_{j+\frac12} = \mathcal{F}_j(v^n_{j-\frac12},v^n_{j+\frac12},v^n_{j+\frac32}).
\end{equation}
We shed light on the fact that $\mathcal{F}_j(v^n_{j-\frac12},v^n_{j+\frac12},v^n_{j+\frac32})$ is monotone with respect to
$v^n_{j-\frac12}$ and $v^n_{j+\frac32}$ only if $F_0(a,b)$ satisfies the monotonicity properties
given in \eqref{e:assum-F}. A condition of CFL type ensures that it is monotone with respect to $v^n_{j+\frac12}$, 
\begin{equation}
  \label{e:CFL}
\frac{\Delta t}{\Delta x} \le  (2 \mathfrak{L})^{-1} =: \Ccfl
\end{equation}
with
\[\mathfrak{L} = \max_{\alpha \in \{0,L,R\}} \mathfrak{L}_\alpha.\]
We recall that for $\alpha \in \{L,R\}$, $\mathfrak{L}_\alpha$ denotes the Lipschitz constant of $H_\alpha$  (and $g^{H_\alpha}$ in both variables) and $\mathfrak{L}_0$ denotes the Lipschitz constant of $F_0$ (in both variables). 

It is convenient to define a function $u_\Delta$ in continuous variables $(t,x) \in (0,+\infty) \times \R$ by linear interpolation: for $t>0$ and $x \in \R$,
\begin{equation}
  \label{e:u-delta}
  u_{\Delta}(t,x) := \sum_{n \in \mathbb{N}} \ind_{[t_n, t_{n+1})}(t) \ind_{[x_j, x_{j+1})}(x) \left[ u_j^n + \frac{u_{j+1}^{n} - u_j^n}{\Delta x} (x-x_{j}) \right].
\end{equation}
where  $\Delta$ stands for $(\Delta t,\Delta x)$. 
Similarly, we define $v_\Delta$ on $(0,+\infty) \times \R$ by,
\begin{equation}\label{e:v-delta}
   v_{\Delta} := \sum_{n\in \mathbb{N}} \sum_{j\in\mathbb{Z}} v_{j+\frac12}^n \ind_{[t_n, t_{n+1})\times [x_j, x_{j+1})} .
\end{equation}

\subsection{Main results}

The first main result of this article asserts that the spatial derivative of the
viscosity solution of the Hamilton-Jacobi (HJ) equation coincides with the entropy solution
of the corresponding scalar conservation law (SCL).
More precisely, if at the HJ level, the junction condition is encoded by the nonlinearity $F_0$, then at the SCL level, the associated germ is $\mathcal G_{\mathcal R F_0}$ that is represented by the ``relaxed'' nonlinearity $\cR F_0$.
 The definition of the relaxation operator is recalled
in the next section. 
%--------------------------------------------------------------------
\begin{theo}[Link between HJ and SCL] \label{t:link}
Let $u_0$ be Lipschitz continuous and $v_0 =(u_0)_x$ be of bounded variation in $\R$. 
Let $u \colon (0,+\infty) \times \R \to \R$ be the unique $F_0$-viscosity solution of
\eqref{eq:hj}-\eqref{eq:ic-hj} and $v$ be the unique $\mathcal G$-entropy solution of \eqref{eq:scl},\eqref{eq:ic-scl}
with the germ $\mathcal G:=\mathcal G_{F}$ defined by
\begin{equation}\label{eq::C12}
\cG_{F} = \{ (p_L,p_R) \in \R^2: H_L (p_L) = H_R (p_R) = F (p_L,p_R) \}
\end{equation}
  where $F:=\cR F_0$ (defined in \eqref{eq::C1}) is the relaxed junction condition at $x=0$.
  Then $v=u_x$ in $L^\infty$. 
\end{theo}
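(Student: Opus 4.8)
The strategy is to use the two numerical schemes \eqref{eq:scheme-HJ} and \eqref{eq:scheme-SCL} as a common bridge between the two equations, and to pass to the limit $\Delta \to 0$ on both sides simultaneously. First I would fix the CFL condition \eqref{e:CFL} so that the SCL scheme \eqref{e:monotone-scheme} is a monotone scheme; together with $v_0 \in L^\infty \cap \BV(\R)$ this yields the standard a priori bounds: an $L^\infty$ bound and a $\BV$ bound (in space, and then in time via the scheme) on $v_\Delta$, uniform in $\Delta$. By Helly's theorem $v_\Delta$ converges (up to a subsequence) in $L^1_{loc}$ and a.e. to some limit $\bar v$. Correspondingly, since $u_\Delta$ is obtained by integrating $v_\Delta$ in space and has uniformly Lipschitz bounds in $(t,x)$ (the $x$-Lipschitz bound is the $L^\infty$ bound on $v_\Delta$, the $t$-Lipschitz bound comes from the scheme and coercivity), Arzelà–Ascoli gives local uniform convergence of $u_\Delta$ to some $\bar u$ along the same subsequence, with $\bar u_x = \bar v$ and $\bar u(0,\cdot)=u_0$, $\bar v(0,\cdot)=v_0$.

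The heart of the argument is to identify the two limits. On the Hamilton-Jacobi side, I would invoke the convergence theory for the Godunov scheme \eqref{eq:scheme-HJ} developed by the authors (the \emph{arxiv 2023} reference): the scheme is monotone and consistent with \eqref{eq:hj}, so $\bar u$ is \emph{a} viscosity solution of the HJ equation. The subtlety — and this is where the relaxation operator $\cR$ enters — is that the junction condition satisfied in the limit is not $F_0$ itself but its relaxation $F = \cR F_0$; this is precisely the phenomenon identified in \cite{MR3621434,forcadel2023nonconvex}, where the flux/Hamiltonian is nonconvex and the discrete junction condition "relaxes" in the limit. So $\bar u$ is the $F_0$-viscosity solution (equivalently, the $(\cR F_0)$-viscosity solution, since relaxation does not change the solution set) of \eqref{eq:hj}-\eqref{eq:ic-hj}, which is unique; hence $\bar u = u$ and the whole sequence converges. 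On the SCL side, away from $x=0$ the monotone scheme \eqref{eq:scheme-SCL} is the classical Godunov scheme for $v_t + H_\alpha(v)_x = 0$, so $\bar v$ is a Kruzhkov entropy solution on each half-line; the remaining task is to show that the pair of traces $(\bar v(t,0-),\bar v(t,0+))$ lies in the germ $\cG_F$ with $F = \cR F_0$. Granting this, the AKR theory \cite{MR2807133} (using that $\cG_F$ is maximal and complete, which is established elsewhere in the paper) gives uniqueness of the $\cG_F$-entropy solution, so $\bar v = v$ and again the full sequence converges. Combining, $v = \bar v = \bar u_x = u_x$ in $L^\infty$.

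The main obstacle — and the announced main contribution of the paper — is the trace identification at $x=0$: showing that the limit $\bar v$ of the SCL scheme satisfies the \emph{relaxed} junction condition, i.e. $(\bar v(t,0-),\bar v(t,0+)) \in \cG_{\cR F_0}$ rather than merely $\cG_{F_0}$. The difficulty is that the discrete junction value $v_0^{n}=v^n_{1/2}$ (and $v^n_{-1/2}$) is governed by the single cell update at $j=0$, and its limiting behaviour cannot be read off naively from consistency with $F_0$: boundary layers in the scheme can make the effective condition strictly larger than $\cG_{F_0}$. The plan to handle this is to transfer the problem to the HJ level, where the relaxation operator $\cR$ and the corresponding convergence analysis are already understood, exploit that $\bar u_x = \bar v$, and use the characterization (from the HJ side) that $\bar u$ solves \eqref{eq:hj} with the relaxed nonlinearity $\cR F_0$; then relate the HJ junction condition for $\cR F_0$ to the germ $\cG_{\cR F_0}$ via the defining relation \eqref{eq::C12}, using that a function which is a viscosity solution of the HJ equation with a given monotone junction nonlinearity has a spatial derivative whose traces lie in the associated germ. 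This last implication — matching "viscosity solution with nonlinearity $F$" to "entropy solution with germ $\cG_F$" at the level of traces — is exactly the general (nonconvex) extension of the result of \cite{cardaliaguet2023conservation}, and carrying it out rigorously, including the identification of $\cG_{\cR F_0}$ as maximal and complete, is the technical core of the proof.
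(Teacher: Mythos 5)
Your overall architecture (CFL, $L^\infty$ and BV compactness for $v_\Delta$, uniform Lipschitz bounds and Arzel\`a--Ascoli for $u_\Delta$, identification of the HJ limit as the $F_0$-viscosity solution which coincides with the $\cR F_0$-viscosity solution, Kruzhkov entropy inequalities away from $x=0$, and a final reduction of everything to the trace condition at the interface) matches the paper's proof, which indeed establishes Theorems~\ref{t:link} and \ref{t:num-scheme-scl} simultaneously. You also correctly locate the difficulty in the identification of the traces of the SCL limit. However, at precisely that point your plan is circular: you propose to conclude $(\bar v(t,0-),\bar v(t,0+))\in\cG_{\cR F_0}$ by ``using that a function which is a viscosity solution of the HJ equation with a given monotone junction nonlinearity has a spatial derivative whose traces lie in the associated germ'' --- but that implication \emph{is} the statement of Theorem~\ref{t:link}. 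No mechanism is offered for proving it, so the technical core is missing rather than merely deferred.

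The paper's actual mechanism is split in two. First, in the already-relaxed case $F_0=\cR F_0$, the trace identification is carried out \emph{directly at the SCL level}: for each $K=(\kappa_L,\kappa_R)\in\cG_{F_0}$ the piecewise-constant function $\kappa_\Delta$ is an \emph{exact} discrete solution of the scheme (this is exactly where membership in the germ is used, since it forces all three numerical fluxes $g^{H_L},F_0,g^{H_R}$ to agree on $K$); the discrete Kruzhkov inequality of Lemma~\ref{l:discrete-entropy} applied to the pair $(v_\Delta,\kappa_\Delta)$ is then integrated against test functions $\psi(t)\max\{0,1-\eps^{-1}|x|\}$ concentrating at the interface, and together with Panov's strong-trace theorem this yields $D(P,K)\ge 0$ for every $K\in\cG_{F_0}$, where $P=(v(t,0-),v(t,0+))$. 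Proposition~\ref{p:charac} --- itself proved by an HJ viscosity argument with the reduced test functions of Definition~\ref{defi:reduction} --- then upgrades this family of dissipation inequalities to $P\in\cG_{\cR F_0}$, and maximality of the germ (Lemma~\ref{lem::C11}) gives uniqueness of the limit. Second, the general case $F_0\ne\cR F_0$ is reduced to the relaxed one through the HJ equation: $u_\Delta\to u$, the $F_0$-viscosity solution, which equals the $\cR F_0$-viscosity solution $\bar u$ by the relaxation invariance from \cite{FM}; since $v_\Delta=\partial_x u_\Delta$ and $v_\Delta$ is BV-compact, its limit must be $u_x=\bar u_x=\bar v$, the $\cG_{\cR F_0}$-entropy solution already constructed. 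Your proposal inverts this logic (trying to read the germ off the HJ solution in all cases) and omits the stationary-germ-element entropy inequalities and the characteristic-points proposition, which are what actually make the argument close.
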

%--------------------------------------------------------------------
  \begin{rem}
    Since we want to relate solutions for scalar conservation laws with the ones for Hamilton-Jacobi
equations, the functions $H_L$ and $H_R$ are both flux functions and Hamiltonians. We make assumptions
on $H_L$ and $H_R$ that are convenient to work with for both equations. More precisely, the continuity and
coercivity of the $H_\alpha$'s is used in the HJ framework while the Lipschitz continuity and
the non-degeneracy (no open interval on which $H_\alpha$ is constant) is used at the level of the conservation law. 
\end{rem}
\begin{rem}
  The assumptions we make on $u_0$ are also necessary either for solving the Hamilton-Jacobi equation
  of for solving the scalar conservation law (with the initial data $v_0 =(u_0)_x$). It is convenient
  to assume that $u_0$ is globally Lipschitz so that viscosity solutions are also globally Lipschitz
  and entropy solutions are essentially bounded. The BV assumption on $v_0$ is convenient to prove
  the convergence of the numerical scheme associated with the scalar conservation law. 
\end{rem}

This theorem derives from the convergence of {the} two numerical schemes associated
with each equation. 
%--------------------------------------------------------------------
\begin{theo}[Convergence of the numerical scheme for SCL] \label{t:num-scheme-scl}
Let $\Delta t$ and $\Delta x$ satisfy the CFL condition~\eqref{e:CFL}. 
  The function $v_\Delta$ weakly converges in $L^1_{\mathrm{loc}} ((0,+\infty) \times \R)$
  as $\Delta x \to 0$  towards
  the unique $\mathcal G_{F}$-entropy solution $v$ of \eqref{eq:scl},\eqref{eq:ic-scl}, with $F:=\mathcal R F_0$ defined in \eqref{eq::C1}.
\end{theo}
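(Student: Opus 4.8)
The plan is to pass to the limit not directly in the conservation-law scheme \eqref{eq:scheme-SCL}, but in the Hamilton--Jacobi scheme \eqref{eq:scheme-HJ}, exploiting the fact that, by \eqref{eq:DefvDelta}, $v_\Delta$ is exactly the discrete spatial derivative of $u_\Delta$. So I would first control and pass to the limit in $u_\Delta$, and only then read off and identify the limit of $v_\Delta$. The first step is the a priori estimates: using the monotonicity of the scheme \eqref{eq:scheme-HJ} under the CFL condition \eqref{e:CFL} (for which the sign conditions in \eqref{e:assum-F} on $F_0$ are precisely what is needed) together with the Lipschitz regularity of $u_0$, one gets a uniform-in-$\Delta$ spatial Lipschitz bound for $u_\Delta$, equivalently a uniform bound $\|v_\Delta\|_{L^\infty}\le C$ with $C$ depending only on $\mathrm{Lip}(u_0)$ and on $\mathfrak L$; consistency of the numerical Hamiltonians ($g^{H_\alpha}(p,p)=H_\alpha(p)$, and $F_0(p,p)=H_\alpha(p)$ on the relevant range by \eqref{e:assum-F}) together with \eqref{e:CFL} then upgrades this to a uniform Lipschitz bound in time, so that $(u_\Delta)_\Delta$ is precompact for local uniform convergence. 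I do not expect a uniform $\BV$ bound on $v_\Delta$ near $x=0$ to be available — the discontinuity of the flux pumps variation into a shrinking neighbourhood of the interface — which is exactly why only weak convergence is asserted and why the Hamilton--Jacobi detour is used.

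Next I would prove the convergence of the HJ scheme. The scheme \eqref{eq:scheme-HJ} is monotone, stable and consistent with \eqref{eq:hj}, so the half-relaxed limits method (Barles--Perthame--Souganidis), in the version adapted to a discontinuous Hamiltonian with a junction condition, applies: using the comparison principle and the stability of $F_0$-viscosity solutions from the authors' viscosity framework, every subsequential local-uniform limit of $u_\Delta$ is the (unique) $F_0$-viscosity solution $u$ of \eqref{eq:hj}--\eqref{eq:ic-hj}, hence $u_\Delta\to u$ locally uniformly. Since $v_\Delta$ is the discrete $x$-derivative of $u_\Delta$ and is bounded in $L^\infty$ uniformly, it converges to $u_x$ in $\mathcal D'((0,+\infty)\times\R)$, hence weakly-$\ast$ in $L^\infty$ and \emph{a fortiori} weakly in $L^1_{\mathrm{loc}}$. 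Set $v:=u_x$.

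It then remains to identify $v$ as the $\mathcal G_F$-entropy solution. Away from $x=0$, $u$ solves $u_t+H_\alpha(u_x)=0$ in the viscosity sense with $H_\alpha$ Lipschitz and coercive, so $v=u_x$ is an entropy solution of $v_t+H_\alpha(v)_x=0$ on $\{x<0\}$ and on $\{x>0\}$, by the (non-convex) correspondence between viscosity and entropy solutions in one space dimension. The non-degeneracy assumption in \eqref{e:assum-H} (no interval of constancy of $H_\alpha$) lets me invoke the strong-trace theorem for non-degenerate scalar conservation laws, so $v$ has strong one-sided traces $v(t,0^\pm)$. To see that $(v(t,0^-),v(t,0^+))\in\mathcal G_F$ for a.e.\ $t$, I would use the relaxation principle for \eqref{eq:hj}: $u$ is also the $F$-viscosity solution with $F=\cR F_0$ (defined in \eqref{eq::C1}), and for the \emph{relaxed} nonlinearity the junction condition holds in strong trace form, $u_t(t,0)+F(v(t,0^-),v(t,0^+))=0$ a.e.; combining this with the traces at $x=0^\mp$ of the one-sided equations, which give $u_t(t,0)=-H_L(v(t,0^-))=-H_R(v(t,0^+))$ (a short argument using that $u$ is Lipschitz in $t$ and that $v$ has strong traces), yields $H_L(v(t,0^-))=H_R(v(t,0^+))=F(v(t,0^-),v(t,0^+))$, i.e.\ the membership \eqref{eq::C12}. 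Since $\mathcal G_F$ is a complete, maximal germ — which I would check from the structure of $\cR F_0$ and its link with $\mathcal G$-Godunov fluxes — the one-sided Kruzhkov inequalities together with this trace condition exactly characterise the $\mathcal G_F$-entropy solution in the AKR sense \cite{MR2807133}. Finally, by AKR uniqueness $v$ does not depend on the extracted subsequence, so the full family $v_\Delta$ converges weakly in $L^1_{\mathrm{loc}}$ to $v$.

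I expect the heart of the argument to be this last step, and more precisely the fact — emphasised in the introduction — that the scheme selects the \emph{relaxed} germ $\mathcal G_{\cR F_0}$ rather than the naive $\mathcal G_{F_0}$, and that this germ is complete and maximal. The relaxation is forced by a boundary layer attached to the discrete interface flux $F_0(v^n_{-\frac12},v^n_{\frac12})$, and disentangling it requires importing the relaxation analysis carried out at the Hamilton--Jacobi level in \cite{MR3621434,forcadel2023nonconvex} and translating it into the germ/entropy language of \cite{MR2807133}; by comparison, the a priori estimates, the convergence of the HJ scheme, and the away-from-interface identification are routine.
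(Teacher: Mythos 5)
Your overall architecture is close to the paper's in one respect: the paper also handles the general case $F_0\neq\cR F_0$ by going through the Hamilton--Jacobi scheme (Theorem~\ref{t:num-scheme-hj}), the identity $v_\Delta=\partial_x u_\Delta$, and the invariance $u$ is an $F_0$-solution iff it is a $\cR F_0$-solution. But there is a genuine gap at the heart of your argument, namely the identification of the trace pair at the interface. You assert, as a ``relaxation principle'', that the junction condition holds \emph{in strong trace form}, i.e. $u_t(t,0)+F(v(t,0^-),v(t,0^+))=0$ for a.e.\ $t$ with $F=\cR F_0$. This is not an available off-the-shelf fact: the viscosity framework only gives inequalities tested against $C^1_\wedge$ functions, and converting those into a pointwise identity involving the strong traces of $u_x$ is essentially equivalent to the statement you are trying to prove (it is more or less Theorem~\ref{t:link} itself). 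The paper does something different and self-contained: in the self-relaxed case it derives, from the discrete entropy inequalities of Lemma~\ref{l:discrete-entropy} applied with the \emph{stationary discrete solution} $w_\Delta=\kappa_\Delta$ associated with an arbitrary $K\in\cG_{F_0}$ (stationarity holds precisely because $K$ satisfies $g^{H_\alpha}(\kappa_\alpha,\kappa_\alpha)=H_\alpha(\kappa_\alpha)=F_0(\kappa_L,\kappa_R)$), the dissipation inequality $D(P,K)\ge 0$ for the trace pair $P$, by concentrating test functions at $x=0$; it then upgrades $D(P,\cdot)\ge 0$ on $\cG_F$ to the membership $P\in\cG_F$ via Proposition~\ref{p:charac}, whose proof is an auxiliary Hamilton--Jacobi construction using the reduced test functions of Definition~\ref{defi:reduction}. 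Without either that dissipation argument or a proof of your ``strong trace form'' claim, the key step is missing.

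Two secondary points. First, your expectation that no uniform $\BV$ control is available and that this forces weak convergence misreads the situation: the paper proves discrete $\BV$ estimates in time (Lemma~\ref{l:t-BV-discrete}) and in space away from the interface (Lemma~\ref{l:x-BV-discrete}), and combines them with the $L^\infty$ bound to get \emph{strong} $L^1_{\mathrm{loc}}$ compactness of $v_\Delta$; this is also what makes Panov's strong traces and the passage to the limit in the entropy inequalities legitimate. Your route, which only yields weak-$\ast$ convergence of $v_\Delta$ to $u_x$, would still need strong convergence (or the $\BV$ bounds) somewhere to justify passing to the limit in the nonlinear quantities $q_\alpha(v_\Delta,\kappa)$ and to apply the trace theorem to the limit as an entropy solution. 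Second, for uniqueness you only need maximality of $\cG_{\cR F_0}$ (Lemma~\ref{lem::C11}), not completeness; completeness is in fact a \emph{consequence} of the convergence theorem applied to Riemann data, so invoking it beforehand would be circular.
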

%--------------------------------------------------------------------
{\begin{rem}
Let us point out that even if we put the desired junction condition $F_0$ in the numerical scheme, we recover at the limit $\Delta x\to0$ the relaxed junction condition $\mathcal G_{\mathcal R F_0}$.
\end{rem}}
%--------------------------------------------------------------------
\begin{theo}[Convergence of the numerical scheme for HJ] \label{t:num-scheme-hj}
Let $\Delta t$ and $\Delta x$ satisfy the CFL condition~\eqref{e:CFL}. 
  The function $u_\Delta$ converges locally uniformly 
  as $\Delta x \to 0$  towards
  the unique $F_0$-viscosity solution $u$ of \eqref{eq:hj},\eqref{eq:ic-hj}. 
\end{theo}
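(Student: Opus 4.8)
The plan is to establish convergence of the scheme \eqref{eq:scheme-HJ} using the classical Barles--Souganidis / Crandall--Lions machinery for monotone, consistent and stable schemes, adapted to the discontinuous junction setting. First I would verify the three structural ingredients. \emph{Monotonicity:} under the CFL condition \eqref{e:CFL} and the monotonicity assumptions \eqref{e:assum-F} on $F_0$, the map $\mathcal{F}_j$ in \eqref{e:monotone-scheme} is non-decreasing in each of its three arguments; this is exactly what was recorded after \eqref{e:CFL}, and it immediately gives a discrete comparison principle. \emph{Stability:} because $u_0$ is globally Lipschitz and the Godunov Hamiltonians $g^{H_\alpha}$ and $F_0$ are Lipschitz, one propagates a uniform Lipschitz bound in $x$ for the discrete gradients $v^n_{j+1/2}$ (comparing the scheme with its spatial translates) and then a uniform Lipschitz bound in $t$ for $u_j^n$ (from the scheme itself, using the bound on the gradients and the Lipschitz constants of $g^{H_\alpha}$, $F_0$). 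Hence $u_\Delta$ is uniformly Lipschitz on $(0,+\infty)\times\R$ and, by Arzel\`a--Ascoli, relatively compact for local uniform convergence. \emph{Consistency:} on each open half-line $\{x<0\}$, $\{x>0\}$ the scheme is the standard Godunov scheme for $u_t+H_\alpha(u_x)=0$, which is consistent in the usual sense, while at $j=0$ the scheme uses $F_0$ directly.

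Next I would pass to the limit. Let $\bar u=\limsup^*u_\Delta$ and $\underline u=\liminf_*u_\Delta$ be the relaxed half-limits; by stability these are finite, Lipschitz, and satisfy $\underline u\le\bar u$ with equality at $t=0$ since $u_\Delta(0,\cdot)=u_0$ on the grid and the initial Lipschitz bound controls the interpolation error. The core of the argument is to show $\bar u$ is a $F_0$-viscosity subsolution and $\underline u$ a $F_0$-viscosity supersolution of \eqref{eq:hj} in the sense of the viscosity theory developed by the authors (which the excerpt cites as available). Away from $x=0$ this is the textbook Barles--Souganidis argument: at a strict local max of $\bar u-\varphi$ with $\varphi$ smooth and $\varphi_x$ controlled, plug the grid values of $\varphi$ into the monotone scheme, use monotonicity to discard error terms of the right sign, and let $\Delta\to0$ using consistency of $g^{H_\alpha}$. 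At $x=0$ one tests against the admissible test functions of the HJ-with-junction theory; here monotonicity of $F_0$ in $(p_L,p_R)$ with the correct signs is precisely what makes the limiting inequality $u_t+F_0(u_x(t,0-),u_x(t,0+))\le 0$ (resp. $\ge 0$) come out, and one must be careful that the one-sided derivatives $v^n_{-1/2}$ and $v^n_{1/2}$ feeding $F_0$ converge to $\varphi_x(t,0-)$ and $\varphi_x(t,0+)$ respectively — which is where the uniform Lipschitz bound on the discrete gradients is used again. Finally, the comparison principle for $F_0$-viscosity solutions (from the cited viscosity theory) gives $\bar u\le\underline u$, hence $\bar u=\underline u=:u$ is the unique $F_0$-viscosity solution, and the whole family $u_\Delta$ converges locally uniformly to $u$.

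The main obstacle I anticipate is the junction analysis at $x=0$: unlike the interior, the natural notion of viscosity sub/supersolution there is the relaxed one from the authors' HJ theory, and one has to check that the discrete scheme at $j=0$ — which literally imposes $F_0$, not its relaxation $\cR F_0$ — still produces, in the limit, a genuine $F_0$-viscosity solution. Concretely, the subtlety is that the discrete gradients $v^n_{\pm1/2}$ need not converge to arbitrary values but only to admissible traces, and the correct statement of the viscosity inequality at $x=0$ must be the one for which $F_0$ and $\cR F_0$ define the same solution set; reconciling this requires invoking the relaxation machinery recalled in the next section of the paper. A secondary technical point is handling the initial layer, i.e. showing $\bar u(0,\cdot)\le u_0\le\underline u(0,\cdot)$ despite the scheme only being first order; this follows from the uniform Lipschitz bounds via a standard barrier argument but must be stated carefully since the Hamiltonians are merely coercive, not globally bounded.
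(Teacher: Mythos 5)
Your overall architecture (monotone $+$ stable $+$ consistent $\Rightarrow$ convergence via half-relaxed limits and the comparison principle of \cite{FM}) is exactly the paper's route, which is the Barles--Souganidis argument of \cite{MR1115933} adapted to the junction. Two points in your write-up deserve correction, one of which is a genuine flaw as stated.

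First, your stability argument proposes to propagate a uniform Lipschitz bound in $x$ on the discrete gradients $v^n_{j+1/2}$ ``by comparing the scheme with its spatial translates.'' This fails at the junction: the flux $f_j$ in \eqref{e:def-fj} depends on $j$, so the scheme is not translation invariant and a spatial translate of a discrete solution is not a discrete solution across $j=0$. The paper avoids this entirely: Lemma~\ref{l:stability} only establishes the time-growth bound $|u_\Delta(t,x)-u_\Delta(0,x)|\le C_0 t$ by comparing $u^n_j$ with the explicit barriers $u^0_j\pm C_0 t_n$ (using monotonicity and the fact that $f_j$ evaluated on the initial discrete gradients is bounded by $C_0$), and this local $L^\infty$ bound is all that the half-relaxed-limits method requires. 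Second, you state that at $x=0$ one must show that $v^n_{-1/2}$ and $v^n_{1/2}$ converge to $\partial_x^L\varphi$ and $\partial_x^R\varphi$; this is neither true nor needed. In the monotone-scheme argument the contact $u_\Delta\le\varphi+C_\Delta$ together with monotonicity lets you substitute the \emph{test function} into the scheme, so that \eqref{e:phi-eq} holds for $\varphi$ and only the discrete gradients of $\varphi$ appear; Lemma~\ref{l:consistency} then yields directly the junction inequality $\phi_t+\min\bigl(H_L(\phi_x^L),H_R(\phi_x^R),F_0(\phi_x^L,\phi_x^R)\bigr)\le 0$, which is precisely the subsolution condition of Definition~\ref{defi:viscosity}. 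For the same reason your anticipated ``main obstacle'' about $F_0$ versus $\cR F_0$ does not arise in this theorem: the min/max structure built into Definition~\ref{defi:viscosity} is what the consistency lemma delivers, and the relaxation machinery is only needed for the conservation-law statement. With these two repairs your proof coincides with the paper's.
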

%--------------------------------------------------------------------
{
\begin{rem}
Let us note that even if the results in Theorem \ref{t:num-scheme-scl} and Theorem \ref{t:num-scheme-hj} seems different since, in the last one, we recover a $F_0$-viscosity solution and not a $\mathcal R F_0$-viscosity solution, this is not the case. Indeed, we know from \cite{FM} that $u$ is a $F_0$-viscosity solution if and only if u is $\mathcal R F_0$-viscosity solution.
\end{rem}
}

\paragraph{Open question.} {Theorem \ref{t:num-scheme-scl} is proved in the case where the junction condition is not relaxed (\textit{i.e.} $F_0\ne \mathcal R F_0$), using the result of Theorem \ref{t:num-scheme-hj}.}
Is it possible to show the result of Theorem \ref{t:num-scheme-scl} directly at the level of scalar conservation laws, without using the HJ framework? Moreover, is it possible to get an error estimate on the difference $|v_\Delta-v|_{L^1}$?
\medskip

The next theorem concerns germs. Roughly speaking, a germ is a collection of  admissible
strong traces for entropy solutions of the conservation law~\eqref{eq:scl}. 
It is maximal if it is not contained in a bigger germ and complete if the Riemman
problem can be solved for all admissible strong traces. Precise definitions are given
in the next section. We prove that a germ is maximal and complete
if and only if it is  represented by a ``self-relaxed nonlinearity''. By doing so, we enrich the AKR theory from \cite{MR2807133}.
%-------------------------------------------------------------
\begin{theo}[Classification of maximal and complete germs]\label{t:germ}
  Let $\cG$ be a germ for \eqref{eq:scl}. The following properties are equivalent.
  \begin{enumerate}[(i)]
    \item \label{0:germ} there exists a function $F_0$  satisfying \eqref{e:assum-F} such that $\cG = \cG_{\cR F_0}$.
    \item \label{i:germ} The germ $\cG$ is maximal and complete.
    \end{enumerate}
  \end{theo}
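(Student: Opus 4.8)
The plan is to establish the two implications separately, relying on the machinery already in place. For the direction \eqref{0:germ} $\Rightarrow$ \eqref{i:germ}, suppose $\cG = \cG_{\cR F_0}$ for some $F_0$ satisfying \eqref{e:assum-F}. Completeness amounts to solvability of the Riemann problem for every pair of traces in $\cG$: this is precisely the content obtained by combining Theorem \ref{t:link} (or Theorem \ref{t:num-scheme-scl}) with the fact that the $\cR F_0$-viscosity solution of the HJ Riemann problem exists and is unique; differentiating in $x$ produces the required self-similar entropy solution, whose traces at $0\pm$ lie in $\cG_{\cR F_0}$ by construction of the germ in \eqref{eq::C12}. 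For maximality, I would argue by contradiction: if $\cG \subsetneq \cG'$ for some germ $\cG'$, pick $(q_L,q_R) \in \cG' \setminus \cG$. Since $\cG'$ is a germ (hence $L^1$-dissipative / satisfies the Kato inequality against elements of $\cG$), the pair $(q_L,q_R)$ must be "compatible" with all of $\cG$ in the Rankine–Hugoniot/entropy-dissipation sense; the key point is that a self-relaxed nonlinearity $F = \cR F_0$ already saturates this compatibility — the relaxation operator $\cR$ is exactly designed so that $\cG_{\cR F_0}$ cannot be enlarged while preserving dissipativity. Making this precise is where I would lean on the characterisation of $\cR$ recalled in the next section (the reference to \eqref{eq::C1}) and on the AKR structure theorems from \cite{MR2807133} relating maximality to the absence of admissible "extra" connections.

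For the converse \eqref{i:germ} $\Rightarrow$ \eqref{0:germ}, start from a maximal complete germ $\cG$. Since $\cG$ is complete, the Riemann problem is solvable, and one can read off from its solutions a well-defined Godunov-type flux at the interface, i.e. a function $G(p_L,p_R)$ giving the interface flux value $H_L(p_L^\ast) = H_R(p_R^\ast)$ for the resolved Riemann data; this is the $\cG$-Godunov flux alluded to in the introduction. The candidate $F_0$ is then built from this interface flux — concretely, one takes $F_0$ to be (a monotone representative of) the map sending $(p_L,p_R)$ to that common interface value, and checks it satisfies \eqref{e:assum-F}: Lipschitz continuity follows from the Lipschitz bounds on $H_L,H_R$ and the structure of the Riemann fan, monotonicity (nondecreasing in $p_L$, nonincreasing in $p_R$) is the standard Godunov monotonicity, and coercivity follows from the coercivity of $H_L,H_R$ in \eqref{e:assum-H}. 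It then remains to verify $\cG_{\cR F_0} = \cG$. The inclusion $\cG \subseteq \cG_{\cR F_0}$ should follow because every element of $\cG$ is, by completeness, realised as a stationary Riemann solution, hence satisfies $H_L(p_L) = H_R(p_R) = F_0(p_L,p_R)$, and relaxation only enlarges (or keeps fixed) the associated germ; the reverse inclusion $\cG_{\cR F_0} \subseteq \cG$ is then forced by \emph{maximality} of $\cG$, since $\cG_{\cR F_0}$ is itself a germ (being of the form $\cG_{\cR F_0}$ and hence a germ by the first implication) containing $\cG$.

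The main obstacle, I expect, is the maximality half of \eqref{0:germ} $\Rightarrow$ \eqref{i:germ} together with the "$\cG_{\cR F_0}$ is genuinely a germ" point it rests on: one must show that passing to the relaxed nonlinearity not only yields a consistent junction condition at the HJ level (known from \cite{FM}, as recalled in the remark after Theorem \ref{t:num-scheme-hj}) but also that the differentiated object is $L^1$-dissipative and admissible as an AKR germ, and that it is inclusion-maximal among such. This is exactly the place where the two theories — the viscosity-solution relaxation operator $\cR$ and the AKR notion of (maximal) germ — have to be matched, and I anticipate the argument will route through Theorem \ref{t:link}: uniqueness of the $F_0$-viscosity solution transfers, via $v = u_x$, to a uniqueness/maximality statement for $\cG_{\cR F_0}$-entropy solutions, and a germ admitting unique entropy solutions for all BV data is necessarily maximal by the AKR dichotomy. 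Conversely the completeness needed in \eqref{i:germ} matches the existence side of the HJ theory. So the heart of the proof is a careful dictionary between "self-relaxed $F_0$", "$\cG$-Godunov flux", and "maximal complete germ", with Theorems \ref{t:link} and \ref{t:num-scheme-scl} as the bridge; the individual verifications (Lipschitz, monotonicity, coercivity of the constructed $F_0$; the two set inclusions) are then routine given that dictionary.
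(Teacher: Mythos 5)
Your implication \eqref{0:germ} $\Rightarrow$ \eqref{i:germ} is essentially the paper's argument, except that you re-derive (vaguely) something that is already available: the maximality of $\cG_{\cR F_0}$ is exactly Lemma~\ref{lem::C11}, proved in Section~\ref{s:prelim} by a direct dissipation computation (choosing a specific $Q^\ast\in\cG_{\cR F_0}$ built from the Godunov structure of $\cR F_0$ and showing $D(\bar P,Q^\ast)=-2(\bar\lambda-\lambda^\ast)<0$ for any candidate extra point $\bar P$). Your sketch of this step --- ``the relaxation operator is exactly designed so that $\cG_{\cR F_0}$ cannot be enlarged'' --- is the right intuition but is not a proof; you should simply invoke Lemma~\ref{lem::C11}. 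Completeness via Theorem~\ref{t:num-scheme-scl} applied to Riemann initial data is exactly what the paper does.

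The genuine gap is in \eqref{i:germ} $\Rightarrow$ \eqref{0:germ}. The paper does not prove this direction at all: it invokes Theorem~\ref{th::C22}, an external classification result from \cite{Mon} whose proof is explicitly described as too long to reproduce, asserting that any complete germ is maximal and of the form $\cG_F$ with $F=\cR F$ satisfying \eqref{e:assum-F}. Your proposed construction (read off the $\cG$-Godunov interface flux from the Riemann solver, verify \eqref{e:assum-F}, then get $\cG\subseteq\cG_{F_0}\subseteq\cG_{\cR F_0}$ from stationarity of Riemann solutions with data in $\cG$ together with Proposition~\ref{p:prop-relax}~\eqref{p:pr-i}, and conclude by maximality) is a plausible outline of what such a proof must do, and the final two-inclusion step is correct \emph{modulo} the construction. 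But the steps you dismiss as ``routine'' --- that the interface value $H_L(p_L^\ast)=H_R(p_R^\ast)$ of the resolved Riemann fan is single-valued as a function of $(p_L,p_R)$, that it is Lipschitz with a controlled constant, monotone in the right sense, and coercive --- constitute precisely the hard content that the paper delegates to \cite{Mon}. In particular, Lipschitz continuity of a Godunov flux built from an abstract maximal complete germ (with non-convex, merely non-degenerate $H_\alpha$) does not follow in one line from the Lipschitz bounds on $H_L,H_R$; this is where your argument is incomplete rather than merely terse.
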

%-------------------------------------------------------------

\subsection{Brief review of literature}

\paragraph{Scalar conservation laws with discontinuous flux.}
Contributions to the study of scalar conservation laws with discontinuous flux are numerous.
We can cite for instance the work by F.~Bachmann and J.~Vovelle \cite{zbMATH05025020} where
the flux function is only assumed to be $C^1$ and their uniqueness proof do not require the
existence of strong traces. The reader is referred to the introduction of this work for
the reference containing the model or for previous mathematical contributions
under stronger assumptions. The book by M.~Garavello and B.~Piccoli \cite{zbMATH05130020} was
also influential: the network geometrical setting involves to consider flux functions with
discontinuities at edges. B. Andreianov, K.~H.~Karlsen and N.~H.~Risebro in \cite{MR2807133}
developed a general theory of semi-groups of entropy solutions associated with a scalar
conservation law on the real line with a discontinuity. In particular, they shed light on
the fact that several conditions can be imposed at the discontinuity and they can be characterized
in terms of a set that they refer to as a \emph{germ}. The interested reader is refer to recent
survey articles such as \cite{zbMATH06720857,zbMATH06541454} for more references about this
line of research. {We also refer to \cite{Musch22,Fjord22} for the extension to junctions.}

\paragraph{Hamilton-Jacobi equations with discontinuous Hamiltonians.}
The study of Hamilton-Jacobi equations with discontinuous (in $x$) and convex (in $p=u_x$) Hamiltonians developed
with the study of these equations on networks \cite{zbMATH06189399,zbMATH06144570}.
These first contributions are closely related to optimal control of trajectories in a two-domains framework \cite{zbMATH06198069}. Many contributions followed these three articles and the reader is referred to the book by G.~Barles and E.~Chasseigne \cite{zbMATH07814183} for an up-to-date state of the art, including original contributions to the topic. 

\paragraph{Boundary conditions.}
The fact that the boundary condition can be lost by solutions of first order equations is classical.
Beyond transport effect and the fact that characteristics can exit the domain, an important contribution
to this subject is the work by C.~Bardos, A.-Y.~Le Roux and J.~C.~Nédélec about the Dirichlet
problem for a scalar conservation law \cite{zbMATH03651023}. They gave a weak formulation of the problem
by passing to the limit in the viscous approximation. 
As far as Hamilton-Jacobi equations are concerned, the boundary condition that is effectively
obtained when imposing one that is compatible with the maximum principle were first described
for convex Hamiltonians \cite{MR3621434}. In this
case, any relaxed boundary condition is characterized by a real number, amounting for the limitation
of the ``flux'' at the discontinuity (see also \cite{MR3709301}). The non-convex case is much richer,
the class of relaxed boundary conditions is much larger. It was first studied by J.~Guerand~\cite{MR3695961}
and recently revisited by the authors \cite{forcadel2023nonconvex}{(see also \cite{FM} for the case of junctions)}. As observed in \cite{forcadel2023nonconvex},
it is remarkable that the relaxation
operator can be described in terms of Godunov fluxes appearing in the BLN condition. 

\paragraph{Comparison principles for Hamilton-Jacobi equations.}
In order to prove that the numerical scheme converges, it is important to prove a comparison principle.
The first results in this direction are contained in \cite{MR3729588}. More recently, the authors developed
a new strategy to prove such a strong uniqueness result \cite{forcadel2023coercive,forcadel2024twin}.
See also \cite{FM} for the case of several branches.

\paragraph{Numerical schemes.}
A general theorem for numerical schemes for Hamilton-Jacobi equation (and more generally second order
nonlinear parabolic equations) \cite{MR1115933} asserts that they converge as soon as they are monotone,
stable and consistent. 
A numerical scheme for convex Hamilton-Jacobi equations on a junction was first studied in \cite{MR3311457}.
It was motivated by applications to traffic flow. An error estimate was obtained in \cite{MR3962893}.

As far as scalar conservation laws are concerned, the convex case of the problem adressed in the present
article was recently treated in \cite{cardaliaguet2023conservation}. More generally, for examples of numerical schemes on junctions with $N\ge 1$ branches, we refer the reader to \cite{Musch22,Fjord22}.

%%%%%%%%%%%%%%%%%%%%%%%%%%%%%%%%%%%%%%%%%%%%%%%%%%%%%%%%%%%%%%

\subsection{Organisation of the article}

The article is organised as follows. In Section~\ref{s:prelim}, definitions of germs, solutions and relaxation operators are recalled. We also show that germs $\mathcal G_{\mathcal R F_0}$ are maximal.
In Section~\ref{s:criterion}, we recall a criterion for checking that a function is a viscosity solution of the Hamilton-Jacobi equation~\eqref{eq:hj} and we establish a similar criterion for entropy solutions of the scalar conservation law~\eqref{eq:scl}. The numerical approximation of Hamilton-Jacobi equations is addressed in Section \ref{s:hj-scheme}, where the  convergence of the numerical solution is done (proof of Theorem~\ref{t:num-scheme-hj}).
The numerical approximation of the scalar conservation law
is studied in Section~\ref{s:scl-scheme}, in particular, Theorem~\ref{t:num-scheme-scl} is proved.
%Then Theorem~\ref{t:num-scheme-hj} about the approximation of the viscosity solutions of \eqref{eq:hj} is contained
%in the Section~\ref{s:hj-scheme}. 
 This section also contains the (short) proof of Theorem~\ref{t:link}. The final (short) Section~\ref{s:germs} is devoted to the proof of Theorem~\ref{t:germ}.

\paragraph{Notation.}
For two real numbers $a,b$, the maximum of $a$ and $b$ is denoted by $a \vee b$ while the minimum is denoted by $a \wedge b$. The positive part of $a$ is defined by $a \vee 0$ and is denoted by $a_+$. The negative part $a_-$ of $a$ is defined by $\max(0,-a)$. 

The Lipschitz constant of the Hamiltonian/flux function $H_\alpha$ for $\alpha \in \{R,L\}$ is denoted by $\mathfrak{L}_\alpha$ while $\mathfrak{L}_0$ denotes the Lipschitz constant of $F_0$ (in all variables).

\section{Germs, entropy solutions,  viscosity solutions}
\label{s:prelim}

\subsection{Germs}

In their study of scalar conservation laws with discontinous flux, B.~Andreianov, K.~H.~Karlsen and N.~H.~Riserbro introduced the notion of germs. 
In order to recall their definition, we first recall the definition of the entropy flux functions $q_L$ and $q_R$ associated with the fluxes (or nonlinearities) $H_L$ and $H_R$. 
\begin{equation}\label{e:qalpha}
\forall a, b \in \R, \quad    q_\alpha (a,b) = \sgn (a-b) (H_\alpha (a) - H_\alpha (b)), \qquad \alpha \in \{L,R\}.
\end{equation}
The definition of germs relies on the notion of dissipation. We recall that  it is defined as follows,
\[
  \forall P= (p_L,p_R),P'=(p_L',p_R') \in \R^2, \quad D (P,P') = q_L (p_L,p_L') - q_R (p_R,p_R').
\]

%------------------------------------------------------------------------
\begin{defi}[Germs]\label{defi:germ}
A set $\mathcal{G} \subset \R^2$ is a \emph{germ} for \eqref{eq:scl} if it satisfies
\begin{itemize}
  \item the \emph{Rankine-Hugoniot condition}: for all $(p_L,p_R) \in \mathcal{G}$, we have $H_L (p_L) = H_R (p_R)$. 
  \item the \emph{dissipation condition}: for all $P, P' \in \mathcal{G}$, we have $ D(P,P') \ge 0$. 
  \end{itemize}
\end{defi}
%------------------------------------------------------------------------
\begin{rem} \label{r:def-germ}
  In \cite{MR2807133}, a set $\mathcal{G}$ is called an admissible germ if it  only satisfies the Rankine-Hugoniot condition and is called $L^1$-dissipative if it also satisfies the dissipation condition. We will simply call them germs.  
\end{rem}

Important examples of germs are the ones coming from a junction function $F{_0}$,
\begin{equation}\label{eq::C23}
\mathcal{G}_{F{_0}} = \{ (p_L,p_R) \in \R^2: H_L (p_L) = H_R (p_R) = F_{0} (p_L,p_R) \}.
\end{equation}

\begin{defi}[Maximal germs]\label{defi:max-germ}
  A germ $\cG$ is \emph{maximal} if any germ containing $\cG$ coincide with $\cG$.
\end{defi}

In order to define complete germs, we recall what the Riemann problem associated with $(v_-,v_+) \in \R^2$ is. It consists in solving \eqref{eq:scl} with the initial condition:

\begin{equation}\label{eq::C2}
v(0,x) = \begin{cases} v_- & \text{ if } x<0, \\ v_+ & \text{ if } x>0.\end{cases}
\end{equation}
Given $(v_-,v_+)\in \R^2$, 
a solution of the $\mathcal G$-Riemann problem with initial data \eqref{eq::C2} consists, for some suitable data $(p_L,p_R)\in \mathcal G$, in a standard Kruzhkov self-similar solution in $x<0$ joining $v_-$ at $t=0,x<0$ and $p_L$ at $t>0,x=0-$, with a jump  at $x=0,t>0$
from $p_L$ to $p_R$ and a standard Kruzhkov self-similar solution in $x>0$ joining $p_R$ at $x=0+,t>0$ and $v_+$ at $x>0,t=0$.

%-------------------------------------------------------
\begin{defi}[Complete germs]\label{defi:complete-germ}
  A germ $\cG$ is \emph{complete} if for all $(v_-,v_+) \in \R^2$, there exists a $\mathcal G$-entropy solution $v$ of \eqref{eq:scl} with initial data \eqref{eq::C2}.
\end{defi}
%---------------------------------------------------------

\begin{rem}\label{rem::C3}
In particular the traces $(p_L,p_R)$ at $x=0-,0+$ of the solution $v$ of the $\mathcal G$-Riemann problem lies in the germ $\mathcal G$. 
\end{rem}

\subsection{Entropy solutions}

% --------------------------------------
\begin{defi}[Strong traces]\label{d:strong}
  Let $T>0$ and  $v \colon (0,T) \times \R \to \R$ be essentially bounded. We say that $v$ admits a \emph{strong trace} at $x=0-$ (resp. $x=0+$) if the function $x \mapsto v(\cdot,x)\in L^1((0,T))$
  has an essential limit in $L^1 ((0,T))$ as $x \to 0-$ (resp. $x \to 0+$).  
\end{defi}
%---------------------------------------
E.~Yu.~Panov \cite{zbMATH05275060} proved that classical entropy solutions in $(0,T) \times (0,+\infty)$ (resp. $(0,T) \times (-\infty,0)$)
admit strong traces at $x=0+$ (resp. $x=0-$) as soon as there is no open interval on which flux functions $H_\alpha$ are constant. 
%-------------------------------------------------------------------------
\begin{defi}[$\mathcal G$-entropy solutions -- \cite{MR2807133}]\label{defi:entropy}
  Let $\cG$ be a germ and $T>0$, let $v \colon (0,T) \times \R \to \R$ be essentially bounded and such that, for almost every $t \in (0,T)$, the function $v(t,\cdot)$ has strong traces at $x=0$, from
  the left $v(t,0-)$ and from the right $v(t,0+)$. It is a \emph{$\mathcal G$-entropy solution} of \eqref{eq:scl}-\eqref{eq:ic-scl} if
\begin{itemize}
\item  it is a classical entropy solution in $(0,T) \times (-\infty,0)$ and
  $(0,T) \times (0,+\infty)$;
\item for almost all $t \in (0,T)$, we have $(v(t,0-),v(t,0+)) \in \cG$;
\item $v(t,\cdot) \to v_0$ in $L^1_{\mathrm{loc}}(\R)$ as $t\to 0+$. 
\end{itemize}
\end{defi}
%----------------------------------------------------------------------------
\begin{rem} \label{r:entropy-unique}
  $\mathcal G$-entropy solutions are unique (\cite[Theorem~3.11]{MR2807133}) as soon
  as the germ is \emph{definite}. In our case, this reduces to impose that the germ is \emph{maximal}
  since we only consider germs that are, following the terminology introduced in \cite{MR2807133}, $L^1$-dissipative.
\end{rem}

\subsection{Viscosity solutions}

In order to define viscosity solutions for the Hamilton-Jacobi equation, we have to specify the class of test functions we will work with. Following for instance \cite{MR3621434}, we use the following class. 
%-----------------------------------------------------------------------
\begin{defi}[Test functions]
  A test function $\varphi \colon (0,T) \times \R \to \R$ is continuous and its restriction to $(0,T) \times [0,+\infty)$ and $(0,T) \times (-\infty,0]$ are continuously differentiable. For such a function $\varphi$ and $X_0= (t_0,0)$, $\partial_x^L \varphi(X_0)$ and $\partial_x^R \varphi(X_0)$ denote  derivatives in $x$ at $X_0$ of the restrictions of $\varphi$ to $(0,T) \times (-\infty,0]$ and $(0,T) \times [0,+\infty)$ respectively.
\end{defi}
Let $Q_T$ denote $(0,T) \times \R$ and $C^1_\wedge(Q_T)$ denote the set of test functions.

We also say that a test function $\varphi$ touches a function $u \colon Q_T \to \R$ from above (resp. from below) at $X_0$ if $u(X_0) = \varphi (X_0)$ and it there exists a neighbourhood $\mathcal{V}$ such that $u \le \varphi$ (resp. $u \ge \varphi$) in $\mathcal{V}$. 
%-----------------------------------------------------------------------------
\begin{defi}[$F_{0}$-viscosity solutions]\label{defi:viscosity}
Let $T>0$ and let $u \colon (0,T) \times \R$ be locally bounded. 
\begin{itemize}
\item The function $u$ is a \emph{sub-solution} of \eqref{eq:hj} if it is upper semi-continuous on $(0,T) \times \R$ and if, for all test function $\varphi \in C^1_{\wedge}(Q_T)$ touching $u$ from above at $X_0 =(t_0,x_0) \in (0,T) \times \R$, we have
  \begin{align*}
    \partial_t \varphi + H_L (\partial_x \varphi) \le 0 & \text{ at } X_0 \text{ if } x_0 < 0, \\
    \partial_t \varphi + H_R (\partial_x \varphi) \le 0 & \text{ at } X_0 \text{ if } x_0 > 0, \\
    \partial_t \varphi + \min (H_L(\partial_x^L \varphi), H_R(\partial_x^R \varphi),F_{0}(\partial_x^L \varphi, \partial_x^R \varphi))  \le 0 & \text{ at } X_0 \text{ if } x_0 =  0, 
  \end{align*}
\item The function $u$ is a \emph{super-solution} of \eqref{eq:hj} if it is lower semi-continuous on $(0,T) \times \R$ and if, for all test function $\varphi \in C^1_{\wedge}(Q_T)$ touching $u$ from below at $X_0 =(t_0,x_0) \in (0,T) \times \R$, we have
  \begin{align*}
    \partial_t \varphi + H_L (\partial_x \varphi) \ge 0 & \text{ at } X_0 \text{ if } x_0 < 0, \\
    \partial_t \varphi + H_R (\partial_x \varphi) \ge 0 & \text{ at } X_0 \text{ if } x_0 > 0, \\
    \partial_t \varphi + \max (H_L(\partial_x^L \varphi), H_R(\partial_x^R \varphi),F_{0}(\partial_x^L \varphi, \partial_x^R \varphi))  \ge 0 & \text{ at } X_0 \text{ if } x_0 =  0, 
  \end{align*}
\item The function $u$ is a \emph{solution} of \eqref{eq:hj} if its upper semi-continuous envelope $u^\ast$ is a sub-solution and its lower semi-continuous envelope $u_\ast$ is a super-solution. 
\end{itemize}
\end{defi}
%------------------------------------------------------------------------------
\begin{rem}
  Since we will work with various nonlinearities $F_{0}$, it is convenient to simply say
that $u$ is an $F_{0}$-sub-solution of the Hamilton-Jacobi equation if
$u$ is a sub-solution of \eqref{eq:hj}. The same remark applies to super-solutions and solutions. 
\end{rem}

\begin{rem}\label{rem::C4}(Mapping the line onto the two half lines on the right)\\
Define
$$\left\{\begin{array}{lll}
\bar {u}^1(t,x):= {u}(t,x)& \quad \mbox{for}\quad x>0,&\quad H_1(p)=H_R(p)\\
\bar  {u}^2(t,x):= {u}(t,-x)& \quad \mbox{for}\quad x>0,&\quad H_2(p)=H_L(-p)\\
\bar F(p_1,p_2):=F(p_1,-p_2)&&
\end{array}\right.$$
Then HJ equation {\eqref{eq:hj}} is equivalent to
$$\left\{\begin{array}{lll}
\bar  {u}^i_t +H_i(\bar  {u}^i_x)&=0,&\quad x>0,\quad i=1,2\\
\bar  {u}^1(t,0)=\bar  {u}^2(t,0)&=:\bar  {u}(t)&\quad x=0\\
\bar  {u}_t +\bar F(\bar  {u}^1_x,\bar  {u}^2_x)&=0&\quad x=0\\
\end{array}\right.$$
which is the natural framework for HJ equations. This explains the signs that the reader may find strange  in the definitions below.
\end{rem}

\subsection{Characteristic points}

Characteristic points are first defined in the framework of HJ equations.
They are associated with the non-linearity $F_0$ at the origin. They are first
introduced in \cite{MR3621434} in the convex case and in \cite{MR3695961}  for non-convex Hamiltonians. See also  in \cite{forcadel2023nonconvex}.

%------------------------------------------------------------------------
\begin{defi}[Characteristic points for germs]\label{defi:charac-point-germ}
Let $\mathcal{G}$ be a germ for \eqref{eq:scl}.
\begin{itemize}
  \item
  A point $P = (p_L,p_R)$ lies in $\overline \chi(\mathcal{G})$ if { $P\in \mathcal G$ and if}
  there exists $\eps >0$ such that $H_\alpha(q_\alpha) > H_L(p_L) = H_R (p_R)$ for $\alpha \in \{L,R\}$ and $q_L \in (p_L-\eps,p_L)$ and $q_R \in (p_R,p_R+\eps)$. 
\item   A point $P = (p_L,p_R)$ lies in $\underline \chi (\mathcal{G})$ if { $P\in \mathcal G$ and if}
  there exists $\eps >0$ such that $H_\alpha(q_\alpha) < H_L(p_L) = H_R (p_R)$ for $\alpha \in \{L,R\}$ and $q_L \in (p_L,p_L+\eps)$ and $q_R \in (p_R-\eps,p_R)$. 
  \item We set $\chi(\mathcal G):=\underline{\chi}(\mathcal G)\cup \overline{\chi}(\mathcal G)$.
\end{itemize}
\end{defi}
%------------------------------------------------------------------------
\begin{defi}[Characteristic points for nonlinearities]\label{defi:charac-point-nonlin}
Let $F_{0}\colon \R \times \R$ be such that \eqref{e:assum-F} holds true. 
  A point $P = (p_L,p_R)$ lies in $\underline \chi (F_{0})$ (resp. $\overline \chi (F_{0})$) if $P \in \underline \chi (\cG_{F_{0}})$  (resp. $\overline \chi (\cG_{F_{0}})$) { where we recall that $\mathcal G_{F_0}$ is defined in \eqref{eq::C23}}. 
\end{defi}

\subsection{The relaxation operator}

We now define the relaxation operator $\cR$. It associates with any function $F_0$ satisfying
\eqref{e:assum-F} a new function $\mathcal{R} F_0$. In order to define it, it is convenient to define
\[
\underline{H}(p_L,p_R):=\max\left\{H_{L,+}(p_L),H_{R,-}(p_R)\right\}
\]
with
\[ H_{L,+}(p_L):=\inf_{q_L\ge p_L} H_L(q_L) \quad \text{ and } \quad H_{R,-}(p_R):=\inf_{q_R\le p_R}H_R(q_R).
\]
If $F_0\ge \underline{H}$, then it is possible to define the Godunov relaxation of $F_0$ as the map $F_0G:\R^2\to \R$ where
\begin{equation}
  \label{eq::C6}
(F_0G)(p_L,p_R)=\lambda \quad \mbox{s.t. there exists $(q_L,q_R)\in \R^2$ s.t.}\quad g^{H_L}(p_L,q_L)= F_0(q_L,q_R)=g^{H_R}(q_R,p_R)=:\lambda
\end{equation}
where $g^{H_\alpha}$ are Godunov fluxes associated to each flux $H_\alpha$ (see \eqref{eq::C16}).
It is important to notice that there may be several admissible values of $Q=(q_L,q_R)$, but it is possible to show that the value of $\lambda$ is uniquely defined.
Following  \cite{FM}, we define for any $F_0$ satisfying \eqref{e:assum-F}

\begin{equation}\label{eq::C1}
\mathcal RF_0:= (\max\left\{H_0,\underline{H}\right\})G.
\end{equation}

We recall some properties of the relaxation operator.
\begin{pro}[Properties of the relaxation operator, \cite{FM}]\label{p:prop-relax}
  Let $F_0$ be continuous, non-decreasing in the first variable et non-increasing in the second one. Then $\mathcal R F_0$ satisfies \eqref{e:assum-F} and we have
  \begin{enumerate}[(i)]
  \item $\cR F_0 =F_0 \quad \mbox{on}\quad \left\{F_0=H_{L}=H_{R}\right\}$\label{p:pr-i},
  \item $\cR^2=\cR$,
  \item $\cR F_0  \ge \underline H$
  \end{enumerate}
\end{pro}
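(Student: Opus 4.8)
\textbf{Plan for proving Proposition~\ref{p:prop-relax}.}
The plan is to establish the three properties of the relaxation operator $\cR$ by working directly with the defining formula $\cR F_0 = (\max\{F_0,\underline H\})G$ and the characterisation~\eqref{eq::C6} of the Godunov relaxation. Throughout I would set $\tilde F := \max\{F_0,\underline H\}$, so that $\cR F_0 = \tilde F G$. The first task, preliminary to all three items, is to verify that $\cR F_0$ is well defined and satisfies~\eqref{e:assum-F}. For well-definedness I would check $\tilde F \ge \underline H$ (immediate from the definition of the max) so that the Godunov relaxation~\eqref{eq::C6} applies, and I would invoke the stated fact that the value $\lambda$ in~\eqref{eq::C6} is uniquely determined by $(p_L,p_R)$. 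For the structural properties~\eqref{e:assum-F} of $\cR F_0$ (Lipschitz continuity, monotonicity in each variable, coercivity), I would track how these properties of $\tilde F$ and of the Godunov fluxes $g^{H_\alpha}$ propagate through the relations $g^{H_L}(p_L,q_L) = \tilde F(q_L,q_R) = g^{H_R}(q_R,p_R) = \lambda$: since $g^{H_L}$ is non-decreasing in $p_L$ and $g^{H_R}$ is non-increasing in $p_R$, monotonicity of $\lambda$ in $(p_L,p_R)$ follows by a monotone-selection argument on the implicit system, and coercivity follows from the coercivity of the $H_\alpha$ encoded in $\underline H \le \tilde F$.

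For item~\eqref{p:pr-i}, I would argue that on the set $\{F_0 = H_L = H_R\}$ one has a fixed point of the relaxation. Let $(p_L,p_R)$ satisfy $F_0(p_L,p_R) = H_L(p_L) = H_R(p_R) =: \mu$. First I would check $F_0 \ge \underline H$ at this point, so that $\tilde F(p_L,p_R) = F_0(p_L,p_R) = \mu$; this uses that $H_L(p_L) \ge H_{L,+}(p_L)$ and $H_R(p_R) \ge H_{R,-}(p_R)$, whence $\underline H(p_L,p_R) \le \mu$. Then I would exhibit $(q_L,q_R) = (p_L,p_R)$ as an admissible choice in~\eqref{eq::C6}: indeed $g^{H_L}(p_L,p_L) = H_L(p_L) = \mu$, $g^{H_R}(p_R,p_R) = H_R(p_R) = \mu$, and $\tilde F(p_L,p_R) = \mu$, so all three quantities coincide. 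By the uniqueness of $\lambda$, this forces $\cR F_0(p_L,p_R) = \mu = F_0(p_L,p_R)$, which is exactly~\eqref{p:pr-i}.

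For item (iii), $\cR F_0 \ge \underline H$, I would observe that $\cR F_0 = \lambda = \tilde F(q_L,q_R) \ge \underline H(q_L,q_R)$ at the selected point $(q_L,q_R)$, and then transfer this lower bound to the argument $(p_L,p_R)$ using the Godunov relations together with monotonicity of $\underline H$ in each variable; more robustly, I expect the cleanest route is to show directly that $\lambda \ge H_{L,+}(p_L)$ and $\lambda \ge H_{R,-}(p_R)$ from $g^{H_L}(p_L,q_L) = \lambda$ and $g^{H_R}(q_R,p_R)=\lambda$ by unwinding the definition~\eqref{eq::C16} of the Godunov fluxes (a minimum/maximum over an interval is bounded below by the relevant infimum $H_{\alpha,\pm}$). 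For the idempotence $\cR^2 = \cR$ in item (ii), the strategy is to combine item~\eqref{p:pr-i} with a characterisation of the range of $\cR$: I would argue that every value $\cR F_0(p_L,p_R) = \lambda$ is attained at a triple where, at the selected $(q_L,q_R)$, the common value $\lambda$ equals $\tilde F(q_L,q_R)$ and is a fixed point of the Godunov procedure, so that $\cR F_0$ already satisfies $\cR F_0 = H_L = H_R$ ``on the relevant germ'' and applying~\eqref{p:pr-i} to $G := \cR F_0$ leaves it unchanged.

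The main obstacle I anticipate is item (ii), the idempotence $\cR^2 = \cR$, because it requires a genuine understanding of the range of the operator rather than a pointwise computation: I must show that $\cR F_0$ is a fixed point of $\cR$, which means verifying that $\cR F_0$ already coincides with $\max\{\cR F_0,\underline H\}$ (i.e. $\cR F_0 \ge \underline H$, which is item (iii), so (iii) should be proved first) \emph{and} that the Godunov relaxation of $\cR F_0$ returns $\cR F_0$ itself. The delicate part is that the selection $(q_L,q_R)$ in~\eqref{eq::C6} need not be unique, so I would need to argue that whichever admissible selection is used, the resulting common value $\lambda$ is genuinely a Godunov fixed point — i.e. that at the selected traces the left and right Godunov fluxes reproduce the same $\lambda$ when re-relaxed. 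Since the paper attributes this proposition to \cite{FM}, I would structure the proof to quote the uniqueness of $\lambda$ and the well-posedness of the selection from that reference, and reduce (ii) to the fixed-point identity via items~\eqref{p:pr-i} and (iii), keeping the monotonicity and coercivity bookkeeping as routine verifications.
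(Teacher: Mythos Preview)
The paper does not prove this proposition: it is stated with the attribution ``\cite{FM}'' (a reference marked \emph{in preparation}) and no argument is given in the present article. There is therefore no ``paper's own proof'' to compare your proposal against.

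That said, your sketch for items~\eqref{p:pr-i} and (iii) is correct and essentially complete: for (i), the choice $(q_L,q_R)=(p_L,p_R)$ is admissible in~\eqref{eq::C6} because $g^{H_\alpha}(p,p)=H_\alpha(p)$ and $\underline H(p_L,p_R)\le H_L(p_L)=H_R(p_R)=F_0(p_L,p_R)$, so uniqueness of $\lambda$ gives the conclusion; for (iii), your case analysis on the sign of $q_\alpha-p_\alpha$ in~\eqref{eq::C16} indeed yields $g^{H_L}(p_L,q_L)\ge H_{L,+}(p_L)$ and $g^{H_R}(q_R,p_R)\ge H_{R,-}(p_R)$. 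Your plan for (ii), however, has a real gap: applying item~\eqref{p:pr-i} to $G:=\cR F_0$ only tells you that $\cR G=G$ on the set $\{G=H_L=H_R\}=\cG_G$, not on all of $\R^2$, so reducing idempotence to (i) does not close the argument. What is actually needed is the operator identity $(FG)G=FG$ for all $F\ge\underline H$, i.e.\ that the Godunov relaxation is a projection; this requires a genuine composition argument with the Godunov fluxes $g^{H_\alpha}$ (roughly, that selecting $(q_L,q_R)$ for $(p_L,p_R)$ and then re-selecting for $(p_L,p_R)$ with respect to $FG$ returns the same $\lambda$), and it is precisely the content deferred to \cite{FM}.
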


\begin{rem}\label{rem::C8}
Notice that for $F_\varepsilon(p_L,p_R):=\varepsilon^{-1}(p_L-p_R)$, we have $\mathcal R F_\varepsilon\to F_0$ as $\varepsilon\to 0$, where
$$F_0(p_L,p_R):=g^{H_L}(p_L,z)=g^{H_R}(z,p_R)\quad \mbox{for some $z\in\R^2$}$$ 
which is exactly Diehl's condition (see   \cite[p.~28]{Diehl96}), obtained by vanishing viscosity. This shows that the natural relaxation operator  that we identified  in \eqref{eq::C6}, can be seen as a sort of generalization of Diehl's condition.
\end{rem}

%%%

\subsection{Maximality of germs associated to $\mathcal R F_0$}

\begin{lem}[Germs associated to $\mathcal R F_0$]\label{lem::C11}
Assume that $H^L,H^R$ satisfy \eqref{e:assum-H} and that $F_0$ satisfies \eqref{e:assum-F}.
Then the set $\mathcal G:=\mathcal G_{\mathcal R F_0}$ defined in \eqref{eq::C12} is a maximal  germ in the sense of Definitions \ref{defi:germ} and \ref{defi:max-germ}.
\end{lem}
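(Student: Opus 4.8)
The plan is to first check that $\mathcal G := \mathcal G_{\mathcal R F_0}$ is indeed a germ, and then that no germ strictly contains it. Writing $F := \mathcal R F_0$, Proposition~\ref{p:prop-relax} guarantees that $F$ satisfies \eqref{e:assum-F}, so in particular $F$ is continuous, non-decreasing in $p_L$ and non-increasing in $p_R$. For the Rankine--Hugoniot condition there is nothing to prove: it is built into the definition \eqref{eq::C12} of $\mathcal G_F$. For the dissipation condition, I would take $P = (p_L,p_R)$ and $P' = (p_L',p_R')$ in $\mathcal G$ and estimate
\[
D(P,P') = q_L(p_L,p_L') - q_R(p_R,p_R') = \sgn(p_L - p_L')\big(H_L(p_L) - H_L(p_L')\big) - \sgn(p_R - p_R')\big(H_R(p_R) - H_R(p_R')\big).
\]
Since $H_L(p_L) = F(p_L,p_R) =: \lambda$ and $H_L(p_L') = F(p_L',p_R') =: \lambda'$, the first term is $\sgn(p_L - p_L')(\lambda - \lambda')$; similarly the second term is $\sgn(p_R - p_R')(\lambda - \lambda')$, so $D(P,P') = \big(\sgn(p_L-p_L') - \sgn(p_R-p_R')\big)(\lambda - \lambda')$. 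Monotonicity of $F$ then forces the sign: if $\lambda > \lambda'$, then $F(p_L,p_R) > F(p_L',p_R')$ is incompatible with $p_L \le p_L'$ \emph{and} $p_R \ge p_R'$, so one cannot have $\sgn(p_L - p_L') = -1$ and $\sgn(p_R - p_R') = +1$ simultaneously; a short case analysis on the four sign combinations (handling the degenerate equalities separately) shows the bracket and $(\lambda-\lambda')$ always have the same sign, whence $D(P,P') \ge 0$. This is routine but is the place where assumption \eqref{e:assum-F} is used in an essential way.

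For maximality, suppose $\mathcal G' \supseteq \mathcal G$ is a germ and let $P = (p_L,p_R) \in \mathcal G'$; I must show $P \in \mathcal G$, i.e. $H_L(p_L) = H_R(p_R) = F(p_L,p_R)$. The Rankine--Hugoniot condition for $\mathcal G'$ already gives $H_L(p_L) = H_R(p_R) =: \mu$, so it remains to prove $\mu = F(p_L,p_R)$. The idea is to test the dissipation inequality of $\mathcal G'$ against well-chosen points of $\mathcal G$. Here I would invoke the structure of $F = \mathcal R F_0$ coming from the Godunov relaxation \eqref{eq::C6}: by definition of $(F_0 G)$ applied to $\max\{H_0,\underline H\}$, for the datum $(p_L,p_R)$ there is $(q_L,q_R)$ with $g^{H_L}(p_L,q_L) = g^{H_R}(q_R,p_R) = F(p_L,p_R) =: \lambda$ and $\max\{H_0,\underline H\}(q_L,q_R) = \lambda$, and moreover $(q_L,q_R) \in \mathcal G$ — this last fact, that the "inner" state produced by the Godunov relaxation lies in the germ $\mathcal G_F$, should follow from Proposition~\ref{p:prop-relax}\eqref{p:pr-i} (self-consistency $\mathcal R F_0 = F_0$ on $\{F_0 = H_L = H_R\}$, applied to $\mathcal R F_0$ itself using $\mathcal R^2 = \mathcal R$) together with $H_L(q_L) = H_R(q_R) = \lambda$. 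Now apply the dissipation inequality for $\mathcal G'$ to $P = (p_L,p_R)$ and $P' = (q_L,q_R) \in \mathcal G \subseteq \mathcal G'$: as computed above $D(P,P') = \sgn(p_L - q_L)(\mu - \lambda) - \sgn(p_R - q_R)(\mu - \lambda)$, but now the sign relations $\sgn(p_L - q_L)$ and $\sgn(p_R - q_R)$ are controlled by the Godunov identities $g^{H_L}(p_L,q_L) = \lambda = H_L(q_L)$ and $g^{H_R}(q_R,p_R) = \lambda = H_R(q_R)$: by definition of $g^{H_\alpha}$ these pin down on which side of $q_\alpha$ the value $p_\alpha$ sits relative to where $H_\alpha$ exceeds or drops below $\lambda$, and the non-degeneracy of $H_\alpha$ in \eqref{e:assum-H} rules out flat plateaus. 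Requiring $D(P,P') \ge 0$ and $D(P',P) \ge 0$ then forces $\mu = \lambda$, i.e. $H_L(p_L) = H_R(p_R) = F(p_L,p_R)$, so $P \in \mathcal G$.

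The main obstacle I anticipate is the second half: extracting from the Godunov identities \eqref{eq::C16}--\eqref{eq::C6} the precise information that for any $(p_L,p_R)$ with $H_L(p_L) = H_R(p_R) = \mu \neq \lambda$, the dissipation $D$ between $(p_L,p_R)$ and the relaxed inner state $(q_L,q_R)$ must be strictly negative in one of the two orderings. Concretely one has to argue that $g^{H_L}(p_L,q_L) = \lambda$ with $H_L(p_L) = \mu$ and $H_L(q_L) = \lambda$ determines $\sgn(p_L - q_L)$ (and likewise on the right), and then that the two sign determinations are \emph{incompatible} with dissipativity unless $\mu = \lambda$; this is exactly the kind of Riemann-fan bookkeeping that underlies the AKR notion of a "complete" germ, and it must be carried out carefully using only that $H_\alpha$ is continuous, coercive and non-degenerate. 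Everything else — the computation of $D$, the verification that $\mathcal G$ is a germ, and the bookkeeping of sign cases — is elementary.
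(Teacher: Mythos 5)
Your verification that $\mathcal G_{\mathcal R F_0}$ is a germ is correct and is exactly the paper's Step~1: Rankine--Hugoniot holds by definition, and the dissipation inequality follows from the monotonicity of $F=\mathcal R F_0$ via the identity $D(P,P')=\bigl(\sgn(p_L-p_L')-\sgn(p_R-p_R')\bigr)(\lambda-\lambda')$. Your maximality argument also follows the paper's strategy in spirit: take $P$ with $H_L(p_L)=H_R(p_R)=\mu\neq\lambda:=F(p_L,p_R)$ and contradict dissipativity by testing against a suitable point of $\mathcal G$ produced by the Godunov structure of $F=\mathcal R F_0$.

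There is, however, a genuine gap at precisely the step you flag as the ``main obstacle''. The inner state $(q_L,q_R)$ furnished by \eqref{eq::C6} satisfies $g^{H_L}(p_L,q_L)=F(q_L,q_R)=g^{H_R}(q_R,p_R)=\lambda$, but this does \emph{not} imply $H_L(q_L)=H_R(q_R)=\lambda$: the Godunov flux $g^{H_L}(p_L,q_L)$ is a min or max of $H_L$ over the interval between $p_L$ and $q_L$, possibly attained in the interior and not at the endpoint $q_L$. So the claim that $(q_L,q_R)\in\mathcal G$ is unjustified as stated, and the computation $D(P,Q)=\bigl(\sgn(p_L-q_L)-\sgn(p_R-q_R)\bigr)(\mu-\lambda)$ is not available in that form, since it uses $H_\alpha(q_\alpha)=\lambda$. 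The paper repairs exactly this point: assuming $\mu>\lambda$, monotonicity of the Godunov fluxes first forces $q_L>p_L$ and $q_R<p_R$ strictly; then $F(p_L,p_R)=F(q_L,q_R)=\lambda$ together with the monotonicities of $F$ forces $F\equiv\lambda$ on the whole rectangle $[p_L,q_L]\times[q_R,p_R]$; finally one selects the point $Q^*=(q_L^*,q_R^*)\in(p_L,q_L]\times[q_R,p_R)$ at which $H_L$ (moving right from $p_L$) and $H_R$ (moving left from $p_R$) first attain the value $\lambda$, so that $H_L(q_L^*)=H_R(q_R^*)=\lambda=F(Q^*)$, whence $Q^*\in\mathcal G$, with the strict inequalities $q_L^*>p_L$ and $q_R^*<p_R$. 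Testing against $Q^*$ then gives $D(P,Q^*)=-2(\mu-\lambda)<0$, the desired contradiction. This rectangle-plus-selection argument is the missing ingredient; once supplied, your outline becomes the paper's proof.
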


\begin{proof}
  %%%%%%%%%%%
The proof proceeds in two steps. 
  
\noindent {\sc Step 1: $\mathcal G$ is a germ.}
From Proposition \ref{p:prop-relax}, recall that $F:=\mathcal R F_0$ satisfies \eqref{e:assum-F}.
We then notice that by definition, the set $\mathcal G$ satisfies Rankine-Hugoniot relation, and that the monotonicity of $F$ implies the dissipation $D\ge 0$ on $\mathcal G\times \mathcal G$. Therefore $\mathcal G$ is a germ.\medskip

%%%%%%%%%%%
\noindent {\sc Step 2: $\mathcal G$ is maximal.}
Assume by contradiction that $\mathcal G$ is not maximal. Then there exists $\bar P=(\bar p_L,\bar p_R)\in \R^2\backslash \mathcal G$ such that 
\begin{equation}\label{eq::C15}
\hat {\mathcal G}:= \mathcal G \cup \left\{\bar P\right\}\quad \mbox{is a germ.}
\end{equation}
Hence
$$\bar \lambda:=H_L(\bar p_L)=H_R(\bar p_R) \not= F(\bar P)=:\lambda^*$$
From Proposition \ref{p:prop-relax}, recall that $F=\mathcal R F \ge \underline{H}$. Hence
by construction there exists $\bar Q=(\bar q_L,\bar q_R)\in\R^2$ such that
\[F(\bar p_L,{\bar p_R}){=\lambda^*}=g^{H_L}(\bar p_L,\bar q_L)=F(\bar q_L,\bar q_R)=g^{H_R}(\bar q_R,\bar p_R).\]

%%%%%%%%%%%
We first assume that  $\bar \lambda> \lambda^*$.
Then ${g^{H_R}(\bar p_R,\bar p_R)=}H_R(\bar p_R)=\bar \lambda> \lambda^* =F(\bar P)=g^{H_R}(\bar q_R,\bar p_R)${, which} implies that $\bar q_R< \bar p_R$. Similarly, we get $\bar  q_L>\bar p_L$. {Since $F(\bar p_L, \bar p_R)=F(\bar q_L, \bar q_R)$, }the monotonicities of $F$ imply that
$$F=const=\lambda^*\quad \mbox{on}\quad [\bar p_L,\bar q_L]\times [\bar q_R,\bar p_R]$$
In particular, the expressions of the Godunov fluxes $g^{H_\alpha}$ implies that there exists a unique 
$$Q^*:=(q_L^*,q_R^*)\in (\bar p_L,\bar q_L]\times [\bar q_R,\bar p_R)\quad \mbox{such that}\quad \left\{\begin{array}{l}
H_R> \lambda_*=H_R(q^*_R) \quad \mbox{on}\quad (q^*_R,\bar p_R]\\
H_L> \lambda_*=H_L(q^*_L) \quad \mbox{on}\quad [\bar p_L,q^*_L)\\
\end{array}\right.$$
Hence $Q^*\in \mathcal G \subset \hat {\mathcal G}$, and then
\begin{align*}
  0 & \le D(\bar P,Q^*)\\
  & =\mbox{sign}(\bar p_L-q^*_L)\cdot \left\{H_L(\bar p_L)-H_L(q^*_L)\right\}-\mbox{sign}(\bar p_R-q^*_R)\cdot \left\{H_R(\bar p_R)-H_R(q^*_R)\right\}\\
  & =-2(\bar \lambda-\lambda^*) \\
  & <0.
\end{align*}

Assume now that $\bar \lambda< \lambda^*$. We can argue as in the previous case and get a contradiction too.

Hence we conclude that \eqref{eq::C15} is false, and then $\mathcal G$ is maximal.
\end{proof}

\section{Criteria for viscosity and entropy solutions}
\label{s:criterion}

%---------------------------------------------------------------------------------
{We begin by defining the reduced set of test functions.}
\begin{defi}[Reduced set of test functions]\label{defi:reduction}
Let $F_{0}$ be such that \eqref{e:assum-F} holds true.  The  reduced set of test functions associated to $F_{0}$ for subsolutions (resp. supersolutions) is made of test functions $\varphi \in C^1_\wedge(Q_T)$ such that
$(\partial_x^L \varphi (t_0,0), \partial_x^R \varphi (t_0,0)) \in \underline\chi (F_{0})$ (resp. $(\partial_x^L \varphi (t_0,0), \partial_x^R \varphi (t_0,0)) \in \overline{\chi} (F_{0})$) for all $t_0 \in (0,T)$. It is denoted by $C^1_\wedge (Q_T,F_{0},SUB)$ (resp. $C^1_\wedge (Q_T,F_{0},SUP)$).
\end{defi}
%---------------------------------------------------------------------------------
{We now recall that for viscosity solutions, we have the following criterion.}
\begin{pro}[A criterion for $F_{0}$-sub- and $F_{0}$-super-solutions]\label{p:reduction}
  Let $u \colon Q_T \to \R$.
  \begin{itemize}
    \item If $u$ is upper-semi continuous, and $u$ is a (classical) viscosity sub-solution of \eqref{eq:hj} away from $x=0$, and for all $t_0 \in (0,T)$, 
\begin{equation}\label{eq::C9}
u (t_0,0) = \limsup_{(t,x) \to (t_0,0)} u (t,x),
\end{equation}
  and for all $\varphi \in C^1_\wedge(Q_T,F_{0},SUB)$ touching $u$ from above, we have
  \[ \partial_t \varphi + F_{0} (\partial_x^L \varphi, \partial_x^R \varphi) \le 0 \quad \text{ at } (t_0,0).\]
  Then $u$ is an $F_{0}$-sub-solution.
  \item If $u$ is lower-semi continuous and $u$ is a (classical) viscosity super-solution of \eqref{eq:hj} away from $x=0$, and for all $\varphi \in C^1_\wedge(Q_T,F_{0},SUP)$ touching $u$ from below, we have
  \[ \partial_t \varphi + F_{0} (\partial_x^L \varphi, \partial_x^R \varphi) \ge 0 \quad \text{ at } (t_0,0).\]
  Then $u$ is an $F_{0}$-super-solution.
\end{itemize}
\end{pro}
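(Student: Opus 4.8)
The plan is to prove Proposition~\ref{p:reduction} by reducing the verification of the full viscosity inequalities at $x=0$ to the reduced class of test functions, following the strategy already used in \cite{MR3621434,forcadel2023nonconvex}. I will only discuss the sub-solution case, the super-solution case being symmetric (one replaces $\underline\chi$ by $\overline\chi$, $\min$ by $\max$, and reverses inequalities). So let $u$ be upper semi-continuous, a classical sub-solution away from $x=0$, satisfying the upper-limit condition \eqref{eq::C9} at each $t_0\in(0,T)$, and assume the reduced inequality holds for all $\varphi\in C^1_\wedge(Q_T,F_0,SUB)$ touching $u$ from above. I must show that for an \emph{arbitrary} test function $\psi\in C^1_\wedge(Q_T)$ touching $u$ from above at $X_0=(t_0,0)$ one has $\partial_t\psi + \min\bigl(H_L(\partial_x^L\psi),H_R(\partial_x^R\psi),F_0(\partial_x^L\psi,\partial_x^R\psi)\bigr)\le 0$ at $X_0$.

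The key step is a \textbf{slope-modification argument}. Write $p_L=\partial_x^L\psi(X_0)$, $p_R=\partial_x^R\psi(X_0)$, $a=\partial_t\psi(X_0)$, and set $H_\alpha$-value $\lambda_\alpha=H_\alpha(p_\alpha)$. If already $a+H_L(p_L)\le 0$ or $a+H_R(p_R)\le 0$ we are done, so assume $a+H_L(p_L)>0$ and $a+H_R(p_R)>0$; it remains to prove $a+F_0(p_L,p_R)\le 0$. The idea is to deform $\psi$ near $X_0$ into a new test function $\tilde\psi$ which still touches $u$ from above at $X_0$, has the \emph{same} time derivative $a$ and the same value of $F_0$ at the junction, but whose lateral slopes $(\tilde p_L,\tilde p_R)$ lie in $\underline\chi(F_0)$. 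Concretely, one decreases $p_L$ to some $\tilde p_L\le p_L$ and increases $p_R$ to some $\tilde p_R\ge p_R$: decreasing the left slope only lowers $\psi$ for $x<0$ near $0$ and increasing the right slope only lowers it for $x>0$ near $0$, so the touching-from-above property is preserved in a one-sided neighbourhood (here one crucially uses that $u$ is a classical sub-solution strictly away from the origin, so we may localise). By the monotonicity of $F_0$ in \eqref{e:assum-F}, $F_0(\tilde p_L,\tilde p_R)\le F_0(p_L,p_R)$, so it suffices to get the inequality for the modified function; and one chooses $(\tilde p_L,\tilde p_R)$ as the first pair, scanning $p_L$ downward and $p_R$ upward, for which $H_L(\tilde p_L)=H_R(\tilde p_R)=F_0(\tilde p_L,\tilde p_R)$ with $H_\alpha$ exceeding this common value just inside — i.e. exactly the defining property of $\underline\chi(F_0)$ (Definition~\ref{defi:charac-point-germ}). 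The coercivity of $H_L,H_R$ and of $F_0$ guarantees such a pair exists before the slopes run off to infinity. One also has to handle the borderline case where the scan reaches the situation $F_0(\tilde p_L,\tilde p_R)=H_L(\tilde p_L)$ with $H_L$ attaining its infimum, using the continuity and non-degeneracy assumptions on the $H_\alpha$'s, plus the $\limsup$ condition \eqref{eq::C9} to select test functions with the right one-sided behaviour.

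Once $\tilde\psi\in C^1_\wedge(Q_T,F_0,SUB)$ is constructed and touches $u$ from above at $X_0$ (possibly after adding a small quadratic-in-$t$ correction and a term like $-|x|^2$ or a slope-wedge correction to make the touching strict and to respect the one-sided neighbourhoods), the reduced hypothesis gives $a+F_0(\tilde p_L,\tilde p_R)\le 0$, and then monotonicity of $F_0$ yields $a+F_0(p_L,p_R)\le 0$, which is what we wanted. Finally I would also need to address the points $X_0$ with $x_0\ne0$: there the statement is immediate because $u$ is assumed to be a classical viscosity sub-solution away from the origin. Assembling these pieces gives that $u$ is an $F_0$-sub-solution.

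The main obstacle I expect is the careful construction of the modified test function $\tilde\psi$: one must simultaneously (i) keep it in $C^1_\wedge$, (ii) keep it touching $u$ from above in a genuine neighbourhood of $X_0$ (not merely one-sidedly), and (iii) land the lateral slopes precisely in the set $\underline\chi(F_0)$ rather than merely on $\{H_L=H_R=F_0\}$ — the strict-inequality-just-inside condition in Definition~\ref{defi:charac-point-germ} is what makes the scanning/extremality argument delicate, especially when $H_\alpha$ is locally constant is excluded by \eqref{e:assum-H} but can still have flat-looking level structure near its inf. Handling this is where the non-degeneracy and coercivity hypotheses on $H_L,H_R$ and the monotonicity plus coercivity of $F_0$ all get used.
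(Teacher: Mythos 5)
The paper does not actually prove Proposition~\ref{p:reduction}; it is recalled from the earlier works \cite{MR3621434,forcadel2023nonconvex,FM}, so there is no in-paper argument to compare against. Judged on its own terms, your sketch identifies the right genre of argument (reduction of the test-function class by deforming the slopes at the junction), but it contains a directional error that breaks the core step. You propose to decrease $p_L$ to $\tilde p_L\le p_L$ and increase $p_R$ to $\tilde p_R\ge p_R$. This modification raises the test function on each side of the origin (for $x<0$ and $\tilde p_L\le p_L$ one has $\tilde p_L x\ge p_L x$, and symmetrically on the right), so touching from above is preserved \emph{trivially} and carries no new information about $u$. Worse, by the monotonicity in \eqref{e:assum-F} you then get $F_0(\tilde p_L,\tilde p_R)\le F_0(p_L,p_R)$, and from the reduced hypothesis $a+F_0(\tilde p_L,\tilde p_R)\le 0$ you cannot deduce the desired $a+F_0(p_L,p_R)\le 0$ --- the inequality points the wrong way. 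For the conclusion to follow by monotonicity you need $\tilde p_L\ge p_L$ and $\tilde p_R\le p_R$, i.e.\ the slopes must move in the direction that pushes the test function \emph{down} towards $u$.

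That is precisely where the missing ingredient lives: the \emph{critical slope lemma} of \cite{MR3621434}. One sets $\bar p_L=\sup\{p:\varphi(t,x)+px\ge u(t,x)$ near $X_0$ for $x\le 0\}\ge 0$ and $\underline p_R=\inf\{p:\varphi(t,x)+px\ge u(t,x)$ near $X_0$ for $x\ge 0\}\le 0$, and proves, using that $u$ is a classical sub-solution away from $x=0$ (contact points of the critically tilted test function accumulate at the junction from inside the half-lines) together with \eqref{eq::C9} and the coercivity of $H_\alpha$ to rule out $\bar p_L=+\infty$, that $a+H_L(p_L+\bar p_L)\le 0$ and $a+H_R(p_R+\underline p_R)\le 0$. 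It is these critically tilted slopes $p_L+\bar p_L\ge p_L$, $p_R+\underline p_R\le p_R$ (which increase $F_0$) that one then adjusts, using the structure of $\underline\chi(F_0)$ and the non-degeneracy of the $H_\alpha$'s, to land in the reduced class. Your scan, as described, never invokes the sub-solution property of $u$ away from the origin except as a vague localisation device, so it has no mechanism for producing a test function in $\underline\chi(F_0)$ whose inequality says anything about the original $(p_L,p_R)$. The proposal therefore has a genuine gap and, as written, the concluding monotonicity step is invalid.
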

%-----------------------------------------

\begin{rem}\label{rem::C10}
Notice the dissymmetry between subsolutions and supersolutions in the result of Proposition  \ref{p:reduction}, with assumption \eqref{eq::C9} only for subsolutions. This comes from the fact that the Hamiltonians $H^L,H^R$ are assumed to be coercive (last line of  condition \eqref{e:assum-H}).
\end{rem}

% -----------------------------------------------------------------------------------
\begin{pro}[Germs from characteristic points]\label{p:charac}
  Let $\mathcal{G}_F$ be a germ associated with a junction function $F= \mathcal{R} F_0$ and consider $P=(p_L,p_R) \in \R^2$
  satisfying $H_L(p_L) = H_R (p_R)$. If for all $Q \in \chi (\cG_F)$, we have $D(P,Q) \ge 0$, then $P \in \mathcal{G}_F$. 
\end{pro}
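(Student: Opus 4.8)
The plan is to show that if $P=(p_L,p_R)$ satisfies the Rankine--Hugoniot relation $H_L(p_L)=H_R(p_R)=:\lambda$ and dissipates against every characteristic point of $\cG_F$, then in fact $\cG_F\cup\{P\}$ is again a germ, so that maximality of $\cG_F$ (Lemma~\ref{lem::C11}) forces $P\in\cG_F$. Thus the core of the argument is to upgrade the hypothesis ``$D(P,Q)\ge 0$ for all $Q\in\chi(\cG_F)$'' into ``$D(P,Q)\ge 0$ for all $Q\in\cG_F$''. The key structural fact I would use is that for a complete, maximal germ of the form $\cG_F$ with $F=\cR F_0$, the ``level set'' $\cG_F\cap\{H_L=H_R=\lambda\}$ is, for each value $\lambda$, an interval-type set $[a_L(\lambda),b_L(\lambda)]\times[a_R(\lambda),b_R(\lambda)]$ intersected with $\{H_L=\lambda\}\times\{H_R=\lambda\}$ whose ``extreme'' points are precisely the characteristic points $\underline\chi(\cG_F)$ and $\overline\chi(\cG_F)$ living at that level (compare the explicit description of $Q^*$ in the proof of Lemma~\ref{lem::C11}). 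The dissipation $D(P,Q)$ is, by the definition of $q_\alpha$ via $\sgn$, a piecewise-affine-in-$\lambda$ expression that only ``sees'' on which side of $p_\alpha$ the point $q_\alpha$ lies; hence among all $Q\in\cG_F$ at a given level $\mu=H_L(q_L)=H_R(q_R)$, the minimum of $D(P,Q)$ is attained at one of these extreme/characteristic configurations.

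Concretely I would proceed as follows. First, fix $Q=(q_L,q_R)\in\cG_F$ and set $\mu:=H_L(q_L)=H_R(q_R)$. Using the monotonicity of $F$ and the definition of the Godunov fluxes $g^{H_\alpha}$, I would show that one may replace $q_L$ by the characteristic value $q_L^\sharp$ at level $\mu$ lying on the same side of $p_L$ (and similarly $q_R$ by $q_R^\sharp$) without increasing $D(P,\cdot)$: indeed $\sgn(p_L-q_L)\,(H_L(p_L)-H_L(q_L))=\sgn(p_L-q_L)(\lambda-\mu)$ depends on $q_L$ only through $\sgn(p_L-q_L)$, and moving $q_L$ monotonically toward $p_L$ can only flip that sign in the favorable direction or leave it unchanged, so $D$ does not decrease; the resulting $(q_L^\sharp,q_R^\sharp)$ can be arranged to be a characteristic point of $\cG_F$ (this is where the structure of $\cG_F=\cG_{\cR F_0}$, in particular $F\ge\underline H$ from Proposition~\ref{p:prop-relax}, is used, exactly as in the construction of $Q^*$ in Lemma~\ref{lem::C11}). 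Then $0\le D(P,Q^\sharp)\le D(P,Q)$ by hypothesis, so $D(P,Q)\ge 0$ for all $Q\in\cG_F$. Second, this means $\cG_F\cup\{P\}$ satisfies both the Rankine--Hugoniot condition (by assumption on $P$) and the dissipation condition, so it is a germ; by Lemma~\ref{lem::C11} the germ $\cG_F$ is maximal, whence $P\in\cG_F$.

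The main obstacle I anticipate is the first step: carefully justifying that for \emph{every} $Q\in\cG_F$ one can find a characteristic point $Q^\sharp\in\chi(\cG_F)$ at the relevant level with $D(P,Q^\sharp)\le D(P,Q)$. This requires a clean description of $\cG_F$ level-set by level-set and of where its characteristic points sit relative to the flat regions of $H_L,H_R$ — essentially that at each level the admissible left (resp. right) traces form an interval whose endpoints accessible ``from the correct side'' are characteristic — and then a short case analysis on the signs $\sgn(p_L-q_L)$, $\sgn(p_R-q_R)$ (four cases, two of which are symmetric to the $\bar\lambda>\lambda^*$ / $\bar\lambda<\lambda^*$ dichotomy already handled in Lemma~\ref{lem::C11}). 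The monotonicity of $g^{H_\alpha}$ and of $F$, together with $\cR F_0\ge\underline H$ and $\cR^2=\cR$, should make each case routine, but the bookkeeping of which characteristic point to pick in degenerate situations (e.g.\ when $p_\alpha$ itself lies in a flat region of $H_\alpha$, or when $\mu=\lambda$) is the delicate point to get right.
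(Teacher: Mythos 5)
Your overall strategy --- show that $D(P,Q)\ge 0$ for \emph{all} $Q\in\cG_F$, deduce that $\cG_F\cup\{P\}$ is a germ, and conclude by the maximality of $\cG_F$ (Lemma~\ref{lem::C11}) --- is a legitimate, purely germ-theoretic route, and it is genuinely different from the paper's proof: the paper instead views $P$ as the spatial gradient of the piecewise-linear function $u(t,x)=-\lambda t+p_{L/R}x$ and invokes the reduced-test-function criterion of Proposition~\ref{p:reduction} (test functions with slopes in $\underline\chi(F)$, $\overline\chi(F)$) to force $F(P)=\lambda$. However, the pivotal step of your plan is not closed, and the mechanism you sketch for it does not work as stated. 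For a general $Q=(q_L,q_R)\in\cG_F$ at level $\mu=H_L(q_L)=H_R(q_R)$, moving $q_L$ and $q_R$ to the nearest points of $\{H_L=\mu\}$ and $\{H_R=\mu\}$ on the correct sides of $p_L,p_R$ does produce a pair $(q_L^\sharp,q_R^\sharp)$ satisfying the \emph{local} conditions on $H_L,H_R$ in Definition~\ref{defi:charac-point-germ} and leaves the signs $\sgn(p_\alpha-q_\alpha)$ (hence the value of $D(P,\cdot)$) unchanged; but for $Q^\sharp$ to be a characteristic point \emph{of the germ} you must also show $F(q_L^\sharp,q_R^\sharp)=\mu$, and the monotonicity of $F$ only gives $F(q_L^\sharp,q_R^\sharp)\le F(q_L,q_R)=\mu$. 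In the proof of Lemma~\ref{lem::C11} the analogous membership is obtained from a rectangle on which $F$ is constant, and that rectangle is produced by the Godunov relaxation applied at the exceptional point $\bar P$ --- not at an arbitrary element $Q$ of the germ. So the claim ``$Q^\sharp$ can be arranged to be a characteristic point of $\cG_F$, exactly as in Lemma~\ref{lem::C11}'' is precisely the unproven part, and it is not routine. (Also note a small internal inconsistency: you announce that the replacement is ``without increasing $D$'' but then conclude ``so $D$ does not decrease''; only the former is what you need.)

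The good news is that your strategy can be repaired by running the argument at $P$ rather than at each $Q$: assume $P\notin\cG_F$, so that $\lambda:=H_L(p_L)=H_R(p_R)\ne F(P)=:\lambda^*$, and observe that the point $Q^*$ constructed in Step~2 of the proof of Lemma~\ref{lem::C11} is not merely an element of $\cG_F$ but belongs to $\overline\chi(\cG_F)$ (resp.\ $\underline\chi(\cG_F)$) by the very inequalities $H_L>\lambda^*$ on $[\,\bar p_L,q_L^*)$ and $H_R>\lambda^*$ on $(q_R^*,\bar p_R]$ defining it; the same computation then gives $D(P,Q^*)=-2|\lambda-\lambda^*|<0$, contradicting your hypothesis since $Q^*\in\chi(\cG_F)$. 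This bypasses the problematic ``level-by-level'' reduction entirely and yields a complete elementary proof, arguably more self-contained than the paper's viscosity-solution argument (which relies on Proposition~\ref{p:reduction}, recalled from other work). As written, though, your proposal has a genuine gap at its central step.
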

% ------------------------------------------------------------------------------------
\begin{rem} \label{r:weaker}
  Later we will use  this result, by assuming more, namely assuming that $D(P,Q) \ge 0$ for all $Q \in \cG_F$. 
\end{rem}
\begin{proof}
We consider the function $u \colon (0,+\infty) \times \R$ defined by
\[ u (t,x) :=
  \begin{cases}
    - \lambda t + p_L x & \text{ if } x >0, \\
    - \lambda t + p_R x & \text{ if } x<0
  \end{cases}
\]
with $\lambda = H_L(p_L) = H_R (p_R)$. The function $u$ is a viscosity
solution of \eqref{eq:hj} in $(0,+\infty) \times \R \setminus \{ 0\}$.
We claim that the assumption ensures that it is also a viscosity solution
at $x=0$, i.e.
\[ \lambda = F (p_L,p_R).\]
This precisely means that $P \in \mathcal{G}_F$.

We are thus left with checking that $u$ is a viscosity solution at $x=0$.
We only prove that it is a sub-solution, the other case being similar.
It is enough to consider a test-function $\varphi$ of the form
\[ \varphi (t,x) = \Psi (t) +  \begin{cases}
                     q_L x & \text{ if } x <0, \\  q_R x & \text{ if } x >0
                   \end{cases}
\]
with $(q_L,q_R) \in \underline{\chi} (F)$. We assume that
$u \le \varphi$ in $(0,+\infty) \times \R$ and $u (t_0,0) = \varphi (t_0,0)$ for some $t_0>0$.
In particular, we have
\[ \Psi' (t_0) = -\lambda, \quad q_L \le p_L, \quad q_R \ge p_R.\]
The assumption of the proposition ensures that
\[ \sgn (p_L - q_L) (H_L (p_L) - H_L (q_L)) \ge  \sgn (p_R-q_R) (H_R (p_R) - H_R (q_R)).\]
Keeping in mind that  $H_L (p_L) = H_R (p_R)=\lambda$ and $H_L (q_L) = H_R (q_R) = F (q_L,q_R)$,
if $p_L > q_L$ or $p_R < q_R$, then we get
\[  (\lambda - F (q_L,q_R)) \ge 0.\]
The result is also  true if $p_L=q_L$ and $p_R=q_R$ since $\lambda = F(p_L,p_R)$.
Since $\Psi'(t_0) = - \lambda$, we finally get the desired viscosity inequality: $\Psi'(t_0) + F (q_L,q_R) \le 0$. 
\end{proof}

\section{The numerical scheme for the Hamilton-Jacobi equation}
\label{s:hj-scheme}

Before proving the convergence of the numerical scheme for the scalar conservation law,
we study the one associated with the Hamilton-Jacobi equation. It is necessary to study
it first, since we will use it to prove the convergence of the numerical scheme for the
scalar conservation law in the case where the nonlinearity $F_0$ is not necessarily
relaxed (\textit{i.e.} $F_0 \neq \cR F_0$). 

\subsection{Stability}

%----------------------------------------------------------------------------------------
\begin{lem}[Stability of the numerical scheme]\label{l:stability}
  For all $t \ge 0$ and $x \in \R$, we have $|u_\Delta (t,x) -u_\Delta (0,x)| \le C_0 t$
  for
   $C_0= \max (C_L,C_R,C_{F_0})$ with $C_\alpha = \max_{|a| \le \|u_0\|_{\mathrm{Lip}}} |H_\alpha(a)|$ and
   $C_{F_0} = \max_{|a|,|b| \le \|u_0\|_{\mathrm{Lip}}}|F_0(a,b)|$.

   In particular, the function $u_\Delta$ is locally bounded in $L^\infty$, uniformly in $\Delta$. 
\end{lem}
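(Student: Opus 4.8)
The plan is to establish the bound by induction on $n$, exploiting that each updated value $u_j^{n+1}$ is obtained from $u_j^n$ by subtracting $\Delta t$ times a numerical Hamiltonian evaluated at discrete slopes that are themselves controlled by $\|u_0\|_{\mathrm{Lip}}$. First I would prove the auxiliary a priori estimate that the discrete slopes stay bounded: namely $\sup_{n,j}\,|v_{j+\frac12}^n| \le \|u_0\|_{\mathrm{Lip}}$. This follows because the scheme written in the form \eqref{e:monotone-scheme} is monotone (under the CFL condition \eqref{e:CFL}), so it propagates the ordering of discrete data; comparing $(v_{j+\frac12}^0)_j$ with the constant states $\pm\|u_0\|_{\mathrm{Lip}}$ — which are stationary solutions of the SCL scheme since $\mathcal{F}_j(c,c,c)=c$ for any constant $c$ — and invoking the monotonicity of $\mathcal{F}_j$ in all three arguments, we get $-\|u_0\|_{\mathrm{Lip}} \le v_{j+\frac12}^n \le \|u_0\|_{\mathrm{Lip}}$ for all $n,j$. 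Note $v_{j+\frac12}^0 \in [-\|u_0\|_{\mathrm{Lip}},\|u_0\|_{\mathrm{Lip}}]$ by \eqref{e:v0-discrete} and the Lipschitz bound on $u_0$.

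Next, with the slope bound in hand, I would estimate one time step. From the three cases of \eqref{eq:scheme-HJ}, for every $j$,
\[
|u_j^{n+1}-u_j^n| = \Delta t \,\bigl| f_j\bigl(v_{j-\frac12}^n,v_{j+\frac12}^n\bigr)\bigr| \le \Delta t \, C_0,
\]
because $v_{j\pm\frac12}^n \in [-\|u_0\|_{\mathrm{Lip}},\|u_0\|_{\mathrm{Lip}}]$ and, by \eqref{eq::C16} together with the definitions of $C_L,C_R,C_{F_0}$, the numerical Hamiltonians $g^{H_L}, g^{H_R}$ and $F_0$ take values of modulus at most $C_L, C_R, C_{F_0}$ respectively on that box (for $g^{H_\alpha}$ this uses that its value is a min or max of $H_\alpha$ over a subinterval of $[-\|u_0\|_{\mathrm{Lip}},\|u_0\|_{\mathrm{Lip}}]$). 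Telescoping over the time steps between $0$ and $t_n$ gives $|u_j^n - u_j^0| \le n\,\Delta t\, C_0 = t_n C_0$. Finally, I would transfer this from the grid values to the interpolated function $u_\Delta$: since $u_\Delta(t,x)$ is, for $t\in[t_n,t_{n+1})$ and $x\in[x_j,x_{j+1})$, a convex combination of $u_j^n$ and $u_{j+1}^n$ (by \eqref{e:u-delta}), and similarly $u_\Delta(0,x)$ is a convex combination of $u_j^0$ and $u_{j+1}^0$ with the \emph{same} weight $(x-x_j)/\Delta x$, we get $|u_\Delta(t,x)-u_\Delta(0,x)| \le \max(|u_j^n-u_j^0|,|u_{j+1}^n-u_{j+1}^0|) \le t_n C_0 \le t\,C_0$, which is the claimed estimate.

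For the last assertion, observe that $u_\Delta(0,\cdot)$ is the linear interpolation of $u_0$ on the grid, hence $\|u_\Delta(0,\cdot)\|_{L^\infty(K)} \le \|u_0\|_{L^\infty(K')}$ on any compact $K$ (with $K'$ a fixed neighbourhood), uniformly in $\Delta$; combined with $|u_\Delta(t,x)-u_\Delta(0,x)|\le C_0 t$ and the fact that $C_0$ depends only on $\|u_0\|_{\mathrm{Lip}}$ and on $H_L,H_R,F_0$ (not on $\Delta$), this yields a local $L^\infty$ bound on $u_\Delta$ uniform in $\Delta$. The main obstacle — really the only nontrivial point — is the uniform slope bound $\sup_{n,j}|v_{j+\frac12}^n|\le \|u_0\|_{\mathrm{Lip}}$: one must be careful that the monotonicity of $\mathcal{F}_j$ in its first and third arguments genuinely holds (this is where the monotonicity hypotheses on $F_0$ in \eqref{e:assum-F} enter, via \eqref{e:def-fj}), and that monotonicity in the middle argument holds under \eqref{e:CFL}; granting the comparison principle for the monotone scheme \eqref{e:monotone-scheme}, the rest is the routine telescoping and interpolation argument sketched above.
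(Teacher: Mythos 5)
Your reduction to the a priori slope bound $\sup_{n,j}|v^n_{j+\frac12}|\le \|u_0\|_{\mathrm{Lip}}$ is where the proof breaks. The justification you give --- that the constants $\pm\|u_0\|_{\mathrm{Lip}}$ are stationary discrete solutions because $\mathcal F_j(c,c,c)=c$ --- fails in the two cells adjacent to the junction. Indeed, from \eqref{eq:scheme-SCL} and \eqref{e:def-fj},
\[
\mathcal F_{-1}(c,c,c)=c-\tfrac{\Delta t}{\Delta x}\bigl(F_0(c,c)-H_L(c)\bigr),
\qquad
\mathcal F_{0}(c,c,c)=c-\tfrac{\Delta t}{\Delta x}\bigl(H_R(c)-F_0(c,c)\bigr),
\]
and there is no reason why $F_0(c,c)$ should equal $H_L(c)$ or $H_R(c)$: the junction datum $F_0$ is a genuinely independent nonlinearity. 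The slope bound itself is false, not merely its justification: take $u_0\equiv 0$, $H_L=H_R=H$ with $H(0)=0$ and $F_0(0,0)=A>0$; then after one step $v^1_{\frac12}=\frac{\Delta t}{\Delta x}A>0=\|u_0\|_{\mathrm{Lip}}$, and this excess is of order $A\,\Delta t/\Delta x$, which does not vanish under the CFL condition. Consequently your one-step estimate $|u_j^{n+1}-u_j^n|\le \Delta t\,C_0$ is unavailable for $n\ge1$, since $f_j$ is then evaluated at slopes outside the box on which the constants $C_L,C_R,C_{F_0}$ control it.

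The paper's induction is designed precisely to avoid any bound on the slopes at positive times. Writing $u_j^{n+1}=\mathcal H_j(u_{j-1}^n,u_j^n,u_{j+1}^n)$, one uses that $\mathcal H_j$ is monotone in its three arguments (this is where the CFL condition and the monotonicities of $F_0$ enter) and commutes with the addition of a constant to all arguments. The induction hypothesis $u_k^n\le u_k^0+C_0t_n$ for all $k$ then gives
\[
u_j^{n+1}\le \mathcal H_j(u_{j-1}^0+C_0t_n,\,u_j^0+C_0t_n,\,u_{j+1}^0+C_0t_n)
=u_j^0+C_0t_n-\Delta t\, f_j\bigl(v^0_{j-\frac12},v^0_{j+\frac12}\bigr)\le u_j^0+C_0t_{n+1},
\]
so that $f_j$ is only ever evaluated at the \emph{initial} slopes, which are bounded by $\|u_0\|_{\mathrm{Lip}}$ by \eqref{e:v0-discrete}. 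Your bound on $g^{H_\alpha}$ and $F_0$ over the box $[-\|u_0\|_{\mathrm{Lip}},\|u_0\|_{\mathrm{Lip}}]^2$ and your final interpolation step are fine; it is the propagation of the Lipschitz bound to later times that must be abandoned and replaced by this comparison argument on the $u$-variables.
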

%----------------------------------------------------------------------------------------
\begin{proof}
  Since $u_\Delta$ is constant in time on intervals $[t_n,t_{n+1})$, it is enough to prove
  that $|u_\Delta (t_n, x) - u_\Delta (0,x) | \le C_0 t_n$. We prove it by induction on $n$.

  We only prove $u_\Delta (t_n,x) \le u_\Delta (0,x) + C_0 t_n$ since the other inequality
  can be proved in the same way. 
  It is true for $n=0$. We assume it is true for $n \ge 0$ and we prove it for $n+1$.
  In order to do so, we combine the induction assumption with the monotonicity of the scheme.
  Recalling the definition of $f_j$, see \eqref{e:def-fj}, we have 
   \begin{align*}
    u_j^{n+1} & = u_j^n + (\Delta t) f_j (v_{j-\frac12}^n, v_{j+\frac12}^n) \\
    & =: \mathcal H_j(u_{j-1}^n,u_j^n,u_{j+1}^n)\\
    & \le \mathcal H_j(u_{j-1}^0+C_0 t_n,u_j^0+C_0 t_n,u_{j+1}^0+C_0 t_n)\\
              & = (u^0_j + C_0 t_n) + (\Delta t) f_j (v^0_{j-\frac12},v^0_{j+\frac12}) \\
    & \le (u^0_j + C_0 n \Delta t) + C_0 (\Delta t). 
  \end{align*}
where in the third line, we have used the fact that the numerical scheme $\mathcal H_j$ for HJ equation is monotone under our CFL condition.
We conclude that $u_\Delta (t_{n+1},x) \le u_\Delta (0,x) + C_0 t_{n+1}$. 
\end{proof}

\subsection{Consistency}

The following lemma is very classical and straightforward. We skip the proof. 
\begin{lem}[Consistency of the numerical scheme]\label{l:consistency}
   Let \eqref{e:CFL} hold true. Let $(t,x) \in (0,T) \times \R$ and $\phi \in C^1_\wedge(Q_T)$ for some $T>0$.
  Assume that there exists $(t_\Delta,x_\Delta) = (n \Delta t,j \Delta x) \to (t,x)$ as $\Delta x \to 0$
  such that
  \begin{equation}\label{e:phi-eq}
    \frac{\phi (t_\Delta + \Delta t,x_\Delta) - \phi (t_\Delta,x_\Delta)}{\Delta t} 
    + f_{j} \left(\frac{\phi(t_\Delta,x_\Delta)-\phi (t_\Delta, x_\Delta-\Delta x)}{\Delta x},
    \frac{\phi(t_\Delta,x_\Delta+\Delta x)-\phi (t_\Delta, x_\Delta)}{\Delta x}\right) \le 0
\end{equation}
where $f_j$ is defined in \eqref{e:def-fj}.
\begin{itemize}
\item If $x >0$, then $\phi_t  + H_R (\phi_x ) \le 0$ at $(t,x)$. 
\item If $x<0$, then $\phi_t  + H_L (\phi_x ) \le 0$ at $(t,x)$. 
\item If $x=0$, then $\phi_t  + \min (H_R(\phi_x^R),H_L(\phi_x^L),F_0 (\phi_x^L,\phi_x^R) \le 0$ at $(t,x)$. 
\end{itemize}
\end{lem}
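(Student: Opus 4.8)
The plan is to prove the consistency lemma (Lemma~\ref{l:consistency}) in the standard way for monotone finite-difference schemes, by Taylor expansion of the test function $\phi$ around the limiting point. First I would fix $(t,x)$ and the sequence $(t_\Delta,x_\Delta)=(n\Delta t,j\Delta x)\to(t,x)$ satisfying the discrete inequality~\eqref{e:phi-eq}, and treat the three cases ($x>0$, $x<0$, $x=0$) according to the value of $j$ for small $\Delta x$.

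\textbf{Case $x\neq 0$.} When $x>0$, for $\Delta x$ small enough the point $x_\Delta$ and its neighbours $x_\Delta\pm\Delta x$ all lie in $(0,+\infty)$, so $f_j=g^{H_R}$ in~\eqref{e:phi-eq}. Here $\phi$ is $C^1$ near $(t,x)$ on the closed right half-plane, so Taylor expansion gives
\[
\frac{\phi(t_\Delta+\Delta t,x_\Delta)-\phi(t_\Delta,x_\Delta)}{\Delta t}=\phi_t(t_\Delta,x_\Delta)+o(1),
\]
\[
\frac{\phi(t_\Delta,x_\Delta)-\phi(t_\Delta,x_\Delta-\Delta x)}{\Delta x}=\phi_x(t_\Delta,x_\Delta)+o(1),\qquad
\frac{\phi(t_\Delta,x_\Delta+\Delta x)-\phi(t_\Delta,x_\Delta)}{\Delta x}=\phi_x(t_\Delta,x_\Delta)+o(1).
\]
Since $g^{H_R}$ is Lipschitz (hence continuous) and $g^{H_R}(p,p)=H_R(p)$, passing to the limit $\Delta x\to0$ in~\eqref{e:phi-eq} and using continuity of $\phi_t,\phi_x$ yields $\phi_t+H_R(\phi_x)\le 0$ at $(t,x)$. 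The case $x<0$ is identical with $H_R$ replaced by $H_L$.

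\textbf{Case $x=0$.} Now $j=0$ for the relevant indices, so $f_0=F_0$ and the two difference quotients in~\eqref{e:phi-eq} sample $\phi$ on the left of the origin (giving $\partial_x^L\phi(t_\Delta,0)+o(1)$ because $\phi$ restricted to the left half-plane is $C^1$) and on the right of the origin (giving $\partial_x^R\phi(t_\Delta,0)+o(1)$), respectively; the time difference quotient again tends to $\phi_t(t_\Delta,0)$. Using that $F_0$ is Lipschitz (hence continuous) and that $\partial_t\phi,\partial_x^L\phi,\partial_x^R\phi$ are continuous, letting $\Delta x\to0$ gives $\phi_t+F_0(\partial_x^L\phi,\partial_x^R\phi)\le 0$ at $(t,0)$. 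Since $\min(H_R(\partial_x^R\phi),H_L(\partial_x^L\phi),F_0(\partial_x^L\phi,\partial_x^R\phi))\le F_0(\partial_x^L\phi,\partial_x^R\phi)$, the claimed inequality with the minimum follows a fortiori.

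There is essentially no serious obstacle here; the only mild point to be careful about is that at $x=0$ the one-sided derivatives $\partial_x^L\phi$ and $\partial_x^R\phi$ may differ, so one must keep the left and right difference quotients separate and invoke the $C^1_\wedge$ regularity of test functions on each closed half-plane rather than a two-sided Taylor expansion; also one should note that the CFL condition~\eqref{e:CFL} is not actually needed for consistency (it is listed only for uniformity with the other lemmas), the estimates above being purely a matter of $o(1)$ remainders in Taylor's formula together with continuity of $g^{H_\alpha}$ and $F_0$. This is why the paper states it is "very classical and straightforward" and skips the proof.
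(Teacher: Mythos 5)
Your treatment of the cases $x>0$ and $x<0$ is the standard and correct Taylor-expansion argument (the paper skips the proof precisely because this part is routine). The gap is in the case $x=0$: you assert that ``$j=0$ for the relevant indices'', but this does not follow from $x_\Delta=j\Delta x\to 0$. The index $j$ may well be nonzero with $j\Delta x\to 0$ (e.g.\ $j=1$, or $j\to\pm\infty$ slowly), and this is exactly why the conclusion at $x=0$ is stated with $\min\bigl(H_R(\phi_x^R),H_L(\phi_x^L),F_0(\phi_x^L,\phi_x^R)\bigr)$ rather than with $F_0$ alone. The correct argument extracts a subsequence along which the sign of $j$ is constant and treats three subcases: if $j\ge 1$ along the subsequence, then $f_j=g^{H_R}$ and both difference quotients sample $\phi$ on $[0,+\infty)$, so the limit gives $\phi_t+H_R(\phi_x^R)\le 0$; symmetrically $j\le -1$ gives $\phi_t+H_L(\phi_x^L)\le 0$; and only the subcase $j=0$ gives $\phi_t+F_0(\phi_x^L,\phi_x^R)\le 0$. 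In each subcase one of the three quantities in the minimum is $\le -\phi_t$, which is what the lemma claims. Your closing remark that the minimum inequality ``follows a fortiori'' from the $F_0$ inequality inverts the logic: the minimum is not a gratuitous weakening but the honest common conclusion of the three possible behaviours of $j$, and in general the $F_0$ inequality itself cannot be obtained.

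A second, minor point: the CFL condition is not entirely decorative here. The hypothesis is ``as $\Delta x\to 0$'', and one needs $\Delta t\to 0$ as well for the time difference quotient to converge to $\phi_t$; this is supplied by $\Delta t\le \Ccfl\,\Delta x$. With the subsequence argument at $x=0$ added and this observation, your proof is complete.
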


\subsection{Convergence of the scheme}

  We checked that the numerical scheme is monotone (thanks to the CFL condition~\eqref{e:CFL}),  stable (by  Lemma~\ref{l:stability}) and consistent (Lemma~\ref{l:consistency}). It is then known \cite{MR1115933} that it converges towards the unique
  solution of \eqref{eq:hj}-\eqref{eq:ic-hj}. Let us give some details for the reader's convenience. 
  \begin{proof}[Proof of Theorem~\ref{t:num-scheme-hj}]
    Let $u^+$ denote the upper relaxed limit of $u_\Delta$ as $\Delta x \to 0$ (recall \eqref{e:CFL}).
    Thanks to the stability of the scheme (Lemma~\ref{l:stability}), we know that $u^+$ is finite and
    $u^+(0,x) \le u_0 (x)$. Let us prove that it is a $F_{0}$-sub-solution of \eqref{eq:hj} in $(0,T) \times \R$ for all $T>0$.
    In order to do so, let $T>0$ and $\phi \in C^1_\wedge(Q_T)$ touching $u^+$ from above at $(t,x)$. We can assume without loss
    of generality that the contact is strict. We thus know that there exists $(t_\Delta, x_\Delta)$ such that
    \[u_\Delta -u_\Delta (t_\Delta,x_\Delta) \le \phi  - \phi (t_\Delta, x_\Delta).\]
    Let $\psi =   \phi + C_\Delta$ with $C_\Delta = u_\Delta (t_\Delta,x_\Delta) - \phi (t_\Delta, x_\Delta)$. 
    The monotonicity of the scheme implies that \eqref{e:phi-eq} holds true for $\psi$, and thus for $\phi$.
    Then Lemma~\ref{l:consistency} allows us to get the viscosity inequality.

    Analogously, we can prove that the lower relaxed limit $u_-$ of $u_\Delta$ as $\Delta x \to 0$ is a $F_{0}$-super-solution of \eqref{eq:hj}
    and $u_-(0,x) \ge u_0(x)$. The comparison principle (see \cite{FM}) then implies that $u_+ \le u_-$ and this implies that $u_\Delta$ converges
    locally uniformly towards $u$. 
\end{proof}

\section{The numerical scheme for the scalar conservation law}
\label{s:scl-scheme}

\subsection{Maximum principle}

It is classical that a monotone scheme enjoys a maximum principle. 
\begin{lem}[Maximum principle]\label{l:maximum}
We have: $\|v_\Delta \|_{L^\infty ((0,+\infty) \times \R)} \le \|v_0 \|_{L^\infty (\R)}$.
\end{lem}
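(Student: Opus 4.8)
The plan is to exploit the monotonicity of the scheme $\mathcal{F}_j$ (valid under the CFL condition~\eqref{e:CFL}) together with the fact that the numerical fluxes are consistent, i.e. $g^{H_\alpha}(p,p)=H_\alpha(p)$ and $F_0(p,p)=\ldots$ — more precisely, that constants are fixed points of the scheme. First I would record the key observation: if $v^n_{j+1/2}\equiv c$ is the constant sequence for some $c\in\R$, then by consistency $f_{j+1}(c,c)=f_j(c,c)$ for every $j$ (indeed $g^{H_\alpha}(c,c)=H_\alpha(c)$ on each side, and the flux value is continuous across $j=0$ only in the sense that does not matter here, since $f_{j+1}(c,c)-f_j(c,c)$ telescopes to zero away from the junction and equals $H_\alpha(c)-H_\alpha(c)=0$ or $H_\alpha(c)-F_0(c,c)$ and... ) — in fact the cleanest statement is simply $\mathcal{F}_j(c,c,c)=c$ for all $j$, which holds because the HJ scheme $\mathcal{H}_j$ fixes affine functions $u^n_j = a + cj\Delta x$ at the level of $u$, hence fixes $v\equiv c$ at the level of the differences. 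This is immediate from the definition \eqref{eq:scheme-HJ}.

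Next I would set $M := \|v_0\|_{L^\infty(\R)}$ and prove by induction on $n$ that $-M \le v^n_{j+1/2}\le M$ for all $j\in\Z$. For $n=0$ this follows from the averaging formula \eqref{e:v0-discrete}: $v^0_{j+1/2}$ is an average of $v_0$ over $[x_j,x_{j+1}]$, hence lies in $[-M,M]$. For the inductive step, assume the bound holds at level $n$. Using \eqref{e:monotone-scheme}, the monotonicity of $\mathcal{F}_j$ in each of its three arguments (the first and third by the monotonicity assumptions~\eqref{e:assum-F} on $F_0$ and the monotonicity of $g^{H_\alpha}$, the middle one thanks to the CFL condition~\eqref{e:CFL}), and the fixed-point property, we obtain
\[
v^{n+1}_{j+\frac12} = \mathcal{F}_j\bigl(v^n_{j-\frac12},v^n_{j+\frac12},v^n_{j+\frac32}\bigr)
\le \mathcal{F}_j(M,M,M) = M,
\]
and symmetrically $v^{n+1}_{j+1/2}\ge \mathcal{F}_j(-M,-M,-M)=-M$. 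This closes the induction.

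Finally, since $v_\Delta$ defined in \eqref{e:v-delta} takes, on each cell $[t_n,t_{n+1})\times[x_j,x_{j+1})$, the value $v^n_{j+1/2}$, the uniform bound $|v^n_{j+1/2}|\le M$ for all $n,j$ immediately gives $\|v_\Delta\|_{L^\infty((0,+\infty)\times\R)}\le M = \|v_0\|_{L^\infty(\R)}$.

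I do not expect any real obstacle here — the lemma is the textbook maximum principle for monotone conservative schemes, and the only points requiring a word of care are (a) justifying that constants are preserved by $\mathcal{F}_j$ even at the junction index $j=0$, which follows from the affine-invariance of the HJ scheme, and (b) checking that the CFL condition~\eqref{e:CFL} as stated (with the factor $2\mathfrak{L}$) is exactly what makes $\mathcal{F}_j$ monotone in its middle argument, which the paper has already asserted in the discussion preceding \eqref{e:CFL}.
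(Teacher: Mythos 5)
Your strategy (monotonicity of $\mathcal{F}_j$ in all three arguments under the CFL condition, plus induction on $n$ starting from the averaging formula \eqref{e:v0-discrete}) is exactly the argument the paper itself gives, and the monotonicity part is handled correctly. The gap is in the fixed-point claim $\mathcal{F}_j(c,c,c)=c$ ``for all $j$'', which you justify by affine-invariance of the HJ scheme. From \eqref{eq:scheme-SCL} one has
\[
\mathcal{F}_j(c,c,c)=c-\frac{\Delta t}{\Delta x}\bigl(f_{j+1}(c,c)-f_j(c,c)\bigr),
\]
and by \eqref{e:def-fj} the bracket vanishes for $j\le -2$ and $j\ge 1$, but at the two cells adjacent to the junction
\[
\mathcal{F}_{-1}(c,c,c)=c-\frac{\Delta t}{\Delta x}\bigl(F_0(c,c)-H_L(c)\bigr),\qquad
\mathcal{F}_{0}(c,c,c)=c-\frac{\Delta t}{\Delta x}\bigl(H_R(c)-F_0(c,c)\bigr),
\]
and nothing in \eqref{e:assum-F} forces $F_0(c,c)=H_L(c)=H_R(c)$. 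At the HJ level the affine data $u^0_j=c\,x_j$ is translated by $-\Delta t\,H_L(c)$, $-\Delta t\,F_0(c,c)$ and $-\Delta t\,H_R(c)$ on the three index regions, so the discrete gradient is \emph{not} preserved in the cells $j+\frac12=\pm\frac12$; you in fact noticed the leftover term $H_\alpha(c)-F_0(c,c)$ in your first paragraph and then dismissed it. Concretely, take $H_L(p)=H_R(p)=|p|$, $F_0(p_L,p_R)=2\bigl((p_L)_++(p_R)_-\bigr)$ (which satisfies \eqref{e:assum-F}) and $v_0\equiv 1$: then $v^1_{\frac12}=1+\frac{\Delta t}{\Delta x}>1=\|v_0\|_{L^\infty}$, so the inductive step $\mathcal{F}_j(M,M,M)\le M$ fails, and since $v^n_{\frac12}$ feeds into the neighbouring cells the failure propagates.

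To close the induction one needs $H_L(\pm M)$, $H_R(\pm M)$ and $F_0(\pm M,\pm M)$ to be ordered so that $\mathcal{F}_{-1}(\pm M,\pm M,\pm M)$ and $\mathcal{F}_{0}(\pm M,\pm M,\pm M)$ stay in $[-M,M]$ (for instance $H_L(M)\le F_0(M,M)\le H_R(M)$ for the upper bound), which is an extra compatibility condition not contained in \eqref{e:assum-F}; otherwise the $L^\infty$ bound at the junction must be obtained by another route, e.g.\ as a discrete gradient bound for the HJ scheme exploiting the coercivity of $H_L$, $H_R$ and $F_0$, at the price of a constant larger than $\|v_0\|_{L^\infty}$. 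To be fair, the paper's own proof is the same one-line ``monotonicity plus induction'' and is equally silent on the junction cells, so you have reproduced the intended argument faithfully; but as written neither version closes at $j\in\{-1,0\}$.
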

\begin{proof}
  Let $M_0= \|v_0 \|_{L^\infty (\R)}$. The monotonicity of the scheme
  implies that $|v^n_j|\le M_0$ by arguing by induction on $n$. 
\end{proof}

\subsection{Discrete entropy inequalities and $L^1$-contraction}

An immediate consequence of the monotonicity of the scheme is the fact that the maximum of
two discrete sub-solutions is still a discrete sub-solution.
%-----------------------------------------------------------------
\begin{lem}[Maximum of discrete sub-solutions]\label{l:maximum-sub}
  Let $v^n_{j+\frac12}$ and $w^n_{j+\frac12}$ be such that
  \begin{align*}
    v^{n+1}_{j+\frac12} & \le \mathcal{F}_j (v^n_{j-\frac12}, v^n_{j+\frac12}, v^n_{j+\frac32}) \\
    w^{n+1}_{j+\frac12}  & \le \mathcal{F}_j (w^n_{j-\frac12}, w^n_{j+\frac12}, w^n_{j+\frac32}) 
  \end{align*}
  where $\mathcal{F}_j$ denotes the monotone scheme, see \eqref{e:monotone-scheme}. 
Then $V^n_{j+\frac12} = \max (v^n_{j+\frac12}, w^n_{j+\frac12})$ satisfies the same inequality. 
\end{lem}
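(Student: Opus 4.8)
The proof rests entirely on the monotonicity of the scheme. The plan is first to record that, under the CFL condition~\eqref{e:CFL} and the monotonicity assumptions on $F_0$ in~\eqref{e:assum-F}, the map $\mathcal{F}_j$ appearing in~\eqref{e:monotone-scheme} is non-decreasing in \emph{each} of its three arguments: monotonicity in the first and third arguments comes from the fact that $g^{H_\alpha}$ is non-decreasing in its first variable and non-increasing in its second (see the remark after~\eqref{eq::C16}) together with the sign conventions built into $f_{j\pm1}$ in~\eqref{e:def-fj} and, at $j=0$, the monotonicity of $F_0$ from~\eqref{e:assum-F}; monotonicity in the middle argument is exactly what~\eqref{e:CFL} guarantees. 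This is the same observation already used in the proof of Lemma~\ref{l:stability}.

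Next I would simply note that $V^n_{k+\frac12} := \max(v^n_{k+\frac12}, w^n_{k+\frac12}) \ge v^n_{k+\frac12}$ and $V^n_{k+\frac12} \ge w^n_{k+\frac12}$ for every index $k$, in particular for $k = j-1, j, j+1$. Applying the two discrete sub-solution inequalities in the statement and then the monotonicity of $\mathcal{F}_j$ in all three slots, one gets
\begin{align*}
v^{n+1}_{j+\frac12} &\le \mathcal{F}_j(v^n_{j-\frac12}, v^n_{j+\frac12}, v^n_{j+\frac32}) \le \mathcal{F}_j(V^n_{j-\frac12}, V^n_{j+\frac12}, V^n_{j+\frac32}), \\
w^{n+1}_{j+\frac12} &\le \mathcal{F}_j(w^n_{j-\frac12}, w^n_{j+\frac12}, w^n_{j+\frac32}) \le \mathcal{F}_j(V^n_{j-\frac12}, V^n_{j+\frac12}, V^n_{j+\frac32}).
\end{align*}
Taking the maximum of the two left-hand sides yields $V^{n+1}_{j+\frac12} \le \mathcal{F}_j(V^n_{j-\frac12}, V^n_{j+\frac12}, V^n_{j+\frac32})$, which is the claimed inequality.

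There is no genuine obstacle here; the only point requiring a line of care is the verification that $\mathcal{F}_j$ is monotone in \emph{all three} variables simultaneously, since the monotonicity in the middle variable is conditional on~\eqref{e:CFL} while the monotonicity in the outer variables is unconditional but relies on the correct sign conventions in $f_{j-1}, f_j, f_{j+1}$. Once that is in place, the argument is the classical "max of two sub-solutions is a sub-solution" computation for monotone schemes, and I would keep it as short as above.
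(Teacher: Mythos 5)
Your proof is correct and is exactly the argument the paper has in mind: the paper states this lemma without proof, calling it ``an immediate consequence of the monotonicity of the scheme,'' and your verification that $\mathcal{F}_j$ is non-decreasing in all three slots (the outer two from the sign structure of the Godunov fluxes and $F_0$, the middle one from the CFL condition~\eqref{e:CFL}) followed by the standard max-of-sub-solutions computation is precisely that omitted argument.
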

% -----------------------------------------------------------------
Similarly, the minimum of discrete super-solutions is a discrete super-solutions.
Combining these two facts, we get the following discrete version of entropy inequalities (using $|a-b|=a\vee b - a\wedge b$).
%-----------------------------------------------------------------
\begin{lem}[Discrete entropy inequalities]\label{l:discrete-entropy}
  Let $v^n_{j+\frac12}$ and $w^n_{j+\frac12}$ be two solutions of the numerical scheme, 
 
  \begin{equation}\label{eq::C7}
\left\{\begin{array}{ll}
 v^{n+1}_{j+\frac12} & = \mathcal{F}_j (v^n_{j-\frac12}, v^n_{j+\frac12}, v^n_{j+\frac32}) \\
    w^{n+1}_{j+\frac12}  & = \mathcal{F}_j (w^n_{j-\frac12}, w^n_{j+\frac12}, w^n_{j+\frac32}) 
\end{array}\right.
\end{equation}
  where $\mathcal{F}_j$ denotes the monotone scheme, see \eqref{e:monotone-scheme}. 
  Then $V^n_{j+\frac12} = |v^n_{j+\frac12}- w^n_{j+\frac12}|$ satisfies
  \[\frac{V^{n+1}_{j+\frac12} - V^{n}_{j+\frac12}}{\Delta t} +     \frac{ \cQ_{j+1}^n - \cQ_{j}^n}{\Delta x}  \le 0 \]
  where
  \[ \cQ_{j}^n = Q_j (v^{n}_{j-\frac12},v^{n}_{j+\frac12};w^{n}_{j-\frac12},w^{n}_{j+\frac12}) \]
  with
  \( Q_j (a,b;c,d) = f_j (a \vee c, b \vee d) - f_j(a \wedge c, b \wedge d)\) and $f_j$ is given by formula~\eqref{e:def-fj}.
\end{lem}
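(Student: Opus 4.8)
The statement to prove is Lemma~\ref{l:discrete-entropy} (Discrete entropy inequalities).

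The plan is to derive the discrete Kato-type inequality from the two preceding structural results: Lemma~\ref{l:maximum-sub} (the max of discrete sub-solutions is a discrete sub-solution) and its mirror image for the min of super-solutions. First I would record that since $v^n_{j+\frac12}$ and $w^n_{j+\frac12}$ are exact solutions of the scheme~\eqref{eq::C7}, they are simultaneously discrete sub-solutions and discrete super-solutions. Applying Lemma~\ref{l:maximum-sub} gives that $V^{+,n}_{j+\frac12}:=v^n_{j+\frac12}\vee w^n_{j+\frac12}$ satisfies
\[
V^{+,n+1}_{j+\frac12}\le \mathcal F_j\bigl(V^{+,n}_{j-\frac12},V^{+,n}_{j+\frac12},V^{+,n}_{j+\frac32}\bigr),
\]
and the dual statement gives that $V^{-,n}_{j+\frac12}:=v^n_{j+\frac12}\wedge w^n_{j+\frac12}$ satisfies the reverse inequality. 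Writing both inequalities in conservation form using~\eqref{eq:scheme-SCL}--\eqref{e:def-fj}, i.e.
\[
V^{\pm,n+1}_{j+\frac12}=V^{\pm,n}_{j+\frac12}-\frac{\Delta t}{\Delta x}\Bigl(f_{j+1}(V^{\pm,n}_{j+\frac12},V^{\pm,n}_{j+\frac32})-f_j(V^{\pm,n}_{j-\frac12},V^{\pm,n}_{j+\frac12})\Bigr)
\]
up to the appropriate inequality sign, and then subtracting the super-solution inequality for $V^-$ from the sub-solution inequality for $V^+$, I get that $V^n_{j+\frac12}=V^{+,n}_{j+\frac12}-V^{-,n}_{j+\frac12}=|v^n_{j+\frac12}-w^n_{j+\frac12}|$ (using the identity $|a-b|=a\vee b-a\wedge b$) satisfies
\[
\frac{V^{n+1}_{j+\frac12}-V^{n}_{j+\frac12}}{\Delta t}+\frac{\mathcal Q^n_{j+1}-\mathcal Q^n_j}{\Delta x}\le 0
\]
with the numerical entropy flux $\mathcal Q^n_j=f_j(V^{+,n}_{j-\frac12},V^{+,n}_{j+\frac12})-f_j(V^{-,n}_{j-\frac12},V^{-,n}_{j+\frac12})$. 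The last step is purely notational: since $V^{+,n}_{j\pm\frac12}=v^n_{j\pm\frac12}\vee w^n_{j\pm\frac12}$ and $V^{-,n}_{j\pm\frac12}=v^n_{j\pm\frac12}\wedge w^n_{j\pm\frac12}$, this is exactly $\mathcal Q^n_j=Q_j(v^n_{j-\frac12},v^n_{j+\frac12};w^n_{j-\frac12},w^n_{j+\frac12})$ with $Q_j(a,b;c,d)=f_j(a\vee c,b\vee d)-f_j(a\wedge c,b\wedge d)$, as claimed.

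The only genuine point requiring care — and the one I would flag as the main obstacle — is the passage from the two one-sided inequalities for $V^{\pm}$ to the single inequality for their difference. One has to check that subtracting an inequality $V^{+,n+1}\le(\cdots)$ from an inequality $V^{-,n+1}\ge(\cdots)$ is legitimate in the form written: this works because both right-hand sides are in conservation form with the \emph{same} $f_j$'s evaluated at the respective arguments, so the fluxes simply combine additively and the inequality signs are compatible (one gets $V^{+,n+1}_{j+\frac12}-V^{-,n+1}_{j+\frac12}\le V^{+,n}_{j+\frac12}-V^{-,n}_{j+\frac12}-\frac{\Delta t}{\Delta x}(\mathcal Q^n_{j+1}-\mathcal Q^n_j)$). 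This is precisely the discrete analogue of the Kruzhkov doubling trick, and the monotonicity (CFL) hypothesis~\eqref{e:CFL} is what made Lemma~\ref{l:maximum-sub} available in the first place, so nothing further about the scheme needs to be invoked here. I would therefore present the proof in three short movements: (1) invoke Lemma~\ref{l:maximum-sub} and its dual for $v\vee w$ and $v\wedge w$; (2) write both in conservation form and subtract; (3) identify the resulting entropy flux with $Q_j$ via $|a-b|=a\vee b-a\wedge b$.
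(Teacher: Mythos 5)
Your proposal is correct and follows exactly the route the paper itself takes (the paper only states it in one sentence before the lemma): apply Lemma~\ref{l:maximum-sub} to $v\vee w$, its dual to $v\wedge w$, write both in conservation form, subtract, and use $|a-b|=a\vee b-a\wedge b$ to identify $V^n_{j+\frac12}$ and the flux $\cQ^n_j$. Nothing is missing; your flagged point about the compatibility of the two inequality signs when subtracting is handled correctly.
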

% -----------------------------------------------------------------
We now state the $L^1$-contraction property of the scheme.
% -----------------------------------------------------------------
\begin{lem}[Discrete $L^1$-contraction]\label{l:discrete-contraction}
  Let $v^n_{j+\frac12}$ and $w^n_{j+\frac12}$ be two bounded solutions of the numerical scheme \eqref{eq::C7},
and assume that $|v_\Delta (0,\cdot) -w_\Delta (0,\cdot)|$ is integrable. 
  
Then for all $t = n \Delta t$ with $n \ge 1$, we have
\[ \int_{\R} |v_\Delta (t,x) - w_\Delta (t,x) | \dx \le \int_{\R} |v_\Delta (0,x) - w_\Delta (0,x) | \dx. \]
\end{lem}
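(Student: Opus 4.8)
The plan is to sum the discrete entropy inequality from Lemma~\ref{l:discrete-entropy} over $j \in \Z$, iterate in $n$, and exploit the telescoping of the numerical entropy fluxes $\cQ^n_j$. This is the standard $L^1$-contraction argument for monotone conservative schemes; the only point that requires care is that the telescoping sum produces no contribution ``at $j = \pm\infty$'', which will follow from the integrability hypothesis on the initial difference together with the Lipschitz continuity (uniform in $j$) of the numerical fluxes $f_j$.

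First, I would set $V^n_{j+\frac12} := |v^n_{j+\frac12} - w^n_{j+\frac12}| \ge 0$ and $S_n := \sum_{j\in\Z} V^n_{j+\frac12}$. Since $v_\Delta$ and $w_\Delta$ are piecewise constant, $\int_\R |v_\Delta(t_n,x) - w_\Delta(t_n,x)|\dx = \Delta x\, S_n$, and $S_0 < \infty$ by assumption; the goal becomes to prove $S_n \le S_0$ for all $n \ge 1$. I would establish by induction on $n$ that $S_n < \infty$ and $S_{n+1} \le S_n$.

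For the induction step, assume $S_n < \infty$. Because each $f_j$ from \eqref{e:def-fj} is $\mathfrak{L}$-Lipschitz in its two arguments and $|x\vee y - x\wedge y| = |x-y|$, the numerical entropy flux satisfies
\[ |\cQ^n_j| = \left| f_j\big(a\vee c, b\vee d\big) - f_j\big(a\wedge c, b\wedge d\big)\right| \le \mathfrak{L}\,\big(V^n_{j-\frac12} + V^n_{j+\frac12}\big), \]
with $(a,b;c,d) = (v^n_{j-\frac12}, v^n_{j+\frac12}; w^n_{j-\frac12}, w^n_{j+\frac12})$; hence $\cQ^n_j \to 0$ as $|j|\to\infty$ since $S_n < \infty$. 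Multiplying the inequality of Lemma~\ref{l:discrete-entropy} by $\Delta t\,\Delta x > 0$ and summing over $-N \le j \le N$, the flux terms telescope to
\[ \Delta x \sum_{j=-N}^{N}\big(V^{n+1}_{j+\frac12} - V^n_{j+\frac12}\big) + \Delta t\,\big(\cQ^n_{N+1} - \cQ^n_{-N}\big) \le 0. \]
As $V^{n+1}_{j+\frac12}\ge 0$, the partial sums $\sum_{j=-N}^N V^{n+1}_{j+\frac12}$ are bounded above by $S_n - \frac{\Delta t}{\Delta x}\big(\cQ^n_{N+1} - \cQ^n_{-N}\big) \to S_n$, so $S_{n+1} < \infty$; then letting $N \to \infty$ in the displayed inequality gives $S_{n+1} \le S_n$. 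Iterating and multiplying by $\Delta x$ yields the claim.

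The step I expect to be the (only, and mild) obstacle is the bookkeeping of the boundary terms $\cQ^n_{\pm N}$ in the telescoping sum: one must simultaneously propagate summability of $V^n$ and derive the one-step contraction. Both are obtained at once by the argument above, using the a priori integrability of $v_\Delta(0,\cdot) - w_\Delta(0,\cdot)$ and the uniform Lipschitz bound on the $f_j$; no further idea is needed.
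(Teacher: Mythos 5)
Your proof is correct, but it takes a genuinely different route from the paper's. The paper first reduces to the case where $|v_\Delta(0,\cdot)-w_\Delta(0,\cdot)|$ has compact support: the discrete finite speed of propagation (the scheme is a three\nobreakdash-point scheme) guarantees that the difference stays supported in a fixed bounded set of cells on $[0,t]$, so the entropy fluxes $\cQ^n_j$ vanish identically for $|j|$ large and the telescoping sum closes with no boundary terms; the general case is then obtained by truncating the initial data and passing to the limit. You instead work directly with the infinite sum $S_n=\sum_j V^n_{j+\frac12}$ and control the boundary terms $\cQ^n_{\pm N}$ via the uniform Lipschitz bound $|\cQ^n_j|\le \mathfrak{L}\bigl(V^n_{j-\frac12}+V^n_{j+\frac12}\bigr)$, which forces $\cQ^n_j\to 0$ as $|j|\to\infty$ once $S_n<\infty$; propagating the summability of $V^n$ by induction then lets you pass $N\to\infty$ in the telescoped inequality. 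Your argument is self-contained in that it avoids both the finite-speed-of-propagation observation and the final approximation step (whose limit passage the paper leaves to the reader); the paper's version buys a slightly shorter computation at the price of that extra truncation argument. The one step worth spelling out in your write-up is the finiteness of $S_{n+1}$ before letting $N\to\infty$, which you do correctly handle via the boundedness of the partial sums.
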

% ------------------------------------------------------------------
\begin{proof}
  We first assume that $|v_\Delta (0,x) - w_\Delta (0,x)|$ has compact support. In this case,
  there exists $J \ge 1$ such that for all $s \in [0,t]$, $|v_\Delta(s,x) - w_\Delta (s,x)|=0$ for
  $|x| \ge x_J$. In particular, 
  \[ Q_j (v^{m}_{j-\frac12},v^{m}_{j+\frac12};w^{m}_{j -\frac12},w^{m}_{j +\frac12})=0 \]
  for $|j| \ge J+1$.

 We write
 the discrete entropy inequalities from Lemma~\ref{l:discrete-entropy} at time $m \in \{0,\dots,n-1\}$ and
 sum over $j \in \{-J-1,\dots, J+1\}$,
  \begin{equation}\label{e:1}
    \sum_{j \in -J-1}^{J+1} \frac{V^{m+1}_{j+\frac12} - V^{m}_{j+\frac12}}{\Delta t} \le 0.
  \end{equation}
  This yields the result if $|v_\Delta (0,x) - w_\Delta (0,x)|$ has compact support.

  We can now consider the sequence of numerical solutions associated with $v_\Delta (0,x)$ supported in the interval
  $[-N\Delta x,N\Delta x]$
  for $N \ge 1$. We can then easily pass to the limit in the inequality that we obtained in the first case.
  \end{proof}

\subsection{Continuous BV estimates}

In order to prove that the numerical solution associated with the scalar conservation law converges
towards the entropy solution, we derive discrete BV estimates in the time and the space variables. 
The computations at the discrete level follow their continuous counterpart closely. This is the
reason why we first explain how to derive BV estimates at the continuous level without justification.

Let $v$ be an entropy solution of \eqref{eq:scl}. 

\paragraph{Time BV estimate.}
Given a time increment $h$, the function $w(t,x) = v(t+h,x)$ is also an entropy solution of \eqref{eq:scl}
and the $L^1$-contraction property (with finite speed of propagation) implies that
\[ \int_{\R} |v(t+h,x) - v(t,x)| \dx \le \int_{\R} |v(h,x) - v(0,x)| \dx.\]
Dividing by $h$ and letting $h \to 0$, we obtain
\[ \int_{\R} |\partial_t v(t,x)| \dx \le \int_{\R} |\partial_t v(0,x)| \dx.\]
We now use that $\partial_t v = - \partial_x (H_\alpha(v)) = - H'_\alpha (v) v_x$
and in particular,
\[|\partial_t v(0,x)|\le L |(v_0)_x|.\]
We conclude that 
\[ \int_{\R} |\partial_t v(t,x)| \dx \le L \|v_0\|_{\BV}.\]

\paragraph{Space BV estimate.}
Given a spatial increment $h$, since $v$ is an entropy solution of \eqref{eq:scl}, we know that
it satisfies for $t>0$ and $x>0$,
\[ \partial_t |v(t,x+h) - v(t,x)| + \partial_x q_R (v(t,x),v(t,x+h) \le 0.\]
Integrating this inequality on $[t_1,t_2] \times [a_R,b_R]$ with $0<t_1<t_2$ and $0<a_R < b_R$, we get
\begin{align*}
  \int_{[a_R,b_R]} |v(t_2,x+h)-v(t_2,x)| \dx &\le \int_{[a_R,b_R]} |v(t_1,x+h)-v(t_1,x)| \dx \\
  & + \sum_{c=a_R,b_R} \int_{t_1}^{t_2} |q_R (v(t,c),v(t,c+h))| \dt .
\end{align*}
We now estimate the right hand side of the previous inequality.
Recalling the definition of $q_R$, see \eqref{e:qalpha}, we have 
\begin{align*}
  \int_{t_1}^{t_2} |q_R (v(t,c),v(t,c+h))|  \dt &\le   \int_{t_1}^{t_2} |H_R (v(t,c)) - H_R (v(t,c+h))|  \dt \\
                                                &\le   \int_{t_1}^{t_2} \left| \int_{[c,c+h]} \partial_t v(t,x) \dx \right|  \dt \\
  & \le \int_{[c,c+h]} \| v(\cdot,x)\|_{\BV ([t_1,t_2])} \dx. 
\end{align*}
We thus get,
\[
  \int_{[a_R,b_R]} |v(t_2,x+h)-v(t_2,x)| \dx \le \int_{[a_R,b_R]} |v(t_1,x+h)-v(t_1,x)| \dx 
   + \sum_{c=a_R,b_R} \int_{[c,c+h]} \| v(\cdot,x)\|_{\BV ([t_1,t_2])} \dx.
\]
Dividing by $h\to 0$, we formally get
$$\int_{[a_R,b_R]} |\partial_x v(t_2,x)|\ dx \le \int_{[a_R,b_R]} |\partial_x v(t_1,x)|\ dx +\sum_{c=a_R,b_R}  ||v(\cdot,c)||_{BV([t_1,t_2])}$$

\subsection{Discrete BV estimates}

We first show that discrete solutions of the scalar conservation law have bounded variation in the time variable.
This is a classical consequence of the $L^1$-contraction property.
% --------------------------------------------
\begin{lem}[Discrete time BV estimate]\label{l:t-BV-discrete}
  Let $v^n_{j+\frac12}$ be a solution of the discrete numerical scheme such that \eqref{e:v0-discrete} holds true for
  some $v_0 \in \BV (\R)$. Then 
  \[ \int_{\R} \frac{\left|v_\Delta (t+\Delta t,x)-v_\Delta (t,x)\right|}{\Delta t} \dx \le 2 \mathfrak{L} \|v_0\|_{\BV(\R)} \]
  where $\mathfrak{L} = \max_{\alpha \in \{0,L,R\}} \mathfrak{L}_\alpha$. 
\end{lem}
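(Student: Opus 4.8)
The plan is to follow the continuous time BV estimate computation closely, using the discrete $L^1$-contraction property from Lemma~\ref{l:discrete-contraction} together with the fact that a time-shifted numerical solution is still a numerical solution. First I would observe that if $(v^n_{j+\frac12})$ solves the scheme \eqref{eq::C7}, then $w^n_{j+\frac12} := v^{n+1}_{j+\frac12}$ also solves it; this is immediate from the form \eqref{e:monotone-scheme} of the scheme, which does not depend explicitly on $n$. Applying Lemma~\ref{l:discrete-contraction} to $v$ and this shifted solution $w$ yields, for $t = n\Delta t$ with $n \ge 1$,
\[
  \int_{\R} |v_\Delta(t+\Delta t,x) - v_\Delta(t,x)|\dx \le \int_{\R} |v_\Delta(\Delta t,x) - v_\Delta(0,x)|\dx.
\]
(One should be a little careful that $v_\Delta$ is piecewise constant in time, so that the continuous-in-$t$ shift by $\Delta t$ matches the discrete index shift; this is exactly what \eqref{e:v-delta} gives.) It therefore suffices to bound the right-hand side, i.e.\ to control one time step.

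The main step is thus to estimate $\sum_{j} |v^1_{j+\frac12} - v^0_{j+\frac12}|\,\Delta x$. Using the scheme in conservation form \eqref{eq:scheme-SCL}, we have
\[
  v^1_{j+\frac12} - v^0_{j+\frac12} = -\frac{\Delta t}{\Delta x}\left( f_{j+1}(v^0_{j+\frac12},v^0_{j+\frac32}) - f_j(v^0_{j-\frac12},v^0_{j+\frac12}) \right),
\]
so that
\[
  \sum_{j\in\Z} |v^1_{j+\frac12} - v^0_{j+\frac12}|\,\Delta x \le \Delta t \sum_{j\in\Z} \left| f_{j+1}(v^0_{j+\frac12},v^0_{j+\frac32}) - f_j(v^0_{j-\frac12},v^0_{j+\frac12}) \right|.
\]
Now I would exploit the Lipschitz continuity of the numerical fluxes. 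Away from $j=0$ (i.e.\ for $j \le -1$ and $j \ge 1$, where $f_{j+1}$ and $f_j$ are the \emph{same} Godunov flux $g^{H_\alpha}$), the difference telescopes against the consistency relation $g^{H_\alpha}(p,p) = H_\alpha(p)$: writing $f_{j+1}(a,b) - f_j(b,c)$ with the common flux $g^{H_\alpha}$ and inserting $\pm g^{H_\alpha}(b,b) = \pm H_\alpha(b)$ and using that $g^{H_\alpha}$ is $\mathfrak L_\alpha$-Lipschitz in each variable, one gets a bound by $\mathfrak L_\alpha(|b-a| + |c-b|) = \mathfrak L_\alpha(|v^0_{j+\frac12} - v^0_{j-\frac12}| + |v^0_{j+\frac32} - v^0_{j+\frac12}|)$. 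The two terms at $j=0$ (involving $F_0$ on one side and a Godunov flux on the other) are handled the same way, using that $\mathfrak L_0 \le \mathfrak L$ and $F_0(p,p)$ — actually here one just uses $\mathfrak L$-Lipschitz continuity of all the $f_j$ in both arguments directly, bounding $|f_{j+1}(v^0_{j+\frac12},v^0_{j+\frac32}) - f_j(v^0_{j-\frac12},v^0_{j+\frac12})|$ by inserting an intermediate term and getting $\mathfrak L(|v^0_{j+\frac12} - v^0_{j-\frac12}| + |v^0_{j+\frac32} - v^0_{j+\frac12}|)$. Summing over $j\in\Z$, each discrete jump $|v^0_{j+\frac12} - v^0_{j-\frac12}|$ appears a bounded number of times (at most twice), so the total is at most $2\mathfrak L \sum_{j\in\Z} |v^0_{j+\frac12} - v^0_{j-\frac12}|$.

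Finally I would identify $\sum_{j\in\Z} |v^0_{j+\frac12} - v^0_{j-\frac12}|$ with a discrete total variation controlled by $\|v_0\|_{\BV(\R)}$. Indeed, by \eqref{e:v0-discrete}, $v^0_{j+\frac12} = \frac1{\Delta x}\int_{x_j}^{x_{j+1}} v_0$, so $v^0_{j+\frac12} - v^0_{j-\frac12} = \frac1{\Delta x}\int_0^{\Delta x}(v_0(x_j + s) - v_0(x_j - \Delta x + s))\,\mathrm ds$, and hence $\sum_j |v^0_{j+\frac12} - v^0_{j-\frac12}| \le \frac1{\Delta x}\int_0^{\Delta x}\sum_j |v_0(x_j+s) - v_0(x_j - \Delta x + s)|\,\mathrm ds \le \|v_0\|_{\BV(\R)}$, since for each fixed $s$ the inner sum is a sum of increments of $v_0$ over a partition of $\R$. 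Combining, $\sum_j |v^1_{j+\frac12} - v^0_{j+\frac12}|\,\Delta x \le 2\mathfrak L\,\Delta t\,\|v_0\|_{\BV(\R)}$, which after dividing by $\Delta t$ and combining with the contraction inequality above gives the claim for $t = n\Delta t$, $n \ge 1$; the case $n=0$ is exactly this one-step estimate as well. The main obstacle is the bookkeeping at and near $j=0$: one must check that the presence of the distinct flux $F_0$ (rather than a Godunov flux consistent with a single $H_\alpha$) does not spoil the telescoping, which works precisely because the estimate only uses Lipschitz continuity of the numerical fluxes in each argument and the uniform constant $\mathfrak L$, not any structural property tying $f_{j+1}$ and $f_j$ together at the interface.
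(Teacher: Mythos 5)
Your strategy is exactly the paper's: estimate the first time step directly from the scheme and the Lipschitz continuity of the numerical fluxes, propagate to all later times by applying the discrete $L^1$-contraction of Lemma~\ref{l:discrete-contraction} to $v$ and its time shift, and control the discrete total variation of $v^0$ by $\|v_0\|_{\BV(\R)}$ through the cell averages \eqref{e:v0-discrete}. These steps are carried out correctly and in more detail than in the paper (which writes the comparison function as the space shift $v^n_{j+1+\frac12}$, almost certainly a typo for the time shift $v^{n+1}_{j+\frac12}$ that you use, since a space shift is not a solution of the scheme near $j=0$).

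The one real issue is precisely the interface bookkeeping that you raise at the end and then wave away. For $j\le -2$ and $j\ge 1$ the telescoping
\[
f_{j+1}(a,b)-f_j(c,a)=\left(f(a,b)-f(a,a)\right)+\left(f(a,a)-f(c,a)\right)
\]
uses not only Lipschitz continuity but also the fact that $f_{j+1}$ and $f_j$ are the \emph{same} function, so that the inserted terms cancel. At $j=0$ and $j=-1$ they are not, and the insertion leaves the residual $|f_{j+1}(a,a)-f_j(a,a)|$, i.e.\ $|H_R(a)-F_0(a,a)|$ or $|F_0(a,a)-H_L(a)|$, which is not controlled by $\mathfrak{L}$ times increments of $v^0$; assumption \eqref{e:assum-F} does not make $F_0$ consistent with $H_L,H_R$ on the diagonal. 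This is not a removable technicality: for $v_0\equiv 0$ the scheme gives $v^1_{\frac12}=-\frac{\Delta t}{\Delta x}\left(H_R(0)-F_0(0,0)\right)$ and $v^1_{-\frac12}=-\frac{\Delta t}{\Delta x}\left(F_0(0,0)-H_L(0)\right)$, so that
\[
\int_{\R}\frac{|v_\Delta(\Delta t,x)-v_\Delta(0,x)|}{\Delta t}\dx=|H_R(0)-F_0(0,0)|+|F_0(0,0)-H_L(0)|,
\]
which is positive in general while $2\mathfrak{L}\|v_0\|_{\BV(\R)}=0$. The estimate therefore requires an extra additive constant of the form $\sup_{|p|\le \|v_0\|_{L^\infty}}\left(|H_R(p)-F_0(p,p)|+|H_L(p)-F_0(p,p)|\right)$ coming from the two interface cells (harmless for the subsequent compactness argument, which only uses BV bounds on compact sets away from $x=0$, but it does change the statement). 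To be fair, the paper's own proof asserts the same one-step inequality for all $j$ with the same one-line justification and has the identical gap; but your closing claim that the telescoping works because it uses ``only Lipschitz continuity \dots not any structural property tying $f_{j+1}$ and $f_j$ together at the interface'' is exactly the false step, and it is worth either adding the correction term or restricting the clean bound to $|j|\ge 1$... in any case the consistency defect at the junction must be acknowledged.
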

%---------------------------------------------
\begin{proof}
We first  use the formula for $v^1_{j+\frac12}$ and the fact that $f_j$ is $\mathfrak{L}$-Lipschitz continuous in order to get,
  \[ \frac{\left|v^1_{j+\frac12}-v^0_{j+\frac12} \right|}{\Delta t} \le \frac{\mathfrak{L}}{\Delta x} \left(  |v^0_{j+\frac12} - v^0_{j-\frac12}| + |v^0_{j+\frac32} - v^0_{j+\frac12}|\right). \]
  We now use the formula for $v^0_j$, see \eqref{e:v0-discrete}, and we get,
  \[ |v^0_{j+\frac32} - v^0_{j+\frac12}| \le \frac{1}{\Delta x} \int_{x_j}^{x_{j+1}} |v_0 (y + \Delta x) - v_0(y)| \, \mathrm{d} y. \]
This shows that $v_\Delta (\Delta t,x) - v_\Delta (0,x)$ is integrable. 

Then the discrete $L^1$-contraction (Lemma~\ref{l:discrete-contraction})  applied to $v^n_{j+\frac12}$ and $w^n_{j+\frac12} = v^n_{j+1+\frac12}$ yields the result for all $n \ge 0$, using the fact that $\displaystyle \int_\R \left|\frac{v_\Delta(x+\Delta x)-v_0(x)}{\Delta x}\right|\ dx \le ||v_0||_{BV(\R)}$.
\end{proof}

We now turn to BV estimates in the spatial variable. Such estimates are less classical but known, see for instance \cite[Lemma~4.2]{MR2379885}.
We provide an alternative proof following the reasoning at the continuous level presented above. 
% --------------------------------------------
\begin{lem}[Discrete space BV estimate]\label{l:x-BV-discrete}
  Let $v^n_{j+\frac12}$ be a solution of the discrete numerical scheme. 
  Let $j_R,J_R \ge 1$ and $j_L,J_L \le -2$ with $J_L \le j_L$ and $j_R \le J_R$.
  Then for any integers $n_1,n_2$ such that $1 \le n_1 < n_2$, we have,
  \begin{align*}
    \sum_{j=j_R}^{J_R} \frac{|v^{n_2}_{j+\frac12+1}-v^{n_2}_{j+\frac12}|}{\Delta x}
     \le \sum_{j=j_R}^{J_R} \frac{|v^{n_1}_{j+\frac12+1}-v^{n_1}_{j+\frac12}|}{\Delta x}
    + \Ccfl \sum_{n=n_1}^{n_2-1} \frac{\left| v^{n+1}_{j_R+\frac12}- v^n_{j_R+\frac12} \right|}{\Delta t}
    + \Ccfl \sum_{n=n_1}^{n_2-1}  \frac{\left| v^{n+1}_{J_R+1+\frac12}- v^n_{J_R+1+\frac12} \right|}{\Delta t},\\
        \sum_{j=J_L}^{j_L} \frac{|v^{n_2}_{j+\frac12+1}-v^{n_2}_{j+\frac12}|}{\Delta x}
     \le \sum_{j=J_L}^{j_l} \frac{|v^{n_1}_{j+\frac12+1}-v^{n_1}_{j+\frac12}|}{\Delta x}
    + \Ccfl \sum_{n=n_1}^{n_2-1} \frac{\left| v^{n+1}_{J_L+\frac12}- v^n_{J_L+\frac12} \right|}{\Delta t}
    + \Ccfl \sum_{n=n_1}^{n_2-1}  \frac{\left| v^{n+1}_{j_L+1+\frac12}- v^n_{j_L+1+\frac12} \right|}{\Delta t}.
  \end{align*}
\end{lem}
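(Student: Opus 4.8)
The plan is to mimic, at the discrete level, the continuous space BV computation carried out just before the statement, using the discrete entropy inequalities of Lemma~\ref{l:discrete-entropy} as the substitute for the differential inequality $\partial_t |v(t,x+h)-v(t,x)| + \partial_x q_\alpha(\cdot,\cdot) \le 0$. Concretely, I would apply Lemma~\ref{l:discrete-entropy} to the two solutions $v^n_{j+\frac12}$ and its spatial shift $w^n_{j+\frac12} := v^n_{j+1+\frac12}$, which is again a solution of the scheme \emph{on the right branch} (i.e.\ for indices $j \ge 1$, where $f_j = g^{H_R}$ does not depend on $j$); the same works on the left branch where $f_j = g^{H_L}$. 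This gives, for $j$ in the relevant range,
\begin{equation}\label{e:disc-ent-shift}
  \frac{V^{n+1}_{j+\frac12} - V^{n}_{j+\frac12}}{\Delta t} + \frac{\cQ^n_{j+1} - \cQ^n_j}{\Delta x} \le 0,
  \qquad V^n_{j+\frac12} = |v^n_{j+\frac12} - v^n_{j+1+\frac12}|,
\end{equation}
where $\cQ^n_j$ is the numerical entropy flux built from $g^{H_R}$ (resp.\ $g^{H_L}$). Summing \eqref{e:disc-ent-shift} over $j \in \{j_R, \dots, J_R\}$ and over $n \in \{n_1,\dots,n_2-1\}$, the spatial flux terms telescope and one is left with the two boundary flux contributions $\cQ^n_{j_R}$ and $\cQ^n_{J_R+1}$, exactly as the boundary terms $q_R(v(t,c),v(t,c+h))$ at $c = a_R, b_R$ appeared in the continuous computation.

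The second ingredient is to bound these boundary numerical entropy flux terms by discrete time increments, replacing the continuous estimate $\int |q_R(v(t,c),v(t,c+h))|\,\mathrm dt \le \int_{[c,c+h]} \|v(\cdot,x)\|_{\mathrm{BV}}\,\mathrm dx$. Since $\cQ^n_j = g^{H_R}(v^n_{j-\frac12}\vee v^n_{j+\frac12}, v^n_{j+\frac12} \vee v^n_{j+\frac32}) - g^{H_R}(v^n_{j-\frac12}\wedge v^n_{j+\frac12}, v^n_{j+\frac12}\wedge v^n_{j+\frac32})$ and $g^{H_R}$ is $\mathfrak L$-Lipschitz in each argument with $|a\vee c - a\wedge c| = |a-c|$, we get $|\cQ^n_j| \le \mathfrak L\big( |v^n_{j-\frac12} - v^n_{j+\frac12}| + |v^n_{j+\frac12} - v^n_{j+\frac32}|\big)$. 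Here is where the scheme itself must be used: from \eqref{eq:scheme-SCL}, $v^{n+1}_{j+\frac12} - v^n_{j+\frac12} = -\frac{\Delta t}{\Delta x}(f_{j+1}(v^n_{j+\frac12},v^n_{j+\frac32}) - f_j(v^n_{j-\frac12},v^n_{j+\frac12}))$, and by a telescoping/comparison argument one expresses a spatial difference like $|v^n_{j+\frac12} - v^n_{j+\frac32}|$ at the boundary column $j = j_R$ in terms of the time increment $|v^{n+1}_{j_R+\frac12} - v^n_{j_R+\frac12}|/\Delta t$ up to the factor $\Delta x/\Delta t$, and then the CFL condition $\Delta t/\Delta x \le \Ccfl = (2\mathfrak L)^{-1}$ absorbs the constant $\mathfrak L$ into $\Ccfl$. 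Carrying this out carefully for both boundary columns $j_R$ and $J_R+1$ (and symmetrically $J_L$, $j_L+1$ on the left, where the index shift and the role of $\pm$ must be tracked, explaining the asymmetric index bookkeeping in the statement) produces precisely the claimed inequalities.

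I would organise the write-up as: (1) state that $w^n_{j+\frac12} = v^n_{j+1+\frac12}$ solves the scheme on each fixed branch (crucial that $j_R \ge 1$ and $J_L \le -2$ keep us strictly on one side of the junction, so the relevant $f_j$ is branch-independent); (2) invoke Lemma~\ref{l:discrete-entropy} to obtain \eqref{e:disc-ent-shift}; (3) double-sum in $j$ and $n$ and telescope; (4) estimate the two surviving boundary entropy-flux sums via the Lipschitz bound on $g^{H_\alpha}$ and the scheme relation, using CFL to get the constant $\Ccfl$; (5) assemble, and note the left-branch case is identical after the reflection described in Remark~\ref{rem::C4}. The main obstacle I anticipate is step (4): correctly relating the purely spatial difference appearing in $|\cQ^n_j|$ at a boundary column to a purely temporal difference at the same column. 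The continuous argument hides this by an integration by parts in $x$ combined with the PDE $\partial_t v = -\partial_x H_\alpha(v)$; at the discrete level one must instead use the scheme update formula to trade one spatial difference for one time difference, and be scrupulous about which neighbouring column the resulting time increment lives on (this is exactly why the right-hand sides in the statement feature time increments at columns $j_R$ and $J_R+1$ rather than, say, $j_R$ and $J_R$). Getting the index bookkeeping right — and checking that no cross-junction term sneaks in because $j_R,J_R \ge 1$ — is the only genuinely delicate part; everything else is a faithful discretisation of the continuous computation already displayed.
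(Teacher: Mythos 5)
Your overall architecture (shift $w^n_{j+\frac12}=v^n_{j+1+\frac12}$, apply Lemma~\ref{l:discrete-entropy} on a fixed branch where $f_j$ is $j$-independent, double-sum and telescope, then control the two surviving boundary entropy-flux sums) is exactly the paper's, and your steps (1)--(3) and (5) are fine. The gap is in step (4), which you yourself flag as the delicate point but resolve incorrectly. You bound the boundary term by the Lipschitz estimate $|\cQ^n_j|\le \mathfrak L\bigl(|v^n_{j-\frac12}-v^n_{j+\frac12}|+|v^n_{j+\frac12}-v^n_{j+\frac32}|\bigr)$ and then propose to ``trade one spatial difference for one time difference'' using the update formula \eqref{eq:scheme-SCL}. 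This trade goes the wrong way: the scheme gives $|v^{n+1}_{j+\frac12}-v^n_{j+\frac12}|=\frac{\Delta t}{\Delta x}\bigl|g^{H_R}(v^n_{j+\frac12},v^n_{j+\frac32})-g^{H_R}(v^n_{j-\frac12},v^n_{j+\frac12})\bigr|$, i.e.\ it bounds the time increment by a difference of numerical fluxes, not the spatial oscillation of $v$ by the time increment. A standing discrete shock (say $H_R(v)=v^2/2$, $v^n_{j+\frac12}=1$ for $j<j_0$ and $-1$ for $j\ge j_0$) is an exact stationary solution of the scheme with zero time increments but spatial differences equal to $2$, so no inequality of the form $|v^n_{j+\frac12}-v^n_{j+\frac32}|\le C\,\frac{\Delta x}{\Delta t}\,|v^{n+1}_{j+\frac12}-v^n_{j+\frac12}|$ can hold.

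The missing ingredient is the sharper, structural estimate $|Q^\alpha(a,b;b,c)|\le |g^{H_\alpha}(a,b)-g^{H_\alpha}(b,c)|$ (the paper's Lemma~\ref{l:tech}), whose right-hand side is a difference of \emph{fluxes} rather than of states; by \eqref{eq:scheme-SCL} that difference equals exactly $\frac{\Delta x}{\Delta t}|v^{n+1}_{j+\frac12}-v^n_{j+\frac12}|$, and the constant $\Ccfl$ then appears simply from $\frac{1}{\Delta x}\le \Ccfl\frac{1}{\Delta t}$ via \eqref{e:CFL}. In the standing-shock example this estimate returns $0\le 0$, precisely where your Lipschitz bound returns the useless $2\mathfrak L$. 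Proving Lemma~\ref{l:tech} requires a genuine case analysis on which of $g^{H_\alpha}(a,b)$, $g^{H_\alpha}(a,c)$, $g^{H_\alpha}(b,c)$, $g^{H_\alpha}(b,b)$ is attained by $g^{H_\alpha}(a\vee b,\,b\vee c)$, exploiting the monotonicities of the Godunov flux; it is not a formal consequence of Lipschitz continuity. Without this lemma (or an equivalent substitute), your step (4) does not close.
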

%---------------------------------------------
\begin{rem}\label{r:mean-x}
  In order to use the time BV estimate from Lemma~\ref{l:t-BV-discrete}, it is necessary to consider a mean (i.e. to integrate) in
  the $x$ variable in the right hand side, that is to say in $j_R$ and $J_R$.
\end{rem}
\begin{proof}
  We only do the proof at the right hand side of the origin since the estimate on the other side is identical. 
Considering $w^n_{j+\frac12} = v^n_{j+1+\frac12}$ and integrating in the discrete variables $n$ and $j$
the estimate from Lemma~\ref{l:discrete-entropy} yields the following discrete BV estimate away from $x=0$.  
  \begin{multline*}
    \sum_{j=j_R}^{J_R} |v^{n_2}_{j+\frac12+1}-v^{n_2}_{j+\frac12}| 
     \le \sum_{j=j_R}^{J_R} |v^{n_1}_{j+\frac12+1}-v^{n_1}_{j+\frac12}|  \\
    +  \sum_{n=n_1}^{n_2-1} Q^R (v^n_{j_R-\frac12},v^n_{j_R+\frac12}; v^n_{j_R+\frac12}, v^n_{j_R+\frac32}) \frac{\Delta t}{\Delta x} 
    -  \sum_{n=n_1}^{n_2-1} Q^R (v^n_{J_R+1-\frac12},v^n_{J_R+1+\frac12}; v^n_{J_R+1+\frac12}, v^n_{J_R+1+\frac32}) \frac{\Delta t}{\Delta x}.
  \end{multline*}
  where  \( Q^R (a,b;c,d) = g^{H_R} (a \vee c, b \vee d) - g^{H_R}(a \wedge c, b \wedge d).\)
Thanks to the technical lemma~\ref{l:tech}, we get,
  \begin{align*}
    \sum_{j=j_R}^{J_R} |v^{n_2}_{j+\frac12+1}-v^{n_2}_{j+\frac12}| 
     \le& \sum_{j=j_R}^{J_R} |v^{n_1}_{j+\frac12+1}-v^{n_1}_{j+\frac12}|  \\
    &+  \sum_{n=n_1}^{n_2-1} \left|g^{H_R} (v^n_{j_R-\frac12},v^n_{j_R+\frac12}) - g^{H_R}(v^n_{j_R+\frac12}, v^n_{j_R+\frac32}) \right| \frac{\Delta t}{\Delta x} 
   \\
    &+  \sum_{n=n_1}^{n_2-1} \left|g^{H_R} (v^n_{J_R+1-\frac12},v^n_{J_R+1+\frac12})  - g^{H_R}(v^n_{J_R+1+\frac12}, v^n_{J_R+1+\frac32}) \right| \frac{\Delta t}{\Delta x}.
  \end{align*}
Recalling the definition of the scheme, see \eqref{eq:scheme-SCL}, and the CFL condition, see \eqref{e:CFL}, we obtain the desired estimate. 
\end{proof}

We used the following technical lemma in the proof of the spatial BV estimates.
It can be viewed as the discrete counterpart of the elementary inequality $|q_\alpha (a,c)| \le |H_\alpha (a) - H_\alpha (c)|$ at the continuous level. Recall that $q_\alpha(\cdot,b)$ is the flux function associated
with the entropy function $|\cdot -b|$, see \eqref{e:qalpha}.
%-----------------------------------------------------------------------------
\begin{lem}\label{l:tech}
For all $a,b,c \in \R$, we have: $|Q^\alpha (a,b;b,c) | \le \left|g^{H_\alpha} (a,b) - g^{H_\alpha} (b,c) \right|$.
\end{lem}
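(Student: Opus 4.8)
The plan is to reduce the inequality to a case analysis on the relative positions of $a$, $b$, and $c$, using the explicit definition \eqref{eq::C16} of the Godunov flux $g^{H_\alpha}$ together with its monotonicity (non-decreasing in the first slot, non-increasing in the second). Write $g := g^{H_\alpha}$ for brevity and recall that, by definition, $Q^\alpha(a,b;b,c) = g(a \vee b, b \vee c) - g(a \wedge b, b \wedge c)$. The key structural observation is that the middle argument $b$ appears in both slots, so the two ``extreme'' pairs $(a\vee b, b\vee c)$ and $(a\wedge b, b\wedge c)$ each have at least one coordinate equal to $b$: indeed $b \vee c \ge b$ and $a \vee b \ge b$ while $a \wedge b \le b$ and $b \wedge c \le b$. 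This means $g(a\vee b, b\vee c)$ always lies between $g(b,b) = H_\alpha(b)$ and $g(b,\cdot)$ or $g(\cdot,b)$ evaluated at the relevant endpoint, which is precisely where the comparison with $\left|g(a,b) - g(b,c)\right|$ comes from.

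First I would split into the four sign cases for $(a-b, c-b)$. Consider, for instance, $a \ge b$ and $c \ge b$: then $a\vee b = a$, $b\vee c = c$, $a\wedge b = b$, $b\wedge c = b$, so $Q^\alpha(a,b;b,c) = g(a,c) - g(b,b) = g(a,c) - H_\alpha(b)$. On the other hand, $g(a,b) = \min_{p\in[b,a]} H_\alpha(p)$ (the ordered case since $a \ge b$) and $g(b,c) = \min_{p\in[b,c]} H_\alpha(p)$, both of which are $\le H_\alpha(b)$; moreover $g(a,c)$ — whichever of the two ordered cases applies — is also controlled by the infimum/supremum of $H_\alpha$ over $[b,a]\cup[b,c]$, which equals $g(a,b) \wedge g(b,c)$ or $g(a,b)\vee g(b,c)$ depending on orientation. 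A short monotonicity argument then shows $\left|g(a,c) - H_\alpha(b)\right| \le \left|g(a,b) - g(b,c)\right|$, because $g(a,b)$ and $g(b,c)$ straddle the value $g(a,c)$ on one side and $H_\alpha(b)$ on the other. The symmetric case $a \le b$, $c \le b$ is identical with min/sup roles exchanged. The two mixed cases ($a \ge b \ge c$ and $a \le b \le c$) are even easier: there $a\vee b = a$, $b\vee c = b$, $a \wedge b = b$, $b \wedge c = c$, so $Q^\alpha(a,b;b,c) = g(a,b) - g(b,c)$ outright, and the claimed inequality holds with equality (up to the absolute value, which only helps).

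The main obstacle I anticipate is the bookkeeping in the two ``aligned'' cases ($a,c$ both $\ge b$ or both $\le b$), where $Q^\alpha$ is $g(a,c) - H_\alpha(b)$ rather than the difference $g(a,b) - g(b,c)$ directly; one must verify that passing through the common point $b$ does not increase the oscillation. The clean way to handle this is the elementary fact that for any three values $x = g(a,b)$, $y = g(b,c)$ and $z = g(a,c)$, if $z$ lies in the interval $[x\wedge y, x\vee y]$ and $H_\alpha(b)$ does too (which follows because $H_\alpha(b) = g(b,b)$ and $g$ is monotone, so $H_\alpha(b)$ is between $g(a,b)$ and $g(b,b)=H_\alpha(b)$... more carefully, $H_\alpha(b)\ge g(a,b)$ and $H_\alpha(b)\ge g(b,c)$ in the case both infima, hence $H_\alpha(b) \ge x \vee y \ge z$), then $\left|z - H_\alpha(b)\right| \le \left|x - y\right|$ fails in general — so instead one argues: $z = g(a,c) \le g(a,b) = x$ and $z \le g(b,c) = y$ by monotonicity of $g$ in each slot (enlarging an interval can only decrease a min), while $H_\alpha(b) \ge x$ and $H_\alpha(b) \ge y$; hence $H_\alpha(b) - z \ge \max(x,y) - \min(x,y) = |x-y|$ would be the wrong direction. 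The correct reduction uses that in fact $z = \min(x, y)$ exactly (the min over $[b,a]\cup[b,c] = [b, a\vee c]$ when $a,c\ge b$ splits as the min of the min over $[b,a]$ and over $[b,c]$ only if $a\vee c$ is one of them — indeed $[b,a\vee c] = [b,a]\cup[b,c]$), so $g(a,c) = g(a,b)\wedge g(b,c)$, and then $\left|g(a,b)\wedge g(b,c) - H_\alpha(b)\right| = H_\alpha(b) - g(a,b)\wedge g(b,c) \le \big(H_\alpha(b) - g(a,b)\big) \vee \big(H_\alpha(b) - g(b,c)\big)$, which one checks is $\le \left|g(a,b) - g(b,c)\right|$ when both parenthesised quantities are nonnegative — and it is, since $g(a,b)\wedge g(b,c)$ equals the smaller of the two, so the larger parenthesised quantity equals $H_\alpha(b) - g(a,b)\wedge g(b,c)$ minus nothing... at this point the cleanest finish is simply: $\left|z - H_\alpha(b)\right| = H_\alpha(b) - z$ and $\left|x-y\right| = \max(x,y) - z$, so it remains to show $H_\alpha(b) \le \max(x,y)$, i.e. $H_\alpha(b) \le g(a,b)$ or $H_\alpha(b)\le g(b,c)$ — but $g(a,b) = \min_{[b,a]}H_\alpha \le H_\alpha(b)$, a contradiction, UNLESS $a = b$ (then $g(a,b) = H_\alpha(b)$, so $\max(x,y) \ge H_\alpha(b)$) or $c = b$ similarly. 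So the degenerate sub-cases $a=b$ or $c=b$ must be peeled off first (there the inequality is trivial since $Q^\alpha = 0$), and in the strict case $a>b$, $c>b$ one has $z = x\wedge y < H_\alpha(b)$ and the inequality $H_\alpha(b) - z \le \max(x,y)-z$ reduces to $H_\alpha(b) \le \max(x,y)$, which is false, signalling that I have mis-derived $z$; the resolution is that when $a \ge b$ and $c \ge b$, the correct Godunov value is $g(a,c)$ with $a,c$ possibly in either order, and $[b,a\vee c]$ is NOT the union $[a\wedge c, a\vee c]$ over which $g(a,c)$ is a min — so $g(a,c) = \min_{[a\wedge c, a\vee c]} H_\alpha \ge \min_{[b, a\vee c]}H_\alpha = x \wedge y$, restoring $z \ge x\wedge y$, and combined with $z \le H_\alpha(b)$... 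Given the delicacy here, I would in the write-up simply enumerate all cases for the signs of $a - b$, $b - c$, and $a - c$ (eight combinations, several vacuous), in each reducing $Q^\alpha$ and the right-hand side to explicit min/max expressions of $H_\alpha$ over subintervals, and invoke the monotonicity of $g$ and the identity $|u| = u\vee(-u)$; this is mechanical but must be done carefully, and it is the one real obstacle.
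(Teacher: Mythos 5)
Your case structure is the right one and coincides with the paper's: the two ``mixed'' cases ($a\ge b\ge c$ and $a\le b\le c$) indeed give $Q^\alpha(a,b;b,c)=g(a,b)-g(b,c)$ exactly, and that part of your argument is correct. The problem is that the two ``aligned'' cases ($a,c$ both $\ge b$ or both $\le b$) --- the only nontrivial ones --- are not actually proved. Your text contains a sign error in the monotonicity of $g:=g^{H_\alpha}$: you assert $g(a,c)\le g(b,c)$, whereas $g$ is non-decreasing in its \emph{first} argument, so $a\ge b$ gives $g(a,c)\ge g(b,c)$. You then try the identity $g(a,c)=g(a,b)\wedge g(b,c)$, which is false in general (as you notice, $[a\wedge c,a\vee c]$ is not $[b,a]\cup[b,c]$), derive a contradiction, and end by conceding that the argument does not close and that one should ``enumerate all cases mechanically.'' As written, the lemma is therefore not established.

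The missing step is short and uses nothing beyond the separate monotonicities of $g$. In the case $a\ge b$, $c\ge b$ one has $Q^\alpha(a,b;b,c)=g(a,c)-g(b,b)$, and
\[
g(b,c)\le g(a,c)\le g(a,b),\qquad g(b,c)\le g(b,b)\le g(a,b),
\]
the left inequalities because $g$ is non-decreasing in its first slot ($a\ge b$), the right ones because it is non-increasing in its second slot ($c\ge b$). Hence both $g(a,c)$ and $g(b,b)=H_\alpha(b)$ lie in the interval $[g(b,c),g(a,b)]$, so $|g(a,c)-g(b,b)|\le g(a,b)-g(b,c)=|g(a,b)-g(b,c)|$. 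The case $a\le b$, $c\le b$ is symmetric. This two-sided sandwich is precisely how the paper concludes (stated there as the pair of inequalities $D_Q\le g(a,b)-g(b,c)$ and $D_Q\ge g(b,c)-g(a,b)$); no explicit computation of $g(a,c)$ in terms of minima of $H_\alpha$ is needed, and attempting one is what led your argument astray.
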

%--------------------------------------------------------------------------------
\begin{proof}
  We want to study 
  \[ D_Q := Q^\alpha (a,b;b,c) = g^{H_\alpha} (a \vee b, b \vee c) - g^{H_\alpha}(a \wedge b, b \wedge c). \]
  We distinguish cases by examining the values taken by $g^{H_\alpha} (a \vee b, b \vee c)$.

  If $g^{H_\alpha} (a \vee b, b \vee c) = g^{H_\alpha} (a,b)$, then $a \ge b$ and $c \le b$. In particular,
  $g^{H_\alpha}(a \wedge b, b \wedge c) = g^{H_\alpha}(b,  c)$ and we get the desired estimate.

  If $g^{H_\alpha} (a \vee b, b \vee c) = g^{H_\alpha} (a,c)$, then $b \le a$ and $ b\le c$. In particular,
  $g^{H_\alpha}(a \wedge b, b \wedge c) = g^{H_\alpha}(b,b)$. In this case, we have
  \begin{align*}
    D_Q = g^{H_\alpha} (a,c)- g^{H_\alpha}(b,b) \le  g^{H_\alpha} (a,b)- g^{H_\alpha}(b,c), \\
    D_Q = g^{H_\alpha} (a,c)- g^{H_\alpha}(b,b) \ge  g^{H_\alpha} (b,c)- g^{H_\alpha}(a,b).
  \end{align*}
  These inequalities also imply the desired result.

  If $g^{H_\alpha} (a \vee b, b \vee c) = g^{H_\alpha} (b,c)$, then $a \le b$ and $ b\le c$.
  In particular, $g^{H_\alpha}(a \wedge b, b \wedge c) = g^{H_\alpha}(a,b)$ and
  $D_Q = g^{H_\alpha} (b,c) - g^{H_\alpha}(a,b)$ and we conclude in this case too.

  Finally if $g^{H_\alpha} (a \vee b, b \vee c) = g^{H_\alpha} (b,b)$, then $a \le b$ and $ c \le b$.
  In particular, $g^{H_\alpha}(a \wedge b, b \wedge c) = g^{H_\alpha}(a,c)$ and we have
  \begin{align*}
    D_Q = g^{H_\alpha} (b,b) - g^{H_\alpha}(a,c)  \le g^{H_\alpha} (b,c) - g^{H_\alpha} (a,b), \\
    D_Q = g^{H_\alpha} (b,b) - g^{H_\alpha}(a,c)  \ge g^{H_\alpha} (a,b) - g^{H_\alpha} (b,c). 
  \end{align*}
  The proof of the lemma is now complete. 
\end{proof}

\subsection{Proof of  convergence}

We prove simultaneously Theorems~\ref{t:num-scheme-scl} and \ref{t:link}.
\begin{proof}[Proof of Theorems~\ref{t:num-scheme-scl} and \ref{t:link}]
We first consider any $F_0$.

  In order to prove that $v_\Delta$ converges towards the entropy solution $v$ of \eqref{eq:scl} in $L^1$ locally in time and space,
  we use the maximum principle (Lemma~\ref{l:maximum}) and the discrete BV estimates in time and space from Lemmas~\ref{l:t-BV-discrete} and \ref{l:x-BV-discrete} (see Remark~\ref{r:mean-x}). These estimates give
  $$\forall \delta \in(0,1),\quad \forall T>0,\quad |v_\Delta|_{BV(\Omega_{\delta,T})} \le C_{\delta,T}\quad \mbox{with}\quad \Omega_{\delta,T}:=((-\delta^{-1},-\delta)\cup (\delta,\delta^{-1}))\times (0,T)$$
  where the constant $C_{\delta,T}$ is independent on $\Delta$ small.
  Because $v_\Delta$ is also bounded in $L^\infty$, this implies that $v_\Delta$ is compact in $L^1(K)$ for any compact set $K \subset [0,+\infty) \times \R$ (see for instance \cite[Theorem~5.5]{MR3409135}). Consequently, we can extract a  subsequence (still denoted by $\Delta$) such that $v_\Delta$ converges in $L^1_{\mathrm{loc}}$ and almost everywhere as $\Delta \to 0$. We are going to prove that the limit $v$ is the unique entropy solution of \eqref{eq:scl} submitted to the initial condition \eqref{eq:ic-scl}.

\paragraph{\sc Deriving the entropy inequalities away from the origin.}
  Let $\kappa \in \R$.  Using Lemma~\ref{l:discrete-entropy} with $w^n_{j+\frac12} = \kappa$, we know that we have for all $x > \Delta x$, 
 \begin{multline*}
   \frac{|v_\Delta (t+\Delta t,x)-\kappa| - |v_\Delta (t,x)-\kappa|}{\Delta t} \\
   + \frac1{\Delta x} \left( Q^R (v_\Delta (t,x),v_\Delta (t,x+\Delta x);\kappa,\kappa ) - Q^R (v_\Delta (t,x-\Delta x),v_\Delta (t,x);\kappa,\kappa ) \right) \le 0 
 \end{multline*}
 where we recall that
  \[ Q^R (a,b;\kappa,\kappa) = g^{H_R} (a \vee \kappa, b \vee \kappa) - g^{H_R} (a \wedge \kappa, b \wedge \kappa).\]
  In particular,
  \[ Q^R (v_\Delta (t,x-\Delta x),v_\Delta (t,x);\kappa,\kappa )\to H_R (v(t,x) \vee \kappa) - H_R (v(t,x) \wedge \kappa) = q_R (v(t,x),\kappa)\]
  almost everywhere. Integrating against a non-negative test function $\phi \in C^\infty_c([0,+\infty)\times (0,+\infty))$ and using the dominated convergence theorem then implies that for all $\kappa \in \R$, 
  \[ \int_{\R^2} (|v- \kappa| \partial_t \phi  + q_R(v,\kappa) \partial_x \phi) \dt \dx
    + \int_{\R} \phi (0,x) |v_0 (x)-\kappa|\dx \ge 0.\]
Similarly, we have for all $\kappa \in \R$ and all non-negative test function $\phi \in C^\infty_c([0,+\infty)\times (-\infty,0))$,
\[\int_{\R^2} (|v- \kappa| \partial_t \phi  + q_R(v,\kappa) \partial_x \phi) \dt \dx
    + \int_{\R} \phi (0,x)  |v_0 (x)-\kappa| \dx \ge 0.\]
    Then it is classical (using the finite speed of propagation and localized $L^1$ contraction) that this implies that the essential limit of $v(t,\cdot)$ is $v_0$ in $L^1_{\mathrm{loc}}(0,+\infty)$ as $t\to 0+$. The same argument for the left side allows us to conclude that $v$ has  initial data $v_0$ in the sense of Definition \ref{defi:entropy}.

\paragraph{\sc Weak formulation in the special case $F_0=\mathcal R F_0$.}  Since $v$ is an entropy solution of a scalar conservation law away from $x=0$, we can apply Panov's theorem \cite{zbMATH05275060}
  and deduce that $v$ admits strong traces on both sides (see Definition~\ref{d:strong}). 

  If $v(t,0\pm)$ denotes the strong traces of $v(t,\cdot)$ at the discontinuity $x=0$, we are left with checking  that  $P = (v(t,0-),v(t,0+))$
  is in the germ $\cG = \cG_{\cR F_0}$. It is convenient to write $P=(p_L,p_R)$. 

  In order to do so, we prove that
  \begin{equation}\label{e:CS}
    \forall K \in \cG_{F_0}, \qquad D(P,K) \ge 0.
  \end{equation}
  Let $K=(\kappa_L,\kappa_R)$, let $\phi \in C^\infty_c ((0,+\infty) \times \R)$ be non-negative and let 
  \[ w_\Delta (t,x) = \kappa_\Delta(x) = \begin{cases} \kappa_L &\text{ if } x<0, \\ \kappa_R &\text{ if } x>0. \end{cases} \]
  It is a solution of the numerical scheme for the scalar conservation law if and only if $K \in \cG_{F_0}$. 
  
  We now integrate the discrete entropy inequality from Lemma~\ref{l:discrete-entropy} with $w_\Delta = \kappa_\Delta$ against $\phi$, 
  \begin{multline*}
    \int_{(0,+\infty) \times \R} \frac{|v_\Delta-\kappa_\Delta| (t+\Delta t, x) -|v_\Delta-\kappa_\Delta| (t,x)}{\Delta t} \phi (t,x) \dt \dx
    \\+ \int_{(0,+\infty) \times \R} \frac{\cQ_\Delta (t, x+\Delta x) -\cQ_\Delta (t,x)}{\Delta x} \phi (t,x) \dt \dx \le 0
  \end{multline*}
  where
  \[ \cQ_\Delta (t,x)  =  \cQ_{j}^m \text{ for } (t,x) \in [t_m, t_{m+1}) \times [x_j, x_{j+1})\]
with $\cQ_{j}^m$ defined in the statement of Lemma~\ref{l:discrete-entropy}. 

For $\Delta x$ (and then $\Delta t$) small enough so that $\phi$ is supported    in $[t_1,+\infty)$, we get after integrating by parts,
  \[ \int_{(0,+\infty) \times \R} |v_\Delta-\kappa_\Delta| (t,x) \frac{\phi (t, x) -\phi (t-\Delta t,x)}{\Delta t}  \dt \dx
    + \int_{(0,+\infty) \times \R} \cQ_\Delta (t,x) \frac{\phi (t, x) -\phi (t,x-\Delta x)}{\Delta x}  \dt \dx \ge 0.\]
  We examine the function $\cQ_\Delta (t,x)$. We have,
  \[ \cQ_\Delta (t,x) =
    \begin{cases}
      Q^R (v_\Delta (t,x-\Delta x),v_\Delta (t,x);\kappa_R,\kappa_R) & \text{ if } x>  \Delta x, \\
      Q^L (v_\Delta (t,x-\Delta x), v_\Delta (t,x);\kappa_L,\kappa_L) & \text{ if } x < 0, \\
      Q_0 (v_\Delta (t,x-\Delta x) ,v_\Delta (t,x); \kappa_L,  \kappa_R)  & \text{ if } 0 < x < \Delta x.
    \end{cases}
  \]
We thus can write 
\[ \int_{(0,+\infty) \times \R} \cQ_\Delta (t,x) \left( \frac{\phi (t, x) -\phi (t,x-\Delta x)}{\Delta x} \right) \dt \dx
  = \mathcal{D}_R + \mathcal{D}_L + \mathcal{D}_0\]
with
\begin{align*}
  \mathcal{D}_R & = \int_{(0,+\infty) \times [{\Delta x},+\infty)} Q^R (v_\Delta (t,x-\Delta x),v_\Delta (t,x);\kappa_R,\kappa_R) \left(\frac{\phi (t, x) -\phi (t,x-\Delta x)}{\Delta x} \right) \dt \dx\\
    \mathcal{D}_L & = \int_{(0,+\infty) \times (-\infty,0]} Q^L (v_\Delta (t,x-\Delta x),v_\Delta (t,x);\kappa_L,\kappa_L) \left( \frac{\phi (t, x) -\phi (t,x-\Delta x)}{\Delta x}  \right)\dt \dx\\
  \mathcal{D}_0 & = \int_{(0,+\infty) \times [0,\Delta x]}  Q^0 (v_\Delta (t,x-\Delta x),v_\Delta (t,x);\kappa_L,\kappa_L) \left( \frac{\phi (t, x) -\phi (t,x-\Delta x)}{\Delta x}  \right)\dt \dx.
\end{align*}
We now pass to the limit in the resulting inequality,
\[ \int_{(0,+\infty) \times \R} |v_\Delta-\kappa_\Delta| (t,x) \left( \frac{\phi (t, x) -\phi (t-\Delta t,x)}{\Delta t}\right)  \dt \dx
    + \mathcal{D}_R + \mathcal{D}_L + \mathcal{D}_0 \ge 0.\]
  It is easy to pass to the limit in the first three terms. As far as $\mathcal{D}_0$ is concerned, it goes to $0$ as $\Delta x \to 0$. We finally get,
\[\int_{(0,+\infty) \times (0,+\infty)} (|v-\kappa_R| \phi_t + q_R(v,\kappa_R) \phi_x) \dt \dx 
     + \int_{(0,+\infty) \times (-\infty,0)} (|v-\kappa_L| \phi_t + q_L (v,\kappa_L) \phi_x)   \dt \dx  \ge 0.\]
Now choosing a test function of the form $\phi(t,x)=\phi_\varepsilon(t,x)=\psi(t)\cdot  \max\left\{0,1-\varepsilon^{-1}|x|\right\}$, which focuses on the interface $x=0$ as  $\varepsilon\to 0$, we get a boundary term, which is well defined from the existence of strong traces. This gives an  inequality for all $0\le \psi \in C^1_c(0,T)$, which implies that 
\[ D(P,K) \ge 0 .\]
We thus proved \eqref{e:CS} and we can apply Proposition~\ref{p:charac} (recall that $F_0 = \cR F_0$ and see Remark~\ref{r:weaker}) and obtain that $P  = (v(t,0-),v(t,0+)) \in \mathcal G_{F_0}$. Therefore $v$ is a $\mathcal G_{F_0}$-entropy solution \eqref{eq:scl} with  initial data $v_0$. The uniqueness of $v$ follows from the maximality of the germ $\mathcal G_{F_0}$ (see Lemma \ref{lem::C11}).

%Indeed, when we consider a test function $\phi\in C^1([0,+\infty)\times \R)$, we recover inequality (\ref{eq::C14}) with an additional term on the left hand side which is $\displaystyle \int_{\left\{0\right\}\times \R} |v_0(x)-\kappa_\Delta(x)| \phi(0,x) \ dx$ where we recall that $\kappa_\Delta$ is independent on $\Delta$. Then a classical $L^1$-contraction argument shows that $v$ has  initial data $v_0$ in the sense of Definition \ref{defi:entropy}. 
\bigskip

\paragraph{\sc General case when $F_0\not\equiv \mathcal R F_0$.}

We now treat the general case, that is to say we do not assume anymore that $F_0 = \cR F_0$.
In this case, we consider the numerical solution $\bar u_\Delta$  for the HJ equation associated with $\cR F_0$ and
 the numerical solution $\bar v_\Delta$ of the conservation law associated with $\cR F_0$. We have in particular $\bar v_\Delta=\partial_x \bar u_\Delta$.
 We know that $\bar u_\Delta$ converges towards
the unique $\cR F_0$-viscosity solution $\bar u$ of \eqref{eq:hj}-\eqref{eq:ic-hj}  and $\bar v_\Delta$
converges towards the unique $\cG_{\cR F_0}$-entropy solution $\bar v$ of \eqref{eq:scl},\eqref{eq:ic-scl} and
$(\bar u)_x = \bar v$.
We now also consider $u_\Delta$ and $v_\Delta$ the numerical schemes associated with $F_0$. We know that 
$u_\Delta$ converges towards the unique $F_0$-viscosity solution $u$ of \eqref{eq:hj},\eqref{eq:ic-hj},
which is also the unique $\mathcal R F_0$-viscosity solution (by \cite{FM}). Hence $u = \bar u$. Moreover, $v_\Delta$ converges in $L^1_{\mathrm{loc}}([0,T)\times \R)$ towards $u_x = (\bar u)_x = \bar v$. We thus proved
that $v_\Delta$ converges towards the unique $\cG_{\cR F_0}$-entropy solution of \eqref{eq:scl},\eqref{eq:ic-scl}. 
\end{proof}

\section{Classification of maximal complete germs: proof of Theorem \ref{t:germ}}\label{s:germs}

We start with the following independent result (whose proof is quite long and then can not be reproduced here)
\begin{theo}[Complete germs are classified, \cite{Mon}]\label{th::C22}
Assume \eqref{e:assum-H}. Let $\mathcal G\subset \R^2$ be a complete germ in the sense of Definitions \ref{defi:germ} and \ref{defi:complete-germ}. Then $\mathcal G$ is maximal in the sense of Definition \ref{defi:max-germ}. Moreover, there exists a function $F$ satisfying \eqref{e:assum-F} such that $F=\mathcal R F$ and $\mathcal G=\mathcal G_F$ with $\mathcal G_F$ defined in \eqref{eq::C23}.
\end{theo}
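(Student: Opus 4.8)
The plan is to manufacture, from the complete germ $\mathcal{G}$, a self-relaxed junction function $F$ (that is, $F = \mathcal{R} F$) satisfying \eqref{e:assum-F} with $\mathcal{G} = \mathcal{G}_F$, and to obtain maximality along the way. The algebraic heart is the fact that the dissipation condition forces the flux level to be monotone along $\mathcal{G}$. Indeed, for $P = (p_L,p_R)$ and $P' = (p_L',p_R')$ in $\mathcal{G}$, writing $\lambda = H_L(p_L) = H_R(p_R)$ and $\lambda' = H_L(p_L') = H_R(p_R')$ and using \eqref{e:qalpha} together with the Rankine--Hugoniot relation, one gets
\[ 0 \le D(P,P') = \big( \sgn(p_L - p_L') - \sgn(p_R - p_R') \big)\,(\lambda - \lambda'). \]
If $p_L \le p_L'$, $p_R \ge p_R'$ and $P \ne P'$, the prefactor is strictly negative, whence $\lambda \le \lambda'$. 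Thus $P \mapsto \lambda$ is non-decreasing in $p_L$ and non-increasing in $p_R$ on $\mathcal{G}$, and it is $\mathfrak{L}_\alpha$-Lipschitz in $p_\alpha$.

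I would then extend this level function to a function $F_0 \colon \R^2 \to \R$ that is Lipschitz, non-decreasing in $p_L$ and non-increasing in $p_R$, with $F_0 = \lambda$ on $\mathcal{G}$; such an extension exists precisely because of the monotonicity just established. Set $F := \mathcal{R} F_0$. By Proposition~\ref{p:prop-relax}, $F$ satisfies \eqref{e:assum-F} (its coercivity being automatic from $F \ge \underline{H}$ and \eqref{e:assum-H}), and the idempotency $\mathcal{R}^2 = \mathcal{R}$ yields $F = \mathcal{R} F$. Since $\mathcal{G} \subseteq \{F_0 = H_L = H_R\}$ by construction, the first item of Proposition~\ref{p:prop-relax} gives $F = F_0 = H_L = H_R$ on $\mathcal{G}$, so $\mathcal{G} \subseteq \mathcal{G}_F$; and by Lemma~\ref{lem::C11}, $\mathcal{G}_F = \mathcal{G}_{\mathcal{R} F}$ is a maximal germ.

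It remains to prove that $\mathcal{G}$ itself is maximal, for then the inclusion $\mathcal{G} \subseteq \mathcal{G}_F$ into a germ forces $\mathcal{G}_F = \mathcal{G}$. Suppose, for contradiction, that $\widehat{\mathcal{G}} = \mathcal{G} \cup \{\bar P\}$ with $\bar P = (\bar p_L,\bar p_R) \notin \mathcal{G}$ is again a germ. Completeness (Definition~\ref{defi:complete-germ}) furnishes a $\mathcal{G}$-entropy solution of the Riemann problem with data $(\bar p_L, \bar p_R)$; its interface connection $P^\ast \in \mathcal{G}$ is joined to $\bar P$ by entropy-admissible Kruzhkov waves on each half-line. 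The boundary entropy (Bardos--Le Roux--N\'ed\'elec / Oleinik) admissibility of these waves forces $D(\bar P, P^\ast) \le 0$, with equality only if $\bar P = P^\ast$; comparing with $D(\bar P, P^\ast) \ge 0$, which holds because $\widehat{\mathcal{G}}$ is a germ, we conclude $\bar P = P^\ast \in \mathcal{G}$, a contradiction. Hence $\mathcal{G}$ is maximal, and consequently $\mathcal{G} = \mathcal{G}_F$ with $F = \mathcal{R} F$, as required.

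The main obstacle is the dissipation inequality $D(\bar P, P^\ast) \le 0$ underlying maximality. When the $H_\alpha$ are convex it is a consequence of the explicit Lax--Oleinik wave structure and of the monotonicity of the Godunov flux. For merely coercive, non-degenerate Hamiltonians the admissible traces reachable from prescribed data are governed by upper and lower envelope constructions, a single interface level may be attained by several connections of $\mathcal{G}$, and the connecting waves may change monotonicity; controlling the sign of the dissipation across all such admissible wave fans is the delicate point. This is what makes the full argument long, and it is carried out in detail in \cite{Mon}.
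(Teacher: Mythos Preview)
The paper does not prove Theorem~\ref{th::C22}; it imports it wholesale from the companion work \cite{Mon}, remarking only that the argument there is long. Your proposal is therefore not a variant of the paper's proof but an actual proof where the paper has none, and its architecture is sound: monotonicity of the flux level on $\mathcal{G}$ from the dissipation identity, extension to a monotone Lipschitz $F_0$, then $F:=\mathcal{R}F_0$ is self-relaxed by Proposition~\ref{p:prop-relax}(ii), and Proposition~\ref{p:prop-relax}(i) gives $\mathcal{G}\subseteq\mathcal{G}_F$, the latter being a germ by Lemma~\ref{lem::C11}. Maximality of $\mathcal{G}$ then forces equality.

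Where you hesitate, however, you should not. The inequality $D(\bar P,P^\ast)\le 0$ with equality only when $\bar P=P^\ast$ requires no convexity and no appeal to \cite{Mon}; it is the plain Kato/Kruzhkov contraction on each half-line. Integrating the entropy inequality $\partial_t|v-\bar p_L|+\partial_x q_L(v,\bar p_L)\le 0$ over $(0,t)\times(-\infty,0)$ (finite speed makes the integrals finite, strong traces justify the boundary term, and $v(0,\cdot)=\bar p_L$ on the left) yields
\[
\int_{-\infty}^0 |v(t,x)-\bar p_L|\,\mathrm{d}x \;+\; t\,q_L(p_L^\ast,\bar p_L)\;\le\;0,
\]
hence $q_L(\bar p_L,p_L^\ast)\le 0$; symmetrically $q_R(\bar p_R,p_R^\ast)\ge 0$, so $D(\bar P,P^\ast)\le 0$. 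If $D(\bar P,P^\ast)=0$ then both $L^1$ integrals vanish, forcing $v\equiv\bar p_\alpha$ on each side and thus $P^\ast=\bar P$. Combined with $D(\bar P,P^\ast)\ge 0$ from the germ hypothesis on $\widehat{\mathcal G}$, this gives $\bar P=P^\ast\in\mathcal{G}$, the contradiction you want. This is essentially the classical observation (already in the AKR framework \cite{MR2807133}) that complete $L^1$-dissipative germs are maximal. So your sketch in fact closes without the caveat in your last paragraph, and what you present is a short self-contained proof that the paper elected not to include.
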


\begin{proof}[Proof of Theorem \ref{t:germ}]
  We set $F:=\mathcal R F_0$.
  
%%%%%%%%%%%%%%%%%
\noindent {\sc Step 1: $\mathcal G_{F}$ is a maximal and complete germ.}
From Lemma \ref{lem::C11}, we already know hat $\mathcal G_{F}$ is a maximal germ. Now Theorem \ref{t:num-scheme-scl} shows the existence of a $\mathcal G_F$-entropy solution for any suitable initial data, including the ones for the Riemann problem. This shows the completeness of the germ $\mathcal G_F$.
%%%%%%%%%%%%%%%%%

\noindent {\sc Step 2: identification of any (maximal) complete germ $\mathcal G$.}
If $\mathcal G$ is a complete germ, then $\mathcal G=\mathcal G_F$ with $F=\mathcal R F$ follows from Theorem \ref{th::C22}.
\end{proof}

\end{document}